\documentclass[reqno,10pt,a4paper]{amsart}

\usepackage[margin=0.8in]{geometry}
\usepackage{parskip}
\usepackage{amsmath}
\usepackage{lmodern}
\usepackage{amsfonts}
\usepackage{amssymb}
\usepackage{amsthm}
\usepackage{xspace}
\usepackage{array}
\usepackage{url}
\usepackage[bookmarks=false]{hyperref}
\usepackage[dvipsnames]{xcolor}
\usepackage{mathrsfs}
\usepackage{mathtools}
\usepackage{scalerel}
\usepackage{enumitem}
\usepackage{titlesec}
\usepackage{titletoc}
\titleformat{\section}
  {\normalfont\fontsize{12}{15}\bfseries}{\thesection.}{1em}{}
\titleformat{\subsection}
  {\normalfont\fontsize{12}{15}\itshape}{\thesubsection.}{1em}{}
\titleformat{\subsubsection}
  {\normalfont\fontsize{12}{15}\itshape}{\thesubsubsection.}{1em}{}

\hypersetup
    {
        colorlinks    = true,
        linkcolor     = blue,     
        urlcolor      = SeaGreen,
    	citecolor     = JungleGreen,
    	bookmarksopen = false,
    }
\theoremstyle{plain}
\newtheorem{theorem}{Theorem}
\newtheorem{lemma}[theorem]{Lemma}
\newtheorem{proposition}[theorem]{Proposition}
\newtheorem{corollary}[theorem]{Corollary}

\theoremstyle{remark}

\numberwithin{equation}{section}
\numberwithin{theorem}{section}

\newcommand{\ie}{i.e.\@\xspace}

\newcommand{\R}{\mathbb{R}}
\newcommand{\N}{\mathbb{N}}

\newcommand{\tr}{\operatorname{tr}}
\renewcommand{\S}{\mathbb{S}}

\DeclareMathOperator{\vol}{\mathrm{vol}}
\renewcommand{\div}{\operatorname{div}}

\DeclareMathOperator{\hess}{Hess}
\DeclareMathOperator{\tracelessHess}{\mathring{Hess}}

\renewcommand{\sec}{\operatorname{sec}}
\DeclareMathOperator{\scal}{scal}
\DeclareMathOperator{\ric}{Ric}

\DeclareMathOperator{\sn}{sn}
\newcommand{\snk}{\sn_k}

\DeclareMathOperator{\arcsn}{arcsn}
\newcommand{\asnk}{\arcsn_k}

\DeclareMathOperator{\cs}{cs}
\newcommand{\csk}{\cs_k}

\DeclareMathOperator{\tn}{tn}
\newcommand{\tnk}{\tn_k}

\DeclareMathOperator{\arctn}{arctn}
\newcommand{\atnk}{\arctn_k}

\newcommand{\kn}{\mathbin{\bigcirc\mspace{-15mu}\wedge\mspace{3mu}}}

\newcommand*{\dd}{\mathop{}\!\mathrm{d}}

\newcommand{\dist}{\mathrm{dist}}

\titleformat{\section}{\normalfont\fontsize{12}{15}\bfseries}{\thesection.}{1em}{}

\titleformat{\subsection}[runin]{\bfseries}{\thesubsection.}{0.5em}{}
\titlespacing*{\subsection}{0pt}{2ex}{1em}

\titleformat{\chapter}[display]
{\Large}
{\filright\Large\thechapter}
{0ex}
{\titlerule[1pt]\filright}

\titlecontents{section} % set formatting for \section -
[1.5em]                 % adjust left margin
{\large\bfseries}  % font formatting
{\contentslabel{1.5em}} % section label and offset
{\hspace*{0em}}
{\normalfont\titlerule*[0.7pc]{.}\contentspage}

\titlecontents{subsection} % set formatting for \subsection -
[3.5em]                    % adjust left margin
{\large}              % font formatting
{\contentslabel{2em}}      % section label and offset
{\hspace*{0.3em}}
{\titlerule*[0.7pc]{.}\contentspage}

\titlecontents{subsubsection} % set formatting for \subsubsection -
[6.5em]                       % adjust left margin
{\large}                 % font formatting
{\contentslabel{3em}}         % section label and offset
{\hspace*{0.3em}}
{\titlerule*[0.7pc]{.}\contentspage}

\begin{document}

\title[Sharp Hessian inequalities and monotonicity on positively curved Einstein manifolds]{Sharp Hessian inequalities and monotonicity for Green's functions of positively curved Einstein manifolds}

\author[Cosmin Manea \and Jacob Reznikov]{Cosmin Manea \and Jacob Reznikov}
\address{Department of Mathematics, Massachusetts Institute of Technology, 77 Massachusetts Avenue, Cambridge, MA, USA}
\email{cosmanea@mit.edu \and jacob\_oc@mit.edu}

\begin{abstract}
    We prove a sharp Hessian inequality for the natural comparison Green's function of an Einstein manifold with positive sectional curvature, which may be viewed as a Green's function analogue of the Hessian comparison theorem and as a positively-curved, elliptic counterpart to the matrix Li-Yau-Hamilton inequality for the heat equation.
    We also show that this inequality is closely related to a family of monotonicity formulae, extending previous results of Park to the setting of positive curvature.
    We further present several geometric applications, such as a sharp comparison inequality for the sum of two distinct Green's distance functions.
\end{abstract}

\maketitle

\section{Introduction} \label{sec: intro}

We prove a sharp Hessian inequality for the natural comparison Green's function of an Einstein manifold with positive sectional curvature, thus providing a sharp, positively-curved extension of {\cite{ParkHessianEstimate}} and a sharp, elliptic analogue of the matrix Li-Yau-Hamilton inequality for the heat equation (cf.~{\cite{HamiltonLiYau}}).
Our Green's function is the fundamental solution for a Schr\"{o}dinger operator with constant, curvature-dependent potential, which acts as a comparison operator for the conformal Laplacian.
We also present a two-parameter family of sharp monotonic quantities closely related to this Hessian inequality (extending those from {\cite{ParkHessianMonotonicity}}), and show geometric applications of our results, such as a comparison inequality for the sum of two distinct regularized Green's distance functions.
We further show that the analogous sharp estimate may generally fail for non-parabolic manifolds with non-negative sectional curvature (which is a setting similar to that from {\cite{ParkHessianEstimate}}).

\subsection{The sharp Hessian inequality.} \label{intro-subsec: sharp hessian inequality}
For Einstein manifolds with positive sectional curvature, our sharp Hessian inequality takes the following form.

\begin{theorem} \label{intro-th: sharp hessian estimate}
Let $(M, g)$ be a closed, connected, Einstein manifold of dimension $n \geq 3$, with sectional curvature bounded below by $k > 0$, and let $G$ be Green's function{\footnote{Our convention is that, if $L$ is a second-order differential operator on a manifold of dimension $n \geq 3$, then every fundamental solution $G$ for $L$ with singularity at $p$ is normalized to solve the equation $LG = (n - 2) \omega_{n - 1} \delta_p$, where $\omega_{n - 1}$ is the volume of the unit sphere in $\R^n$ and $\delta_p$ is the Dirac distribution at $p$.}} for the operator $-\Delta + n(n - 2)k/4$, with singularity at a fixed point of $M$.

Then, for $s := G^{\frac{1}{2 - n}}$, away from the singularity of $G$, we have that
\begin{equation} \label{intro-eq: sharp hessian estimate}
    \hess{s^2} + ks^2g \leq 2g
\end{equation}
in the sense of quadratic forms.
Moreover, equality holds in \eqref{intro-eq: sharp hessian estimate} at \emph{some} non-zero vector if and only if $(M, g)$ is isometric to the round $n$-sphere of constant curvature equal to $k$.
\end{theorem}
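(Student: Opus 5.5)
The plan is to reduce everything to a maximum-principle argument for the symmetric $2$-tensor
\[
Q := \hess u + k u\, g - 2g, \qquad u := s^2 = G^{\frac{2}{2-n}},
\]
and to show $Q \le 0$.

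First, the model case. On the round sphere of curvature $k$, $G$ is (up to the normalization) the chordal distance to the power $2-n$. Hence
\[
s = \tfrac{2}{\sqrt k}\sin\bigl(\tfrac{\sqrt k\, r}{2}\bigr), \qquad u = \tfrac{2}{k}\bigl(1-\cos(\sqrt k\, r)\bigr),
\]
and one checks directly that $\hess u = 2\cos(\sqrt k\, r)\, g$. This gives $Q \equiv 0$, so equality holds everywhere in the model.

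Second, rewrite the equation for $G$ in terms of $u$. Since the operator is positive, $G>0$ on $M\setminus\{p\}$, so $u$ is smooth and positive there. The equation $\Delta G = \tfrac{n(n-2)k}{4}G$ becomes
\[
\Delta u = \frac{n}{2}\frac{|\nabla u|^2}{u} - \frac{nk}{2}u .
\]
Setting $F := \dfrac{|\nabla u|^2 + k u^2}{4u}$ (so $F\equiv 1$ on the sphere), this gives
\[
\tr Q = 2n(F-1).
\]
Moreover, the local expansion of $G$ near $p$ gives $u = r^2 + o(r^2)$ with $\hess u \to 2g$. Therefore $Q\to 0$ and $F\to 1$ at $p$.

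As a first step I would prove the gradient bound $F\le 1$, which is the trace of the claim. I would compute $\Delta F - c\,\langle \nabla \log u, \nabla F\rangle$ for a suitable constant $c$, using Bochner's formula with $\ric = (n-1)k'g$ for some $k'\ge k$ and the refined Kato or Cauchy–Schwarz inequality $|\hess u|^2\ge (\Delta u)^2/n$. The aim is an inequality of the form $LF \ge (\text{nonnegative})\cdot(F-1)$ wherever $F>1$. Since $F\to1$ at the pole and $M$ is closed, the maximum principle then gives $F\le1$. Equivalently, I would try to realize this as monotonicity of an integral quantity over level sets of $u$.

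The main step is to upgrade this trace bound to the full tensor inequality. I would differentiate the $u$-equation twice and commute covariant derivatives. Because $(M,g)$ is Einstein, $\nabla\ric=0$, so the commutators produce only the Riemann tensor acting on $Q$, through the Lichnerowicz Laplacian. The target is an identity
\[
\Delta Q - c\,\nabla_{\nabla\log u}Q = \mathcal R(Q) + \mathcal B(Q,\nabla Q) + (\text{terms in } F-1),
\]
where $\mathcal R$ involves $\mathrm{Rm} - k\,g\kn g/2$, and $\mathcal B$ has a favourable sign at a maximum of the top eigenvalue. I would then apply the tensor maximum principle (Hamilton's) to the largest eigenvalue $\lambda$ of $Q$, possibly multiplied by a power of $u$ to absorb first-order terms, using: $\lambda\to0$ at $p$; $F\le1$ from the previous step; and $\sec\ge k$, which should make $\mathcal R(Q)$ nonpositive in the null direction of $Q$ at a maximum of $\lambda$.

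I expect this step to be the main obstacle. It requires finding the right weight and a correct ordering of the terms so that the sectional curvature bound (not only the Ricci bound) enters with the right sign. The counterexample claimed in the paper for the non-negatively curved setting suggests that the Einstein condition is used essentially, to kill the $\nabla\ric$ terms.

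For rigidity, suppose $Q(v,v)=0$ for some nonzero $v$ at some point. Then $\lambda$ attains its maximum $0$ at an interior point, and the strong maximum principle gives $\lambda\equiv0$. Feeding this back into the identity forces the curvature term to vanish, so $\sec = k$ on the relevant planes, and also forces $F\equiv 1$. Then $\tr Q = 0$ together with $\lambda = 0$ yields $Q\equiv 0$, that is, $\hess u = (2-ku)g$. Thus $w := 1 - ku/2$ satisfies $\hess w = -k w\, g$. By Obata's theorem (or directly, since $u$ is a radial function with $|\nabla u|^2 = 4u - ku^2$), $(M,g)$ is isometric to the round sphere of curvature $k$. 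The converse is the model computation above.
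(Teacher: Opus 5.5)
Your preliminary steps agree with the paper: the model computation, $\tr Q = 2n(F-1)$, $Q\to 0$ at $p$, and the gradient bound $F\le 1$ (which the paper quotes from the first author's earlier work rather than reproving). The main step, however, has a genuine gap. Hamilton's tensor maximum principle cannot close the argument, because the reaction term has no sign. Write $\mathcal{H} := Q + 2g$, let $\lambda$ be its top eigenvalue, and let $e_1$ be a unit top eigenvector at a maximum point $q$. Hamilton's principle only yields $(\nabla\mathcal{H})(e_1,e_1)=0$ and $(\Delta\mathcal{H})(e_1,e_1)\le 0$ there, so you would need the zero-order terms in direction $e_1$ to be positive when $\lambda>2$. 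Carrying out exactly the computation you describe (Lichnerowicz commutation with $\nabla\ric=0$, plus the Riccati equation for $\hess s$) gives
\[
(\Delta\mathcal{H})(e_1,e_1)-\frac{2n}{s}(\nabla_{\nabla s}\mathcal{H})(e_1,e_1)=\mathcal{C}+\frac{n}{s^2}\bigl(\lambda-8\langle\nabla s,e_1\rangle^2\bigr)\bigl(\lambda-2F\bigr).
\]
Here $\mathcal{C}\ge 0$ collects the curvature terms; this is where $\sec\ge k$ enters, e.g.\ via $\sum_{j}(\sec(e_1,e_j)-k)(\lambda-\lambda_j)\ge 0$. The last term is negative whenever $\langle\nabla s,e_1\rangle^2>\lambda/8$, and nothing excludes this. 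The drift constant is irrelevant, since $(\nabla\mathcal{H})(e_1,e_1)=0$. Multiplying $\lambda-2$ by a power of $u$ only adds terms proportional to $\lambda-2$ at a critical point. But for $\lambda$ near $2$ the bad term is $\frac{4n}{s^2}(1-4\langle\nabla s,e_1\rangle^2)(1-F)$, which is not of that form. So no weight or reordering makes the reaction term sign-definite; the obstacle you flag is exactly where a new idea is needed.

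The missing idea is to keep the second-order term that Hamilton's principle throws away, namely the convexity of the top eigenvalue. The paper uses the barrier $h=\mathcal{H}(V,V)$ with $V|_q=e_1$ and $\nabla_{e_i}V=\sum_{j>m}\frac{(\nabla_{e_i}\mathcal{H})(e_j,e_1)}{\lambda-\lambda_j}e_j$, where $m$ is the multiplicity of $\lambda$. This adds $2\sum_{i}\sum_{j>m}\frac{((\nabla_{e_i}\mathcal{H})(e_j,e_1))^2}{\lambda-\lambda_j}$ to the right-hand side above. Keeping only $i=j$, applying Cauchy--Schwarz with $\sum_{j>m}(\lambda-\lambda_j)=n(\lambda-2F)$, and using that $\nabla\mathcal{H}$ vanishes on pairs of top eigenvectors, this is at least $\frac{2(\div\mathcal{H}(e_1))^2}{n(\lambda-2F)}$. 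Here the Einstein condition is used a second time, not merely to kill $\nabla\ric$. With $\ric=\rho g$ one has $\div\mathcal{H}=d\Delta s^2+(\rho+k)\,ds^2$, hence
\[
\div\mathcal{H}(e_1)=2s\langle\nabla s,e_1\rangle\bigl(\tfrac{n}{s^2}(\lambda-2F)+\rho-(n-1)k\bigr).
\]
Squaring produces exactly $\frac{8n}{s^2}\langle\nabla s,e_1\rangle^2(\lambda-2F)$ plus nonnegative terms. This cancels the bad part and leaves $\frac{n\lambda}{s^2}(\lambda-2F)>0$.

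Your rigidity step has the same defect: there is no linear differential inequality for $\lambda$ to feed a strong maximum principle. The paper instead assumes $M$ is not round and gets $F<1$ everywhere from the rigidity of the gradient estimate. It then reruns the barrier computation at a point where $\lambda=2$; since $\lambda-2F>0$ there, $\Delta h(q)>0$ contradicts maximality. Your Obata argument would be fine once $Q\equiv 0$ is known, but you cannot reach that without this ingredient.
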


To explain Theorem \ref{intro-th: sharp hessian estimate}, let us start by considering the case of the round $n$-sphere of constant curvature $k$.
If $\Delta$ is the non-positive Laplacian, the operator $-\Delta + n(n - 2)k/4$  is the conformal Laplacian, and its Green's function may be explicitly computed as $G = (2 \snk(\dist(p, \cdot)/2))^{2 - n}$, where $\snk$ is the curvature-adapted sine function and $p$ is the singularity of $G$ (cf.~{\cite[Lemma 2.1]{ManeaSGE}}).
In particular, $s = 2 \snk (\dist(p, \cdot)/2)$, and one may then check that $\hess{s^2} + k s^2 g = 2 g$.

On a general closed, Einstein manifold $(M, g)$ of dimension $n \geq 3$ with sectional curvature bounded below by some $k > 0$, the operator $-\Delta + n(n - 2)k/4$ acts as a comparison operator for the conformal Laplacian from the point of view of the model space of constant curvature $k$, and it is equal to it when the curvature is constant.
Under only the weaker assumption that $\ric \geq (n - 1)kg$, the first author previously proved (cf.~{\cite{ManeaSGE}}) a sharp gradient estimate for its Green's function, namely the inequality
\begin{equation} \label{intro-eq: sharp gradient estimate}
    \|\nabla s\|^2 + \frac{k}{4} s^2 \leq 1
\end{equation}
with equality holding at \emph{some} point only when $(M, g)$ is isometric to the round $n$-sphere of curvature $k$.
This extended previous results of Colding (cf.~{\cite{ColdingNewMonotonicityFormulas}}) from the setting of non-parabolic manifolds with non-negative Ricci curvature to the positively-curved setting, and \eqref{intro-eq: sharp gradient estimate} recovers Colding's results in the limit as $k \downarrow 0$.
Various monotonic quantities of Colding (cf.~{\cite{ColdingNewMonotonicityFormulas}}) and Colding--Minicozzi (cf.~{\cite{ColdingMinicozziMonotonicityFormulas}}) have also been extended by the first author to the setting from \eqref{intro-eq: sharp gradient estimate}, as well as other generalizations of the level set approach of Stern {\cite{SternLevelSetScalarCurvature}} and Munteanu--Wang (cf.~{\cite{MunteanuWangComparisonPaper}}) to three-manifolds with positive scalar curvature (cf.~{\cite{ManeaPSC}}).

In this setting, $s$ may be viewed as a regularization of the curvature-adapted sine of the distance function to the singularity of $G$, which also satisfies an elliptic equation.
The gradient estimate \eqref{intro-eq: sharp gradient estimate} is equivalent to the inequality $\Delta s^2 + n k s^2 \leq 2n$, which may thus be seen as a Green's function analogue of the Laplacian comparison theorem that also has a global rigidity statement.
In this sense, Theorem \ref{intro-th: sharp hessian estimate} may also be understood as a Green's function analogue of the Hessian comparison theorem in positive sectional curvature, which is also globally rigid; moreover, tracing \eqref{intro-eq: sharp hessian estimate} immediately gives us \eqref{intro-eq: sharp gradient estimate}.
At the same time, this Hessian bound is also a positively-curved, elliptic counterpart to Hamilton's matrix Li-Yau inequality for positive solutions to the heat equation (cf.~{\cite{HamiltonLiYau}}) and a sharp, positively-curved extension of {\cite[Theorem 1.1]{ParkHessianEstimate}}.

This latter point of view allows us to explain the additional requirement of having an Einstein metric as the hypothesis of Theorem \ref{intro-th: sharp hessian estimate}.
Both Hamilton's and Park's results assume non-negative sectional curvature and parallel Ricci curvature; this last assumption is needed to simplify the action of the Lichnerowicz Laplacian on symmetric, covariant two-tensors.
In the spirit of {\cite{HamiltonLiYau}} and {\cite{ParkHessianEstimate}}, the setting of Theorem \ref{intro-th: sharp hessian estimate} has the same assumption of parallel Ricci curvature, but the de Rham decomposition theorem implies that this is equivalent to the manifold being Einstein.
In this sense, we do not expect Theorem \ref{intro-th: sharp hessian estimate} to hold without this additional assumption.

Whilst the Einstein condition in Theorem \ref{intro-th: sharp hessian estimate} is rather restrictive and essential from the analytic point of view of the proof of such results, there are many examples of Einstein manifolds with positive sectional curvature.
Besides the rank one locally symmetric spaces of compact type (cf.~for instance {\cite[Chapter 3]{BesseClosedGeodesics}} for a description of their geometry), {\cite{ZillerCROSSMetrics}} provides other non-locally symmetric examples (and a complete classification) of homogeneous Einstein metrics with positive sectional curvature on high-dimensional spheres and projective spaces.
In {\cite{BergerSpace}}, Berger showed that, up to scaling, there exists a unique $\mathrm{SO}(5)$-invariant Einstein metric with positive curvature on $\mathrm{SO}(5)/\mathrm{SO}(3)$ -- which is now called the \emph{Berger space} -- where the embedding of $\mathrm{SO}(3)$ into $\mathrm{SO}(5)$ is given by its unique irreducible five-dimensional representation.
Finally, certain Aloff-Wallach spaces (cf.~{\cite{AloffWallachSpaces} for their definition}) also admit such positively curved Einstein metrics (cf.~{\cite[Theorem 3]{AloffWallachPositiveCurvatureMetrics}}).
All the aforementioned simply-connected spaces are \emph{homogeneous} manifolds; to our knowledge, there are no known examples of simply-connected, closed, non-homogeneous, Einstein manifolds with positive sectional curvature.

Before we discuss the proof of Theorem \ref{intro-th: sharp hessian estimate}, let us further contextualize the theorem.
As already mentioned, the first such inequality was Hamilton's matrix Li-Yau inequality for positive solutions to the heat equation (cf.~{\cite{HamiltonLiYau}}).
Other such matrix inequalities for flows have since been developed in other settings, such as for the heat equation on K\"{a}hler manifolds with non-negative holomorphic bisectional curvature (cf.~{\cite{CaoNiMatrixHeatKaehler}}), for the Ricci flow (cf.~{\cite{HamiltonHarnackRicciFlow}}) and the K\"{a}hler-Ricci flow (cf.~{\cite{NiMatrixKaehlerRicci}}), and for the mean curvature flow (cf.~{\cite{HamiltonMCF}}); we also refer the reader to Ni's survey from {\cite{NiSurvey}} for other such results and their applications.

In the elliptic setting, {\cite{ParkHessianEstimate}} shows, in particular, that for a complete, non-parabolic Riemannian manifold $(\widetilde{M}, \widetilde{g})$ of dimension $n \geq 3$ with non-negative sectional curvature and parallel Ricci curvature, if $\widetilde{G}$ is the minimal, positive Green's function for the Laplacian of $\widetilde{g}$ with singularity at a fixed point of $\widetilde{M}$, and if $\widetilde{s} := \widetilde{G}^{1/(2 - n)}$, then
\begin{equation} \label{intro-eq: hessian inequality for non-parabolic manifolds}
    \hess{\widetilde{s} \, {}^2} \leq C \tilde{g}
\end{equation}
away from the singularity of $\widetilde{G}$ for a constant $C \geq 10$, provided the same inequality holds at infinity on $\widetilde{M}$.
More recently, Park has also proved a similar Hessian inequality for the minimal, positive Green's function of the Laplacian of K\"{a}hler manifolds with non-negative holomorphic bisectional curvature (cf.~{\cite{ParkHessianEstimateKaehlerManifolds}}).
In this context, our result from Theorem \ref{intro-th: sharp hessian estimate} is a sharp, positively-curved analogue of \eqref{intro-eq: hessian inequality for non-parabolic manifolds}, which recovers the inequality from non-negative curvature in the formal limit as $k \downarrow 0$ (in the same manner as {\cite{ManeaSGE}} and {\cite{ManeaPSC}} did for their corresponding results), albeit losing sharpness and rigidity.

\subsection{The proof of the sharp Hessian inequality.} \label{intro-subsec: proof of sharp hessian inequality}
Let us now discuss the proof of Theorem \ref{intro-th: sharp hessian estimate}.
In this setting, if we denote by $\lambda$ the largest eigenvalue of $\mathcal{H} := \hess{s^2} + ks^2 g$, then Theorem \ref{intro-th: sharp hessian estimate} is equivalent to the statement that $\lambda \leq 2$ away from the singularity of $G$, with equality holding at \emph{some} point if and only if $(M, g)$ is isometric to the round $n$-sphere of constant curvature $k$; remark also that the asymptotics of $G$ near its singularity imply that $\lambda \to 2$ at $p$.
Our proof is similar to that of {\cite[Theorem 1.1]{ParkHessianEstimate}}: we will show that the inequality $\Delta \lambda - 2n \langle \nabla \log s, \nabla \lambda \rangle > 0$ holds in the barrier sense at every \emph{maximum} point of $\lambda$ where $\lambda$ is strictly greater than two.

At such a point $q$, if $m \geq 1$ denotes the multiplicity of $\lambda(q)$, $\lambda_1 \geq  \lambda_2 \geq \cdots \geq \lambda_n$ are the eigenvalues of $\mathcal{H}|_q$ with $\lambda(q) = \lambda_1 = \cdots = \lambda_m$, and if $(e_1, \cdots, e_n)$ is an orthonormal basis for $T_q M$ of corresponding eigenvectors of $\mathcal{H}$, the barrier for $\lambda$ at $q$ is explicitly given by the function $h := \mathcal{H}(V, V)$, where $V$ is a unit vector field with $V|_q = e_1$ and 
\begin{equation} \label{intro-eq: choice of vector field for barrier}
    \nabla_{e_i} V = \sum_{j = m + 1}^n \frac{(\nabla_{e_i} \mathcal{H})(e_j, e_1)}{\lambda(q) -\lambda_j} e_j,
\end{equation}
and we will show that $\Delta h(q) - 2n \langle \nabla \log s|_q, \nabla h|_q \rangle > 0$.

This barrier function is inspired by previous work in the setting of extrinsic geometric flows and constant rank theorems for solutions to elliptic equations (cf.~ {\cite[Lemma 5]{FlowsByPowerOfGaussCurvature}} and {\cite[Lemma 3.1]{ConstantRankTheoremsViscosity}}), where similar viscosity inequalities were shown for the Laplacian of the smallest eigenvalue of a two-tensor.
Other places where such techniques were used are, for instance, {\cite{ConstantRankTheoremNonlinearEquations}} and {\cite{GuanConvexitySolutions}}.
Moreover, by {\cite[Theorem 18.4]{AndrewsGeometricFlows}}, $\Delta h$ is precisely equal to $\Delta \lambda$ if $\lambda$ would be simple and smooth.
This barrier is thus a natural choice in the context of maximum principles for the eigenvalues of symmetric two-tensors.

\subsection{Sharp monotonic quantities associated to the Hessian inequality.} \label{intro-subsec: sharp monotonic quantities for the hessian}
We now turn to presenting various monotonic quantities associated to the sharp Hessian inequality from Theorem \ref{intro-th: sharp hessian estimate}, which will be positively-curved extensions of the results from {\cite{ParkHessianMonotonicity}}.
Monotonicity formulae play an important role in geometry and analysis because of their wide range of geometric implications.
In the setting of monotonicity for Green's functions, they were first proved by Colding (cf.~{\cite{ColdingNewMonotonicityFormulas}}) and Colding-Minicozzi (cf.~{\cite{ColdingMinicozziMonotonicityFormulas}}) for non-parabolic manifolds with non-negative Ricci curvature, and were used to prove uniqueness of tangent cones at infinity for Einstein manifolds (cf.~{\cite{CMUniquenessOfTangentCones}}).
These were recently extended by the first author to the setting of positive Ricci curvature from \eqref{intro-eq: sharp gradient estimate} in {\cite{ManeaSGE}}.

Various other monotonicity formulae have been defined and utilized in other settings, such as in the realm of potential theory (cf.~{\cite{MonotonicityAndGeometricInequalitiesForHypersurfacesInNonNegativeRicci, MinkowskiIneqViaPotentialTheory, MonotonicityInPotentialTheory}}), where they were used to prove Minkowski inequalities on Euclidean space and Willmore inequalities for closed hypersurfaces in manifolds with non-negative Ricci curvature.
Analogous formulae were also studied on manifolds with non-negative scalar curvature by Munteanu and Wang in {\cite{MunteanuWangGeometryOfPSC, MunteanuWangComparisonPaper}} -- recently extended by the first author to the setting of positive scalar curvature in {\cite{ManeaPSC}} -- and in {\cite{PenroseIneqViaGreenFunction, PMTViaGreenFunction, AreaCapacityIneqViaPotentialTheory}}, where the authors used such monotonic quantities to prove the positive mass theorem, the Riemannian Penrose inequality, and other capacity inequalities for three-manifolds.

Our first result in this direction is a two-parameter family of sharp monotonic quantities related to the Hessian inequality and the flow of $\nabla s^2$, providing a sharp, positively-curved extension of {\cite[Theorem 1.4]{ParkHessianMonotonicity}}.

\begin{theorem} \label{intro-th: sharp family of monotonicity formulae}
Let $(M, g)$ be a closed, connected, Einstein manifold of dimension $n \geq 3$, with sectional curvature bounded below by $k > 0$.
Let $G$ be Green's function for the operator $-\Delta + n(n - 2)k/4$ with singularity at a fixed point $p \in M$, put $s := G^{\frac{1}{2 - n}}$, and denote by $\varphi^t$ the flow of $\nabla s^2$ at time $t \in \R$.

Then, for any proper, precompact, open subset $U \subset M$ with smooth boundary and for which $\overline{s(U)} \subset (0, 2/\sqrt{k})$, for all $\beta > 0$, and for all $\alpha \geq n - \beta$, we have{\footnote{The integrals in Theorem \ref{intro-th: sharp family of monotonicity formulae} are taken with respect to the $n$-dimensional Hausdorff measure.}}
\begin{equation} \label{intro-eq: sharp family of monotonicity formulae}
    \frac{\dd}{\dd t} \left( e^{-2 \beta t} \int_{\varphi^t(U)} s^{-\alpha} \left(1 - \frac{k}{4} s^2 \right)^{\frac{\alpha - 2n}{2}} \left( \|\nabla s\|^2 + \frac{k}{4} s^2 \right)^{\frac{\beta}{2}} \right) \leq 0,
\end{equation}
and equality holds at \emph{some} $t \in \R$ if and only if $\alpha = n - \beta$ and $(M, g)$ is isometric to the round $n$-sphere of curvature $k$.
\end{theorem}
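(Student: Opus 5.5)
We differentiate under the flow with the first variation formula. For a function $F$, writing $X = \nabla s^2$,
\[
\frac{\dd}{\dd t}\int_{\varphi^t(U)} F = \int_{\varphi^t(U)} \big( \langle \nabla F, X\rangle + F \, \Delta s^2 \big) = \int_{\varphi^t(U)} \div(F \nabla s^2).
\]
So it suffices to show the pointwise inequality $\div(F\nabla s^2) \le 2\beta F$ on $\{0<s<2/\sqrt{k}\}$ away from $p$. Here
\[
F = s^{-\alpha}\Big(1-\tfrac{k}{4}s^2\Big)^{\frac{\alpha-2n}{2}} \Phi^{\beta/2}, \qquad \Phi := \|\nabla s\|^2+\tfrac{k}{4}s^2 .
\]
The hypothesis $\overline{s(U)}\subset(0,2/\sqrt k)$, which is preserved along the flow on compact time intervals by continuity, keeps everything smooth and the powers well defined.

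We compute each ingredient in terms of $u := s^2$.

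\emph{The equation for $s$.} From $\Delta G = \frac{n(n-2)k}{4}G$ and $G=s^{2-n}$ one gets
\[
\Delta s^2 = 2n\|\nabla s\|^2 - \tfrac{nk}{2}s^2 = 2n\Phi - nks^2 .
\]

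\emph{The derivative of $\Phi$.} Since $\|\nabla s\|^2 = \|\nabla u\|^2/(4u)$,
\[
\langle\nabla\Phi,\nabla u\rangle = \frac{\hess u(\nabla u,\nabla u)}{2u} - \frac{\|\nabla u\|^4}{4u^2}+\frac{k}{4}\|\nabla u\|^2 .
\]
Theorem \ref{intro-th: sharp hessian estimate} gives $\hess u(\nabla u,\nabla u) \le (2-ku)\|\nabla u\|^2$. With $\|\nabla u\|^2 = 4u\|\nabla s\|^2$, this yields
\[
\langle\nabla\Phi,\nabla u\rangle \le 4\|\nabla s\|^2\big(1 - \tfrac{k}{4}s^2 - \|\nabla s\|^2\big) = 4\|\nabla s\|^2(1-\Phi).
\]

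\emph{The remaining derivatives.} Directly,
\[
\langle\nabla s^{-\alpha},\nabla u\rangle = -2\alpha s^{-\alpha}\|\nabla s\|^2,
\qquad
\Big\langle\nabla \log\big(1-\tfrac k4 s^2\big),\nabla u\Big\rangle = -\frac{k s^2\|\nabla s\|^2}{1-\frac k4 s^2}.
\]

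Dividing by $F$ and writing $a := \|\nabla s\|^2$, $b := \frac k4 s^2$, $c := 1-b$ (so $\Phi = a+b$), we obtain
\[
\frac{\div(F\nabla u)}{F} \le -2\alpha a - \frac{2(\alpha-2n)ab}{c} + \frac{2\beta a(1-\Phi)}{\Phi} + 2n\Phi - 4nb .
\]
The goal is to show the right-hand side is at most $2\beta$. Subtracting $2\beta$, the two $\beta$ terms combine to $2\beta\frac{a-\Phi-a\Phi}{\Phi} = -2\beta b\frac{1+a}{a+b}$. Also $-2\alpha a - 2(\alpha-2n)\frac{ab}{c} = -\frac{2\alpha a}{c} + \frac{4nab}{c}$.

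This is the main obstacle: an algebraic inequality in $a \ge 0$, $0<b<1$, with the gradient estimate \eqref{intro-eq: sharp gradient estimate} $a+b\le1$ available. The expression is monotone decreasing in $\alpha$ because its $\alpha$-coefficient is $-2a/c\le0$. It therefore suffices to treat $\alpha = n-\beta$, and one expects the expression to factor as $-(\text{positive})\cdot(1-a-b)\cdot(\dots)$. A quick check at $b=0$ gives $-2(n-\beta)a+2na-2\beta = 2\beta(a-1)\le 0$, which is consistent with this.

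I would clear denominators by multiplying by $c(a+b)>0$ and collect terms as a polynomial in $a$. I would then verify the factorization $(1-a-b)\times(\text{nonnegative})$, using $a \le c$ from the gradient estimate. If the factorization fails, I would instead use the sharper one-directional Hessian bound more carefully, or bound $\frac{1+a}{a+b}$ using $a\le c$.

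For rigidity: equality at some $t$ forces pointwise equality on an open set. Either $\alpha>n-\beta$ (the strict monotonicity in $\alpha$ is then contradicted, since $a>0$ generically), or equality holds in the gradient estimate $\Phi=1$ at some point, which forces the round sphere by the rigidity in \eqref{intro-eq: sharp gradient estimate}. Conversely, on the round sphere the Hessian identity holds with equality and $\Phi\equiv1$, giving equality.
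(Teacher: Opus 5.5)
Your route is the paper's: differentiate via $\div(F\nabla s^2)$, use $\Delta s^2 = 2n\Phi - nks^2$, enter the Hessian inequality only through $\langle \nabla\Phi,\nabla s^2\rangle \le 4a(1-\Phi)$, and reduce to algebra. Your displayed upper bound for $\div(F\nabla u)/F$ is correct, and so is the reduction to $\alpha = n-\beta$. The gap is that you never show the resulting expression is $\le 2\beta$, which is the entire content beyond the setup. Worse, the one simplification you did carry out is wrong, in a way that would make your plan fail. Since $a-\Phi=-b$, one has
\[
\frac{2\beta a(1-\Phi)}{\Phi}-2\beta \;=\; -2\beta\Big(a+\frac{b}{a+b}\Big),
\]
not $-2\beta b\,\frac{1+a}{a+b}$; you dropped the term $-2\beta a^2/(a+b)$. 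With your version, at $\alpha=n-\beta$ and $b\to 0$ (i.e.\ near $p$) the expression tends to $+2\beta a>0$. So the factorization $(1-a-b)\times(\text{nonnegative})$ you hope to verify does not exist for it, and neither fallback (a sharper Hessian bound, or bounding $\frac{1+a}{a+b}$ via $a\le c$) can repair an algebra slip. Your $b=0$ check, giving $2\beta(a-1)$, matches neither version; the true value there is $0$.

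The missing step is to group terms correctly. The $n$-terms give
\[
-\frac{2na}{c}+\frac{4nab}{c}+2na-2nb=-\frac{2nb}{c}(1-\Phi).
\]
The $\beta$-terms give
\[
\frac{2\beta ab}{c}-\frac{2\beta b}{\Phi}=-2\beta b(1-\Phi)\Big(\frac1c+\frac1\Phi\Big),
\]
using $a\Phi-c=(\Phi-1)(\Phi+c)$. Together these yield the exact identity the paper derives:
\[
\frac{\div(F\nabla s^2)}{F}-2\beta=\frac{\beta}{\Phi}\big(\hess{s^2}+ks^2g-2g\big)(\nabla s,\nabla s)-2b(1-\Phi)\Big(\frac{n+\beta}{c}+\frac{\beta}{\Phi}\Big)-\frac{2(\alpha+\beta-n)a}{c}.
\]
Each term is $\le 0$ by the Hessian inequality, the gradient estimate, and $\alpha+\beta\ge n$. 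This identity also fixes your rigidity argument, which you phrase as ``either/or'' but which must deliver both conclusions. Vanishing of the derivative forces every term to vanish on the open set $\varphi^t(U)$. Since $b>0$ there, the middle term gives $\Phi\equiv 1$, hence the round sphere. Then $a=1-ks^2/4>0$ on $\varphi^t(U)$, so the last term forces $\alpha=n-\beta$.
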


For the proof of Theorem \ref{intro-th: sharp family of monotonicity formulae}, we will compute explicitly the left-hand side of \eqref{intro-eq: sharp family of monotonicity formulae} (cf.~\eqref{eq: sharp montonicity formulae part 1}), from which its monotonicity and rigidity will follow.
The motivation for this theorem is the following.
If $(M, g)$ is the round $n$-sphere of curvature $k$ and $\varphi^t$ is the flow of $\nabla s^2$ at time $t$, we will see in Section \ref{sec: sharp monotonocity formulae} (cf.~\eqref{eq: canonical radial measure is preserved by the flow on the sphere} and Lemma \ref{lemma: forms scaled by flow of nabla s squared on the sphere}) that the only non-zero radial $n$-forms $\omega =  f \vol$ -- where $f$ depends only on the distance to the singularity of $G$ and $\!\vol$ is the volume form of the sphere -- for which $(\varphi^t)^*\omega = h_\omega(t) \omega$ for some function $h_\omega$ on $\R$ are, up to a constant, precisely the $n$-forms $\omega_\beta = s^{\beta - n} (1 - ks^2/4)^{-(n + \beta)/2} \vol$ for $\beta \in \R$, for which $h_{\omega_\beta}(t) = e^{2 \beta t}$.
For a general manifold as in the hypothesis of Theorem \ref{intro-th: sharp family of monotonicity formulae} and for $\alpha = n - \beta$, weighting these forms by the gradient quantity $(\|\nabla s\|^2 + ks^2/4)^{\beta/2}$ gives us a one-parameter family of sharp monotonic quantities, with corresponding global rigidity statements.
As in {\cite{ParkHessianMonotonicity}}, this may then be extended to a two-parameter family of monotonicity formulae, which is the context of Theorem \ref{intro-th: sharp family of monotonicity formulae}.

We will also present another sharp monotonic quantity related to the results from {\cite[Section 4]{ManeaSGE}}; let us briefly recall that context.
Under the weaker hypothesis that $\ric \geq (n - 1)kg$, the first author defined in {\cite[Section 4]{ManeaSGE}} two (locally absolutely continuous) functions $A$ and $V$ given by{\footnote{In this paper, for a continuous function on an $n$-manifold, the integrals over its level sets are taken with respect to the $(n - 1)$-dimensional Hausdorff measure; those over its sublevel sets are with respect to the $n$-dimensional Hausdorff measure.
We will suppress these measures from the notation.}} 
\begin{equation} \label{intro-eq: definition of A function from SGE paper}
    A(r) := r^{1 - n} \int_{s = r} \left( \|\nabla s\|^2 + \frac{k}{4} s^2 \right) \|\nabla s\| + \frac{nk}{4} \int_{s \leq r} \left( \|\nabla s\|^2 + \frac{k}{4} s^2 \right) G
\end{equation}
and
\begin{equation} \label{intro-eq: definition of V function from SGE paper}
    V(r) := r^{-n} \int_{s \leq r} \left( \|\nabla s\|^2 + \frac{k}{4} s^2 \right) \left( \|\nabla s\|^2 - \frac{k}{4} s^2 \right) + \frac{k}{4} \int_{s \leq r} \left( \|\nabla s\|^2 + \frac{k}{4} s^2 \right) G,
\end{equation}
which act as Green's function analogues of the scale-invariant volume of spheres and balls, with $A \leq \omega_{n - 1}$ and $V \leq \omega_{n - 1}/n$.
It was then proved that $A$ is non-increasing (cf.~{\cite[Theorem 1.2]{ManeaSGE}}), that $A - 2(n - 1)V$ is non-decreasing (cf.~{\cite[Theorem 4.11]{ManeaSGE}}), and that
\begin{equation} \label{intro-eq: monotonicity of r to 2 - n times A minus volume of unit sphere}
    \frac{\dd}{\dd r} \left( r^{2 - n} (A(r) - \omega_{n - 1}) \right) = \frac{r^{1 - n}}{2} \int_{s \leq r} s^{-n} \left( \|\!\tracelessHess{s^2}\|^2 + \ric(\nabla s^2, \nabla s^2) - (n - 1)k \|\nabla s^2\|^2 \right),
\end{equation}
for almost every $r$ in the range of $s$; the right-hand side of \eqref{intro-eq: monotonicity of r to 2 - n times A minus volume of unit sphere} is non-negative, and equality to zero forces rigidity; taking the limit as $k \downarrow 0$ in \eqref{intro-eq: definition of A function from SGE paper}--\eqref{intro-eq: monotonicity of r to 2 - n times A minus volume of unit sphere} recovers the monotonicity formulae from {\cite{ColdingNewMonotonicityFormulas}}.

Since $V$ is non-increasing and $V \leq \omega_{n - 1}/n$, the product rule immediately implies that the quantity $r^2(V(r) - \omega_{n - 1}/n)$ is non-increasing.
However, we may explicitly compute the derivative of this function to also obtain a global rigidity statement, analogous to that from {\cite[Theorem 1.6]{ParkHessianMonotonicity}}.
In this spirit, it becomes a companion sharp monotonic quantity for \eqref{intro-eq: monotonicity of r to 2 - n times A minus volume of unit sphere}.

\begin{theorem} \label{intro-th: monotonicity of r squared times V minus volume of unit ball}
Let $(M, g)$ be a closed, connected, Einstein manifold of dimension $n \geq 3$, with sectional curvature bounded below by $k > 0$, and let $G$ be Green's function for the operator $-\Delta + n(n - 2)k/4$, with singularity at a fixed point $p \in M$; put $s := G^{\frac{1}{2 - n}}$.

Then, for almost every $r$ in the range of $s$, we have
\begin{equation} \label{intro-eq: monotonicity of r squared times V minus volume of unit ball}
    \begin{split}
        \frac{\dd}{\dd r} \left( r^2 \left( V(r) - \frac{\omega_{n - 1}}{n} \right) \right) &= r^{1 - n} \int_{s \leq r} (\hess{s}^2 + ks^2g - 2g)(\nabla s, \nabla s) \\
        &\quad \, + \frac{k}{2} r^{1 - n} \int_{s \leq r}  s^2 \left( 1 - \frac{r^n}{s^n} \right) \left( 1 - \|\nabla s\|^2 - \frac{k}{4} s^2 \right).
    \end{split}
\end{equation}
Moreover, the right-hand side of \eqref{intro-eq: monotonicity of r squared times V minus volume of unit ball} vanishes at some $r > 0$ if and only if $(M, g)$ is isometric to the round $n$-sphere of constant curvature equal to $k$.
\end{theorem}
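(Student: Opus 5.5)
Theorem \ref{intro-th: monotonicity of r squared times V minus volume of unit ball} concerns $V$, so the plan is to compute $V'(r)$ directly by the coarea formula and then rewrite the boundary terms as interior integrals using the divergence theorem and the equation $\Delta G = \frac{n(n-2)k}{4} G$ away from $p$. (The Hessian appearing in \eqref{intro-eq: monotonicity of r squared times V minus volume of unit ball} is presumably $\hess{s^2}$, and I will read it that way.)

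\textbf{Step 1: the derivative.} Write $F := \|\nabla s\|^2 + \frac{k}{4}s^2$. The coarea formula gives
\[
\frac{\dd}{\dd r}\bigl(r^2 V(r)\bigr) = (2-n) r^{1-n}\int_{s\le r} F\Bigl(\|\nabla s\|^2 - \tfrac{k}{4}s^2\Bigr) + r^{2-n}\int_{s=r} \frac{F\bigl(\|\nabla s\|^2 - \frac{k}{4}r^2\bigr)}{\|\nabla s\|} + \frac{k}{2} r \int_{s\le r} FG + \frac{k}{4} r^{2+2-n}\int_{s=r} \frac{F}{\|\nabla s\|}.
\]
Regular values suffice, since by Sard they have full measure. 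Adding $-\frac{2r}{n}\omega_{n-1}$ then gives the derivative of $r^2(V(r)-\omega_{n-1}/n)$.

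\textbf{Step 2: converting boundary terms to interior ones.} There are two tools here. First, $\int_{s=r} s^{1-n}\|\nabla s\| = \omega_{n-1} - \frac{nk}{4}\int_{s\le r}\ldots$, a flux identity coming from $\Delta G$ and the normalisation of $G$. Second, the identity that defines $A$, namely the flux of $F\nabla s$ weighted by $s^{1-n}$, computed via Bochner. Concretely, I will apply the divergence theorem on $\{\epsilon \le s \le r\}$ to the vector field $X := s^{2-n}\bigl(\tfrac12 \hess{s^2}(\nabla s,\cdot) - \ldots\bigr)$. The aim is to choose $X$ so that:
\begin{itemize}
\item its divergence yields $s^{1-n}\mathcal{H}(\nabla s,\nabla s)$ together with $s$-weighted gradient terms;
\item its flux through $\{s=r\}$ reproduces exactly the combination $\int_{s=r} F\|\nabla s\|(\ldots)$ found in Step 1.
\end{itemize}
Since the Step 1 boundary terms involve $1/\|\nabla s\|$, I would first use $\int_{s=r}\psi/\|\nabla s\| = \frac{\dd}{\dd r}\int_{s\le r}\psi$ together with integration of $\Delta s^2$ identities, namely $\Delta s^2 = 2n\|\nabla s\|^2 - \frac{n-2}{2}k s^2 \cdot(\ldots)$, which follows from the PDE for $G$. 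The inner boundary at $\epsilon$ is handled with the asymptotics $s \sim \dist(p,\cdot)$ and $\hess{s^2} \to 2g$. Its contribution is what produces $-\frac{2r}{n}\omega_{n-1}$, matching the factor $r^{1-n}\cdot r^n/s^n$ in the second integral.

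\textbf{Step 3: identification and rigidity.} After the rewriting, I collect terms into $r^{1-n}\int_{s\le r}(\mathcal{H}-2g)(\nabla s,\nabla s)$ plus a remainder. The remainder should simplify, using $\tfrac{\dd}{\dd s}$-integration of $s^{-n}$ (producing the factor $1 - r^n/s^n$), to the second integral in \eqref{intro-eq: monotonicity of r squared times V minus volume of unit ball}. For rigidity, both integrands are $\le 0$:
\begin{itemize}
\item the first by Theorem \ref{intro-th: sharp hessian estimate};
\item the second because $1 - r^n/s^n \le 0$ on $\{s\le r\}$ while $1 - F \ge 0$ by \eqref{intro-eq: sharp gradient estimate}.
\end{itemize}
Vanishing therefore forces $(\mathcal{H}-2g)(\nabla s,\nabla s) \equiv 0$ on an open set where $\nabla s \neq 0$ (nonempty near $p$). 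This is equality at some nonzero vector in Theorem \ref{intro-th: sharp hessian estimate}, so the sphere follows. The converse is immediate from the explicit sphere computation.

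The main obstacle is Step 2: finding the exact vector field whose divergence and flux produce precisely the stated integrands. I would reverse-engineer $X$ by checking it on the round sphere, where every term is explicit, and only then verify the general identity using $\div(\hess{s^2}) = \nabla\Delta s^2 + \ric(\nabla s^2)$ and the Einstein condition.
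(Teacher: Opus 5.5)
\textbf{There is a genuine gap: Step 2, which carries the whole identity, is not done, and the mechanism you sketch for it points the wrong way.} Your Step 1 is correct, your rigidity argument in Step 3 is fine, and you are right to read the Hessian as $\hess{s^2}$.

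\textbf{The boundary term.} There is nothing to do about the $1/\|\nabla s\|$ terms. Since $G = r^{2-n}$ on $\{s=r\}$, the two boundary integrals of $F/\|\nabla s\|$ in your Step 1 cancel exactly, leaving only $r^{2-n}\int_{s=r}F\|\nabla s\|$. This is $\frac{r^{1-n}}{2}$ times the flux of the \emph{unweighted} field $F\nabla s^2$ through $\{s=r\}$. Apply the divergence theorem on $\{s\le r\}$ with $\Delta s^2 = 2n\|\nabla s\|^2 - \frac{nk}{2}s^2$; no inner boundary term survives, since $F\nabla s^2 = O(s)$ near $p$. This gives
\begin{equation*}
\frac{\dd}{\dd r}\bigl(r^2V(r)\bigr) = 2rV(r) + \frac{r^{1-n}}{2}\int_{s\le r}\langle\nabla F,\nabla s^2\rangle .
\end{equation*}
Using $\hess{s^2} = 2s\hess{s} + 2\,ds\otimes ds$, the chain rule then gives the pointwise identity $\langle\nabla F,\nabla s^2\rangle = 2(\mathcal{H}-2g)(\nabla s,\nabla s) + 4\|\nabla s\|^2(1-F)$, where $\mathcal{H} = \hess{s^2}+ks^2g$. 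None of this needs an $s^{2-n}$ weight, a field built from $\hess{s^2}(\nabla s,\cdot)$, or any Bochner or Einstein input. The identity itself is derived without any curvature hypothesis, so routing it through $\div\hess{s^2} = d\Delta s^2 + \ric(\nabla s^2,\cdot)$ would only create Ricci terms that must cancel. Curvature enters solely through the signs of the two integrands.

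\textbf{The $\omega_{n-1}$ term.} It does not come from an inner boundary at $\{s=\epsilon\}$, and the factor $1-r^n/s^n$ does not come from integrating $s^{-n}$. What is needed is a bulk representation of $\omega_{n-1}/n$. Combine your flux identity $r^{1-n}\int_{s=r}\|\nabla s\| = \omega_{n-1} - \frac{nk}{4}\int_{s\le r}G$ with $\int_{s\le r}\Delta s^2 = 2r\int_{s=r}\|\nabla s\|$ to get
\begin{equation*}
r^{-n}\int_{s\le r}\Bigl(\|\nabla s\|^2 - \frac{k}{4}s^2\Bigr) + \frac{k}{4}\int_{s\le r}G = \frac{\omega_{n-1}}{n}
\end{equation*}
(this is {\cite[Corollary 4.9]{ManeaSGE}}). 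Hence $\frac{\omega_{n-1}}{n} - V(r) = r^{-n}\int_{s\le r}(1-F)\bigl(\|\nabla s\|^2 - \frac{k}{4}s^2 + \frac{k}{4}r^nG\bigr)$, and $1-r^n/s^n$ appears simply from $G = s^{2-n}$.

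\textbf{Assembling.} Insert both displays into $\frac{\dd}{\dd r}\bigl(r^2(V-\omega_{n-1}/n)\bigr) = 2r\bigl(V-\frac{\omega_{n-1}}{n}\bigr) + r^2V'$. The terms $\pm 2r^{1-n}\int_{s\le r}\|\nabla s\|^2(1-F)$ cancel, and the stated formula follows. Your proposal contains neither the flux of $F\nabla s^2$ nor the pointwise formula for $\langle\nabla F,\nabla s^2\rangle$. It also never assembles the representation of $\omega_{n-1}/n$ from the pieces you list, so it does not yet establish the identity. Once those are in place, your Step 3 completes the proof.
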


\subsection{Applications of the Hessian inequality.}
Besides the sharp monotonic quantities from Paragraph \ref{intro-subsec: sharp monotonic quantities for the hessian}, we will also discuss two other applications of the Hessian inequality from Theorem \ref{intro-th: sharp hessian estimate}.
Our first such result is a sharp comparison inequality (with respect to the round sphere) for the sum of two distinct Green's functions, whose singularities will act as ``comparison antipodal points''.
In this spirit, recently (cf.~{\cite{ParkRigidityPositiveCurvature}}), a Weyl tensor rigidity for the round four-sphere has been shown using a similar idea involving the quotient of two such Green's functions.
Our theorem takes the following form.

\begin{theorem} \label{intro-th: pole-antipodal point pointwise inequality for greens functions}
Let $(M, g)$ be a closed, connected, Einstein manifold of dimension $n \geq 3$, with sectional curvature bounded below by $k > 0$, and denote by $L := - \Delta + n(n - 2)k/4$.

Then, there exist distinct points $p, q \in M$ so that, if $G_p$ and $G_q$ denote Green's functions for $L$ with singularities at $p$ and $q$, respectively, and if $s_p := G_p^{1/(2 - n)}$ and $s_q := G_q^{1/(2 - n)}$, then
\begin{equation} \label{intro-eq: pole-antipodal point pointwise inequality for greens functions}
    s_p^2 + s_q^2 \leq \frac{4}{k},
\end{equation}
and equality holds in \eqref{intro-eq: pole-antipodal point pointwise inequality for greens functions} at \emph{some} point of $M \setminus \{p, q\}$ if and only if $(M, g)$ is isometric to the round $n$-sphere of constant curvature equal to $k$.
\end{theorem}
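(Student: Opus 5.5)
The plan is to reduce everything to one-dimensional Sturm comparisons along geodesics, fed by the Hessian inequality of Theorem \ref{intro-th: sharp hessian estimate} and the upper bound $s \leq 2/\sqrt{k}$ implied by \eqref{intro-eq: sharp gradient estimate}.

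\textbf{Choice of $p$ and $q$.} Green's function $G(x, y)$ of $L$ is symmetric and positive. Since $G(x,y) \to \infty$ as $y \to x$, the function $(x, y) \mapsto G(x, y)^{1/(2 - n)}$ attains its maximum on $\{x \neq y\}$ at some pair $(p, q)$ with $p \neq q$. Then $q$ is a maximum point of $s_p$ and, by symmetry, $p$ is a maximum point of $s_q$. Put $M := s_p(q)^2 = s_q(p)^2$. By \eqref{intro-eq: sharp gradient estimate} we have $M \leq 4/k$.

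\textbf{One-dimensional comparisons.} Let $u = s_p^2$ and let $\gamma$ be a unit-speed geodesic avoiding $p$. Theorem \ref{intro-th: sharp hessian estimate} gives $f'' + kf \leq 2$ for $f = u \circ \gamma$. For any solution $\phi$ of $\phi'' + k\phi = 2$ with the same initial data, the difference $z = f - \phi$ satisfies
\[
z(r) = \int_0^r \frac{\sin(\sqrt{k}(r - t))}{\sqrt{k}} \, (z'' + kz)(t) \, \dd t \leq 0 \quad \text{for } 0 \leq r \leq \pi/\sqrt{k},
\]
and this range covers the whole manifold by Bonnet--Myers. Two applications follow.
\begin{itemize}
\item Along a minimizing geodesic from $p$ we have $f(0) = 0 = f'(0)$, so $s_p^2(x) \leq \frac{2}{k}(1 - \cos(\sqrt{k}\, a))$ with $a = \dist(p, x)$.
\item Along a minimizing geodesic from $q$ we have $f(0) = M$ and $f'(0) = 0$, so $s_p^2(x) \leq \frac{2}{k} + (M - \frac{2}{k})\cos(\sqrt{k}\, b)$ with $b = \dist(q, x)$.
\end{itemize}
The same bounds hold for $s_q$ with the roles of $p$ and $q$ swapped. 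One must check that the geodesics avoid the singularity; at the endpoint $p$ itself one can use continuity of $s_p^2$, which extends by $0$.

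\textbf{Combining the bounds.} Adding the bound for $s_p^2$ from $q$ to the bound for $s_q^2$ from $q$ gives
\[
s_p^2 + s_q^2 \leq \frac{4}{k} + \left(M - \frac{4}{k}\right)\cos(\sqrt{k}\, b),
\]
which is at most $4/k$ wherever $b \leq \pi/(2\sqrt{k})$. Symmetrically, we get the bound wherever $a \leq \pi/(2\sqrt{k})$. The main obstacle is the remaining region where both $a, b > \pi/(2\sqrt{k})$. There I would first try the extremality of $(p, q)$ more seriously. One route is to use it to show that $M$ is large enough that the region where both distances exceed $\pi/(2\sqrt{k})$ is controlled. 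Another is to apply the second-order condition at the maximum (including the Hessian of $s_p^2$ at $q$ along with the first bound) to sharpen the comparison. If neither works, I would replace the pointwise combination by a maximum-principle argument for $F = s_p^2 + s_q^2$. Using $\Delta s^2 = 2n\|\nabla s\|^2 - \frac{nk}{2}s^2$ (which follows from $LG = 0$) together with the traced Hessian inequality, $F$ should satisfy a differential inequality at an interior maximum that is incompatible with $F > 4/k$. Near $p$ and near $q$, the quantity $F$ tends to $M \leq 4/k$.

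\textbf{Rigidity.} If equality holds at some $x \notin \{p, q\}$, then equality holds in the Sturm comparison along the relevant geodesic. This forces $z'' + kz \equiv 0$, so $\hess s_p^2 + ks_p^2 g = 2g$ holds in the direction $\gamma'$ at interior points. The equality case of Theorem \ref{intro-th: sharp hessian estimate} then yields that $(M, g)$ is the round sphere of curvature $k$. Conversely, on the round sphere one takes $q$ antipodal to $p$, and the explicit formula $s = 2\snk(\dist/2)$ gives $s_p^2 + s_q^2 = \frac{4}{k}(\sin^2 + \cos^2) = 4/k$.
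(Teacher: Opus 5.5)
Your choice of $(p,q)$ as a maximizing pair for $G^{1/(2-n)}$ on $M \times M$ is correct, and so are your four one-dimensional Sturm bounds. (Write $m^2$ for your ``$M$'', which clashes with the manifold.) The gap is at the combination step. Adding the two bounds from $q$ with equal weights leaves the term $(m^2 - 4/k)\cos(\sqrt{k}\,b)$. This term is positive once $b > \pi/(2\sqrt{k})$, unless $m^2 = 4/k$, so the region where both $a, b > \pi/(2\sqrt{k})$ is never handled.

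The routes you list for that region are not carried out, and the maximum-principle fallback cannot work. The traced Hessian inequality only gives $\Delta F + nkF \leq 4n$, and the full one only gives $\hess{F} + kFg \leq 4g$. At an interior maximum with $F > 4/k$, these just say $\Delta F < 0$, which is perfectly compatible with a maximum. Any upper bound has to come from the data at $q$ (value $m^2$ and vanishing gradient), and only your ODE comparison uses that data.

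The missing idea is to weight the bounds before adding them. You may assume $m^2 \geq 2/k$, since otherwise $s_p^2 + s_q^2 \leq 2m^2 < 4/k$ trivially. Put $c := km^2/2 - 1 \geq 0$, so that $2c/k = m^2 - 2/k$. Add your bound for $s_p^2$ from $q$ to $c$ times your bound for $s_q^2$ from $q$; the cosine terms then cancel exactly:
\[
s_p^2 + c\, s_q^2 \leq \frac{2}{k}(1 + c) + \left( m^2 - \frac{2}{k} - \frac{2c}{k} \right) \cos(\sqrt{k}\, b) = m^2 .
\]
This holds with no restriction on $b$. Swapping $p$ and $q$ gives $s_q^2 + c\, s_p^2 \leq m^2$. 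Adding the two yields $(1 + c)(s_p^2 + s_q^2) \leq 2m^2$ with $1 + c = km^2/2$, i.e.\ $s_p^2 + s_q^2 \leq 4/k$ everywhere.

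This is precisely the paper's argument. Theorem \ref{th: pole-antipodal point inequality} applies the Sturm comparison directly to $u := m^2 - s_p^2 - c\, s_q^2$, which satisfies $\hess{u} + kug \geq 0$, $u(q) = 0$ and $\nabla u|_q = 0$. The weight $c$ is exactly what makes the model solution the constant $m^2$; the paper then symmetrizes.

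Your rigidity argument goes through once phrased this way. Equality in the sum forces equality in the weighted bound from $q$. That forces equality in the $s_p^2$-comparison along a minimizing geodesic from $q$, hence $\hess{s_p^2}(\gamma', \gamma') + k s_p^2 = 2$ on $(0, b)$. The rigidity part of Theorem \ref{intro-th: sharp hessian estimate} then applies.
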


On the round $n$-sphere of curvature $k$, one may directly check that \eqref{intro-eq: pole-antipodal point pointwise inequality for greens functions} holds as an equality whenever $p$ and $q$ are antipodal points.
In general, since we always have the inequality $s_p \leq 2 \snk(\dist(p, \cdot)/2)$ (by {\cite[Proposition 2.2]{ManeaSGE}}), the diameter comparison theorem tells us that $s_p^2 \leq 4/k$, and thus \eqref{intro-eq: pole-antipodal point pointwise inequality for greens functions} is an improved comparison inequality for two distinct Green's functions.

For the proof of Theorem \ref{intro-th: pole-antipodal point pointwise inequality for greens functions}, we will actually show a more general comparison estimate (cf.~Theorem \ref{th: pole-antipodal point inequality}) which will imply this theorem.
Specifically, we will prove that, if $G_p$ is Green's function for $L := - \Delta + n(n - 2)k/4$ with singularity at $p$ and if $s_p := G_p^{1/(2 - n)}$, then, if $m > 0$ is the maximal value of $s_p$ on $M$ and if $q \in M \setminus \{p\}$ is so that $s_p(q) = m$, we have
\begin{equation} \label{intro-eq: pole-antipodal point inequality}
    s_p^2 + \left( \frac{k m^2}{2} - 1 \right) s_q^2 \leq m^2,
\end{equation}
where $s_q := G_q^{1/(2 - n)}$ and $G_q$ is Green's function for $L$ with singularity at $q$.
A similar rigidity statement holds for \eqref{intro-eq: pole-antipodal point inequality} as well: equality holds at some \emph{point} in $M \setminus \{p, q\}$ if and only if $(M, g)$ is isometric to the round $n$-sphere of constant curvature equal to $k$.

In the context of \eqref{intro-eq: pole-antipodal point inequality}, the round $n$-sphere has the property that $s_p$ has a unique maximum point, which is the antipodal point of $p$.
In general, it is not necessarily the case that if $q$ is a maximum point of $s_p$, then $p$ is a maximum point of $s_q$; however, two such points always exist by compactness of $M$.
The inequality \eqref{intro-eq: pole-antipodal point inequality} accounts for this.
Theorem \ref{intro-th: pole-antipodal point pointwise inequality for greens functions} follows by applying \eqref{intro-eq: pole-antipodal point inequality} to a pair of points $p$ and $q$ for which $p$ is also a maximum point of $s_q$.
The proof of \eqref{intro-eq: pole-antipodal point inequality} will employ a similar idea as in the proof of Obata's rigidity theorem for the sphere (cf.~{\cite{ObataSphericalRigidity}}): the Hessian estimate from Theorem \ref{intro-th: sharp hessian estimate} will allow us to show that the function $u := m^2 - s_p^2 - (-1 + km^2/2) s_q^2$ satisfies the inequality $\hess{u} + kug \geq 0$; a differential equation comparison argument along geodesics starting at $q$ then necessarily shows that $u$ must be non-negative, with a corresponding rigidity counterpart.

Our last application will be the inequality
\begin{equation} \label{intro-eq: hessian warped product deficit controlled by gradient deficit}
    4n \left( \|\nabla s\|^2 + \frac{k}{4} s^2 - 1 \right)^2 \leq \|\!\hess{s^2} + ks^2g - 2g\|^2 \leq 4n^2 \left( \|\nabla s\|^2 + \frac{k}{4} s^2 - 1 \right)^2,
\end{equation}
which follows immediately from the Hessian inequality from Theorem \ref{intro-th: sharp hessian estimate} and the linear algebra inequality that, for a symmetric, negative semi-definite matrix $A$, we have that $\|A\| \leq - \tr(A)$.
By the rigidity theory of warped products from {\cite{CheegerColdingWarpedProducts}}, in our setting of an Einstein $n$-manifold $(M, g)$ with positive sectional curvature, we know that if $\hess{s^2} + ks^2 g - 2g$ is close to zero (in an averaged $L^2$ sense), then $(M, g)$ is Gromov-Hausdorff close to the round $n$-sphere of curvature $k$.
In this spirit, \eqref{intro-eq: hessian warped product deficit controlled by gradient deficit} shows that if the ``gradient deficit'' $\|\nabla s\|^2 + ks^2/4 - 1$ is close to $0$, then so is the ``Hessian deficit'' $\hess{s^2} + ks^2 g - 2g$, and thus $(M, g)$ must be Gromov-Hausdorff close to the round $n$-sphere of curvature $k$.
A similar idea has been employed in {\cite{ParkRigidityPositiveCurvature}}, where the same conclusion has been shown under the hypothesis that the averaged integral of $(1 - \|\nabla s\|^2 - ks^2/4) G$ is close to zero.

\subsection{The Hessian inequality for non-parabolic manifolds.}
We end this introduction with a short discussion about the sharp analogue of the Hessian inequality in the setting \eqref{intro-eq: hessian inequality for non-parabolic manifolds}.
Sticking to that notation, in the spirit of Theorem \ref{intro-th: sharp hessian estimate}, the corresponding sharp estimate (by comparison to Euclidean space) would be $\hess{\widetilde{s} \, {}^2} \leq 2\widetilde{g}$, with equality forcing rigidity (with respect to Euclidean space); however, this is not the case.
We will show in Appendix \ref{appendix-sec: hessian inequalilty on noncompact manifolds} that, whilst $\hess{\widetilde{s} \, {}^2} = 2\tilde{g} + o(1)$ near the singularity of $\widetilde{G}$, it is possible that the corresponding inequality fails arbitrarily close to the pole (cf.~Theorem \ref{appendix-th: counter examples for non-compact case}).
In particular, we will see that this happens for manifolds of the form $\R^m \times N_1 \times \cdots \times N_\ell$ with $m$ sufficiently large, where $N_1, \cdots, N_\ell$ are closed, Einstein manifolds with positive scalar curvature and non-negative sectional curvature (cf.~Corollary \ref{appendix-cllr: failure of hessian inequality for products}).
However, we will also discuss that our proof of Theorem \ref{intro-th: sharp hessian estimate} extends to all non-parabolic, flat manifolds.

\subsection{Organization of the paper.}
Section \ref{sec: sharp hessian inequality} contains the proof of Theorem \ref{intro-th: sharp hessian estimate}, whilst in Section \ref{sec: sharp monotonocity formulae}, we prove Theorem \ref{intro-th: sharp family of monotonicity formulae} and \eqref{intro-eq: monotonicity of r squared times V minus volume of unit ball}.
Section \ref{sec: applications} is dedicated to showing Theorem \ref{intro-th: pole-antipodal point pointwise inequality for greens functions} and \eqref{intro-eq: hessian warped product deficit controlled by gradient deficit}.
Appendix \ref{appendix-sec: asymptotics of Greens function near the pole on closed manifolds} contains the proof of the $C^2$ asymptotics of Green's functions on closed manifolds near their singularity.
Finally, in Appendix \ref{appendix-sec: hessian inequalilty on noncompact manifolds}, we discuss the Hessian inequality for non-parabolic manifolds with non-negative curvature.

\subsection*{Acknowledgements.} We would like to thank our advisors, Tobias Colding and William Minicozzi, for their numerous helpful discussions and continuous support.
Cosmin Manea would also like to thank Jiewon Park for a valuable conversation about {\cite{ParkHessianEstimate}}.
During this project, Cosmin Manea was partially supported by NSF DMS Grant 2405393, and Jacob Reznikov was supported by the Natural Sciences and Engineering Research Council of Canada (NSERC) [grant number CGRSD-609347-2026].
\section{The sharp Hessian inequality} \label{sec: sharp hessian inequality}

In this section, we prove Theorem \ref{intro-th: sharp hessian estimate}.
Let $(M, g)$ be a closed, connected, Einstein manifold of dimension $n \geq 3$, with sectional curvature bounded below by $k > 0$.
Fix a point $p \in M$ and let $G$ be Green's function for the operator $-\Delta + n(n - 2)k/4$, with singularity at $p$.
By the strong maximum principle, $G$ is positive on $M \setminus \{p\}$; put $s := G^{\frac{1}{2 - n}}$.

Before we begin the proof of Theorem \ref{intro-eq: sharp hessian estimate}, let us first set the curvature conventions and notations we will use in the rest of this manuscript.
We will employ the following convention for the curvature tensor: we will denote by $R$ the $(1, 3)$-version of the Riemann curvature tensor, defined as
\begin{equation}
    R(X, Y)Z := \nabla_X \nabla_Y Z - \nabla_Y \nabla_X Z - \nabla_{[X, Y]} Z
\end{equation}
for vector fields $X, Y$, and $Z$ on $M$, and by $Rm$ the $(0, 4)$-version of $R$.
With this notation, for two linearly independent vectors $u$ and $v$, the sectional curvature of the plane spanned by $u$ and $v$ is defined by the equation
\begin{equation} \label{sec: sectional curvature convention}
    \sec(u, v) \equiv \frac{Rm(u, v, v, u)}{\|u\|^2 \|v\|^2 - \langle u, v \rangle^2}.
\end{equation}
We will also denote by $\Delta$ the (non-positive) Laplacian acting on both functions and tensors on $M$; we recall that if $(E_1, \cdots, E_n)$ is a local orthonormal frame for $M$, then $\Delta$ is defined as 
\begin{equation} \label{eq: Laplacian convention}
    \Delta \equiv \sum_{i = 1}^n \left( \nabla_{E_i} \nabla_{E_i} - \nabla_{\nabla_{E_i} E_i} \right).
\end{equation}
Moreover, for covariant two-tensors $T$ and $S$, we will denote by $T \circ S$ the $(0, 2)$-version of the composition of the endomorphisms of $TM$ induced by $T$ and $S$.

Let us now proceed by discussing the proof idea for Theorem \ref{intro-th: sharp hessian estimate}; set
\begin{equation} \label{eq: definition of H tensor}
    \mathcal{H} := \hess{s^2} + ks^2 g.
\end{equation}
Denote by $\lambda : M \setminus \{p\} \to \R$ the largest eigenvalue of $\mathcal{H}$ (viewed as a symmetric endomorphism of $TM$ through $g$); remark that this is a continuous function on $M \setminus \{p\}$.
To prove Theorem \ref{intro-th: sharp hessian estimate}, we need to show that $\lambda \leq 2$ on $M \setminus \{p\}$, with equality holding at \emph{some} point if and only if $(M, g)$ is isometric to the round $n$-sphere of curvature $k$.
By the asymptotics of Green's function near its singularity (cf.~Theorem \ref{appendix-th: hessian asymptotics of greens function in positive curvature} and Corollary \ref{appendix-cllr: sharp c2 asymptotics for s near singularity}), we have that $\mathcal{H} = 2g + o(1)$ as $x \to p$, and so we may extend $\lambda$ to a continuous function on $M$ by setting $\lambda(p) := 2$.
As mentioned in Section \ref{sec: intro} (cf.~Paragraph \ref{intro-subsec: proof of sharp hessian inequality}), we will show that $(\Delta - 2n \nabla_{\nabla \log s}) \lambda > 0$ in the barrier sense at all local maxima of $\lambda$ where $\lambda > 2$.
This would then immediately imply the desired result; however, for the sake of clarity, we will present the proof directly in the form of a contradiction argument.

Suppose that $q \in M \setminus \{p\}$ is a local maximum point for $\lambda$ with $\lambda(q) > 2$.
Denote by $m \geq 1$ the multiplicity of $\lambda(q)$ and by $\lambda_1 = \lambda_2 = \cdots = \lambda_m > \lambda_{m + 1} \geq \cdots \geq \lambda_n$ the eigenvalues of $\mathcal{H}|_q$, with $\lambda_1 = \cdots = \lambda_m = \lambda(q)$.
Since the chain rule immediately tells us that
\begin{equation} \label{eq: Delta s squared}
    \Delta s^2 = 2n \|\nabla s\|^2 - \frac{nk}{2} s^2,
\end{equation}
it follows that $\tr(\mathcal{H}) = \Delta s^2 + nk s^2 = 2n (\|\nabla s\|^2 + ks^2/4)$.
The sharp gradient estimate (cf.~\eqref{intro-eq: sharp gradient estimate} and {\cite[Theorem 1.1]{ManeaSGE}}) then implies that $\tr(\mathcal{H}) \leq 2n$, with equality holding at \emph{some} point if and only if $(M, g)$ is isometric to the round $n$-sphere of curvature $k$, in which case $\mathcal{H} \equiv 2g$.

Since $\lambda(q) > 2$, it follows that the multiplicity $m$ of $\lambda(q)$ satisfies the inequality $1 \leq m < n$.
Let $(e_1, \cdots, e_n)$ be an orthonormal basis for $T_q M$ of eigenvectors of $\mathcal{H}$, with $\mathcal{H}(e_i, e_j) = \delta_{ij} \lambda_i$.
As mentioned in Paragraph \ref{intro-subsec: proof of sharp hessian inequality} in Section \ref{sec: intro}, we define the barrier function $h$ for $\lambda$ as
\begin{equation} \label{eq: barrier function}
    h := \mathcal{H}(V, V),
\end{equation}
where $V$ is a unit vector field defined near $q$ with $V|_q = e_1$ and
\begin{equation} \label{eq: derivative of the vector field for barrier function}
    \nabla_{e_i} V = \sum_{j = m + 1}^n \frac{(\nabla_{e_i} \mathcal{H})(e_j, e_1)}{\lambda(q) - \lambda_j} e_j
\end{equation}
for all $i \in \{1, \cdots, n\}$.
Formally, $V$ is defined as follows.
Let $(E_1, \cdots, E_n)$ be a local orthonormal frame for $M$ near $q$ with $E_i|_q = e_i$ and $\nabla E_i|_q = 0$, and let $(x^1, \cdots, x^n)$ be normal coordinates centred at $q$ with $\partial_i|_q = e_i$; we may then set
\begin{equation} \label{eq: formal definition of vector field for barrier function}
    V := \|Y\|^{-1} Y, \text{ where } Y := E_1 + \sum_{j = m + 1}^n \sum_{i = 1}^n \frac{(\nabla_{e_i} \mathcal{H})(e_j, e_1)}{\lambda(q) - \lambda_j} x^i E_j,
\end{equation}
and a straightforward computation shows that $V|_q = e_1$ and that $V$ satisfies \eqref{eq: derivative of the vector field for barrier function}.

By construction and the variational characterization of eigenvalues of $\mathcal{H}$, $h$ is a smooth function with $h \leq \lambda$ and $h(q) = \lambda(q)$, and thus $h$ is a lower support for $\lambda$ at $q$; moreover, $q$ is a local maximum point for $h$, as it is one for $\lambda$.
The rest of this section is devoted to proving that $\Delta h(q) - 2n \langle \nabla \log s|_q, \nabla h|_q \rangle > 0$, thus giving us a contradiction to the existence of such a point $q$.
The motivation for this choice of barrier comes from {\cite[Lemma 5]{FlowsByPowerOfGaussCurvature} and {\cite[Lemma 3.1]{ConstantRankTheoremsViscosity}}}; a straightforward reformulation of those statements (rephrasing them for the largest eigenvalue of a tensor instead of the smallest one) allows us to restate those lemmas in the following form: if $\varphi$ is a smooth function that is an upper support for $\lambda$ at $q$ (\ie $\varphi \geq \lambda$ and $\varphi(q) = \lambda(q)$), then 
\begin{equation} \label{eq: support viscosity inequality for the barrier}
    \Delta \varphi(q) \geq (\Delta \mathcal{H})(e_1, e_1) + 2 \sum_{i = 1}^n \sum_{j = m + 1}^n \frac{\big( (\nabla_{e_i} \mathcal{H})(e_j, e_1) \big)^2}{\lambda(q) - \lambda_j}.
\end{equation}
The proof of this identity in {\cite[Lemma 5]{FlowsByPowerOfGaussCurvature}} is actually done through a barrier function argument analogously defined as in \eqref{eq: barrier function}.
Equation \eqref{eq: support viscosity inequality for the barrier} shows that $\Delta \lambda$ satisfies an inequality in the viscosity sense at $q$, and the right-hand side from \eqref{eq: support viscosity inequality for the barrier} would be equal to $\Delta \lambda(q)$ if $\lambda$ would be simple and twice differentiable at $q$.
In this sense, as our proof of Theorem \ref{intro-th: sharp hessian estimate} revolves around proving that $\Delta \lambda$ satisfies an inequality in a weak sense, these previous results and \eqref{eq: support viscosity inequality for the barrier} would formally allow us to prove a pointwise inequality for the Laplacian of $\lambda$ (at least at the points where $\lambda$ is smooth).
We refer the reader to {\cite[Theorem 18.4]{AndrewsGeometricFlows}} for other general computations of the derivatives of such eigenvalues.

In the spirit of the above, we will show that, at the point $q$,
\begin{equation} \label{eq: master identity for the laplacian of the barrier function}
    \Delta h - \frac{2n}{s} \langle \nabla s, \nabla h \rangle = (\Delta \mathcal{H})(e_1, e_1) - \frac{2n}{s} (\nabla_{\nabla s} \mathcal{H})(e_1, e_1) + 2 \sum_{i = 1}^n \sum_{j = m + 1}^n \frac{\big( (\nabla_{e_i} \mathcal{H})(e_j, e_1) \big)^2}{\lambda - \lambda_j},
\end{equation}
and we will show that the above right-hand side is positive.
However, it is worth remarking that the term $(\Delta \mathcal{H})(e_1, e_1) - 2n (\nabla_{\nabla \log s} \mathcal{H})(e_1, e_1)$ is not, a priori, non-negative; it is the entire right-hand side of \eqref{eq: master identity for the laplacian of the barrier function} that is a positive quantity, and we will see that this is a consequence of $g$ being an Einstein metric (cf.~equations \eqref{eq: sharp hessian inequality proof part 2} and \eqref{eq: final computation for the divergence term in laplace of barrier function}).
Explicitly, we will show that the last term on the right-hand side of \eqref{eq: master identity for the laplacian of the barrier function} may be bounded below by an expression involving $(\div \mathcal{H}(e_1))^2$, which is related to the Ricci tensor of $g$ as $\mathcal{H} \equiv \hess{s^2} + k s^2 g$.
For this reason, our proof does not extend to greater generality (to a setting of non-Einstein manifolds), and, in the limit $k \downarrow 0$, it does not specialize to a proof in the setting of non-parabolic manifolds with parallel Ricci tensor and non-negative sectional curvature of Park (cf.~{\cite{ParkHessianEstimate}}).
However, our proof does extend to all flat manifolds; we will discuss this more in Appendix \ref{appendix-sec: hessian inequalilty on noncompact manifolds}.

Let us now proceed with the proof of \eqref{eq: master identity for the laplacian of the barrier function}.
We record this as the lemma below.

\begin{lemma} \label{lemma: laplacian of barrier test function}
In the setting of \eqref{eq: barrier function}, at the point $q$, we have
\begin{equation} \label{eq: laplacian of barrier test function}
    \Delta h - \frac{2n}{s} \nabla_{\nabla s} h = (\Delta \mathcal{H})(e_1, e_1) - \frac{2n}{s} (\nabla_{\nabla s} \mathcal{H}) (e_1, e_1) + 2 \sum_{i = 1}^n \sum_{j = m + 1}^n \frac{\big( (\nabla_{e_i} \mathcal{H})(e_j, e_1) \big)^2}{\lambda - \lambda_j}.
\end{equation}
\end{lemma}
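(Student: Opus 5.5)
The plan is to compute $\Delta h$ and $\nabla h$ at $q$ directly from $h = \mathcal{H}(V,V)$ with the Leibniz rule. Work in the local orthonormal frame $(E_1,\dots,E_n)$ with $E_i|_q = e_i$ and $\nabla E_i|_q = 0$, so that at $q$ the Laplacian \eqref{eq: Laplacian convention} reduces to $\sum_i \nabla_{E_i}\nabla_{E_i}$. The only inputs are:
\begin{itemize}
\item the defining property \eqref{eq: derivative of the vector field for barrier function} of $V$;
\item the fact that $V$ has unit length;
\item the eigenvector relations $\mathcal{H}(e_i,\cdot) = \lambda_i \langle e_i,\cdot\rangle$ at $q$.
\end{itemize}
To shorten notation, write $a_{ij} := (\nabla_{e_i}\mathcal{H})(e_j,e_1)$ and $c_{ij} := a_{ij}/(\lambda - \lambda_j)$ for $j > m$, so that $\nabla_{e_i}V = \sum_{j>m} c_{ij} e_j$ at $q$.

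\textbf{First-order term.} For any vector $X$ we have $\nabla_X h = (\nabla_X\mathcal{H})(V,V) + 2\mathcal{H}(\nabla_X V, V)$. At $q$, $V = e_1$ is a $\lambda$-eigenvector and $\nabla_X V$ lies in the span of $e_{m+1},\dots,e_n$, so $\mathcal{H}(\nabla_X V, e_1) = \lambda\langle \nabla_X V, e_1\rangle = 0$. Hence $\nabla_{\nabla s} h = (\nabla_{\nabla s}\mathcal{H})(e_1,e_1)$ at $q$, which produces the drift term on the right-hand side.

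\textbf{Second-order term.} Differentiating twice gives, at $q$,
\[
\Delta h = (\Delta\mathcal{H})(e_1,e_1) + 4\sum_i (\nabla_{e_i}\mathcal{H})(\nabla_{e_i}V, e_1) + 2\sum_i \mathcal{H}(\nabla_{e_i}V,\nabla_{e_i}V) + 2\,\mathcal{H}(\Delta V, e_1).
\]
The first term is the one we want. For the other three:
\begin{itemize}
\item The cross term equals $4\sum_i\sum_{j>m} c_{ij}a_{ij} = 4\sum_i\sum_{j>m} a_{ij}^2/(\lambda-\lambda_j)$.
\item The third term equals $2\sum_i\sum_{j>m}\lambda_j c_{ij}^2$.
\item For the last term, the second-order information on $V$ is not prescribed, and it is removed using only the unit length of $V$. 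Differentiating $\langle V,V\rangle = 1$ twice gives $\langle \Delta V, V\rangle = -\sum_i\|\nabla_{e_i}V\|^2$. Since $e_1$ is an eigenvector, $\mathcal{H}(\Delta V, e_1) = \lambda\langle\Delta V, e_1\rangle = -\lambda\sum_{i,j>m} c_{ij}^2$.
\end{itemize}
Combining the last two items gives $-2\sum_{i,j>m}(\lambda-\lambda_j)c_{ij}^2 = -2\sum_{i,j>m} a_{ij}^2/(\lambda-\lambda_j)$. Added to the cross term, this leaves exactly $2\sum_{i}\sum_{j>m} a_{ij}^2/(\lambda-\lambda_j)$.

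Subtracting $\frac{2n}{s}$ times the first-order identity then yields \eqref{eq: laplacian of barrier test function}. The main point requiring care is the $\Delta V$ term: one must not attempt to compute $\Delta V$ from the explicit formula \eqref{eq: formal definition of vector field for barrier function}. Instead one uses only the normalization $\|V\| = 1$ together with the fact that $\mathcal{H}(\cdot, e_1)$ is proportional to $\langle\cdot, e_1\rangle$ at $q$. One must also check that the Christoffel-type corrections in $\Delta$ vanish at $q$, which is guaranteed by the choice $\nabla E_i|_q = 0$.
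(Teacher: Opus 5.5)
Your proposal is correct and follows the paper's proof essentially step for step. The paper uses the same Leibniz expansion of $h = \mathcal{H}(V,V)$ in a frame with $\nabla E_i|_q = 0$, the same vanishing of $\mathcal{H}(\nabla_{\nabla s} V, e_1)$ because $\nabla V|_q$ is orthogonal to the $\lambda$-eigenspace, and the same elimination of the $2\mathcal{H}(\Delta V, e_1)$ term through $\Delta \|V\|^2 = 0$ and the eigenvector relation at $e_1$.
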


\begin{proof}
Let $(E_1, \cdots, E_n)$ be a local orthonormal frame for $M$ near $q$, with $E_i|_q = e_i$ and $\nabla E_i|_q = 0$ for all $i \in \{1, \cdots, n\}$.
By the product rule, for every vector field $X$ on $M$, we have
\[
\nabla_X h = (\nabla_X \mathcal{H})(V, V) + 2 \mathcal{H}(\nabla_X V, V)
\]
and differentiating this equation, we also obtain that, at the point $q$,
\[
\Delta h \equiv \sum_{i = 1}^n \nabla_{e_i} \nabla_{e_i} h = (\Delta \mathcal{H})(e_1, e_1) + 2 \mathcal{H} (\Delta V, e_1) + 4 \sum_{i = 1}^n (\nabla_{e_i} \mathcal{H})(\nabla_{e_i} V, e_1) + 2 \sum_{i = 1}^n \mathcal{H}(\nabla_{e_i} V, \nabla_{e_i} V).
\]
It follows that, at $q$,
\allowdisplaybreaks
\begin{align*}
    \Delta h - \frac{2n}{s} \nabla_{\nabla s} h &= (\Delta \mathcal{H})(e_1, e_1) + 2 \mathcal{H} (\Delta V, e_1) + 4 \sum_{i = 1}^n (\nabla_{e_i} \mathcal{H})(\nabla_{e_i} V, e_1) + 2 \sum_{i = 1}^n \mathcal{H}(\nabla_{e_i} V, \nabla_{e_i} V) \\
    &\quad \, - \frac{2n}{s} (\nabla_{\nabla s} \mathcal{H})(e_1, e_1) - \frac{4n}{s} \mathcal{H}(\nabla_{\nabla s} V, e_1).
\end{align*}
By construction of the vector field $V$ (cf.~\eqref{eq: derivative of the vector field for barrier function}), we have that $\nabla_w V \perp \mathrm{span}(e_1, \cdots, e_m)$ for all $w \in T_q M$, and so $\mathcal{H}(\nabla_{\nabla s} V, e_1) = 0$.
We thus have that, at the point $q$,
\allowdisplaybreaks
\begin{equation} \label{eq: laplace of barrier function before tensor laplacian computation part 1}
    \begin{split}
        \Delta h - \frac{2n}{s} \nabla_{\nabla s} h &= (\Delta \mathcal{H})(e_1, e_1) - \frac{2n}{s} (\nabla_{\nabla s} \mathcal{H})(e_1, e_1) \\
        &\quad \, + 2 \mathcal{H} (\Delta V, e_1) + 4 \sum_{i = 1}^n (\nabla_{e_i} \mathcal{H})(\nabla_{e_i} V, e_1) + 2 \sum_{i = 1}^n \mathcal{H}(\nabla_{e_i} V, \nabla_{e_i} V).
    \end{split}
\end{equation}

Let us now explicitly compute the last three terms on the right-hand side of \eqref{eq: laplace of barrier function before tensor laplacian computation part 1}.
Using \eqref{eq: derivative of the vector field for barrier function}, we immediately see that
\begin{equation} \label{eq: laplace of barrier function before tensor laplacian computation part 2}
    4 \sum_{i = 1}^n (\nabla_{e_i} \mathcal{H})(\nabla_{e_i} V, e_1) = 4 \sum_{i = 1}^n \sum_{j = m + 1}^n \frac{\big( (\nabla_{e_i} \mathcal{H})(e_j, e_1) \big)^2}{\lambda - \lambda_j}
\end{equation}
and, since $\mathcal{H}(e_j, e_k) = \delta_{jk} \lambda_k$, that
\begin{equation} \label{eq: laplace of barrier function before tensor laplacian computation part 3}
    2 \sum_{i = 1}^n \mathcal{H}(\nabla_{e_i} V, \nabla_{e_i} V) = 2 \sum_{i = 1}^n \sum_{j = m + 1}^n \lambda_j \left( \frac{(\nabla_{e_i} \mathcal{H})(e_j, e_1)}{\lambda - \lambda_j} \right)^2.
\end{equation}
To compute $2 \mathcal{H} (\Delta V, e_1)$, note that, by the product rule, we have $0 = \Delta \|V\|^2 = 2\langle \Delta V, V \rangle + 2 \|\nabla V\|^2$, and thus, by \eqref{eq: derivative of the vector field for barrier function}, at the point $q$,
\[
2\langle \Delta V, e_1 \rangle = - 2 \|\nabla V|_q\|^2 = - 2 \sum_{i = 1}^n \sum_{j = m + 1}^n \left( \frac{(\nabla_{e_i} \mathcal{H})(e_j, e_1)}{\lambda - \lambda_j} \right)^2,
\]
so
\begin{equation} \label{eq: laplace of barrier function before tensor laplacian computation part 4}
    2\mathcal{H}(\Delta V, e_1) = -2 \sum_{i = 1}^n \sum_{j = m + 1}^n \lambda \left( \frac{(\nabla_{e_i} \mathcal{H})(e_j, e_1)}{\lambda - \lambda_j} \right)^2.
\end{equation}
Combining \eqref{eq: laplace of barrier function before tensor laplacian computation part 2}, \eqref{eq: laplace of barrier function before tensor laplacian computation part 3}, and \eqref{eq: laplace of barrier function before tensor laplacian computation part 4}, we conclude that
\[
2 \mathcal{H} (\Delta V, e_1) + 4 \sum_{i = 1}^n (\nabla_{e_i} \mathcal{H})(\nabla_{e_i} V, e_1) + 2 \sum_{i = 1}^n \mathcal{H}(\nabla_{e_i} V, \nabla_{e_i} V) = 2 \sum_{i = 1}^n \sum_{j = m + 1}^n \frac{\big( (\nabla_{e_i} \mathcal{H})(e_j, e_1) \big)^2}{\lambda - \lambda_j}.
\]
The proof is now finished by inserting this preceding identity into \eqref{eq: laplace of barrier function before tensor laplacian computation part 1}.
\end{proof}

The next step in the proof of Theorem \ref{intro-th: sharp hessian estimate} is to compute the expression for $\Delta \mathcal{H} \equiv \Delta\!\hess{s^2} + k (\Delta s^2) g$.
For this, we recall that $\Delta\!\hess{s^2}$ is related to $\hess{\Delta s^2}$ via the action of the Lichnerowicz Laplacian on two-tensors (cf.~{\cite[Equation (1.180b)]{BesseRG}} for its definition); since $(M, g)$ is Einstein (and in particular, $\nabla\!\ric = 0$), this gives us the explicit formula
\begin{equation} \label{eq: commuting Hessian with Laplacian}
    \Delta\!\hess{s^2} = \hess{\Delta s^2} - 2 Rm(\hess{s^2}) + \ric \circ \hess{s^2} + \hess{s^2} \circ \ric,
\end{equation}
where $Rm(\hess{s^2})$ is defined via the action of the Riemann tensor on symmetric, covariant two-tensors as $Rm(\hess{s^2})(X, Y) := \langle \hess{s^2}, Rm(\cdot, X, Y, \cdot) \rangle$ for vector fields $X$ and $Y$ on $M$; a proof of this identity may be found in {\cite[Lemma 2.33]{ChowRicciFlow}} or in {\cite[Lemma 3.1]{ParkHessianEstimate}}.
Combining \eqref{eq: commuting Hessian with Laplacian} with \eqref{eq: Delta s squared}, it follows that
\begin{equation} \label{eq: Delta Hess s squared}
    \Delta\!\hess{s^2} = 2n \hess{\|\nabla s\|^2} - \frac{nk}{2} \hess{s^2} - 2 Rm(\hess{s^2}) + \ric \circ \hess{s^2} + \hess{s^2} \circ \ric.
\end{equation}

We now want to rewrite the right-hand side of \eqref{eq: Delta Hess s squared} in terms of $\mathcal{H}$ and the Riemann curvature tensor of $(M, g)$.
We start with the term $\hess{\|\nabla s\|^2}$; for this, we employ the ``Bochner formula'' for the Hessian of $\|\nabla s\|^2$ and the chain rule.

\begin{lemma} \label{lemma: hessian of gradient of s squared in terms of H}
On $M \setminus \{p\}$, we have
\begin{equation} \label{eq: hessian of gradient of s squared in terms of H}
    \begin{split}
        \hess{\|\nabla s\|^2} &= \frac{1}{s} \nabla_{\nabla s} \mathcal{H} + \frac{1}{2s^2} \mathcal{H}^2 - \frac{1}{s^2} \left( \|\nabla s\|^2 + k s^2 \right) \mathcal{H} + \frac{8}{s^2} \left( \|\nabla s\|^2 + \frac{k}{2} s^2 \right) ds \otimes ds \\
        &\quad \, - \frac{2}{s^2} \left( \mathcal{H} \circ (ds \otimes ds) + (ds \otimes ds) \circ \mathcal{H} \right) - k \left( \|\nabla s\|^2 - \frac{k}{2} s^2 \right) g + 2 Rm(\cdot, \nabla s, \nabla s, \cdot),
    \end{split}
\end{equation}
where $\mathcal{H}^2 := \mathcal{H} \circ \mathcal{H}$.
\end{lemma}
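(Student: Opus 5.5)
Write $u := s^2$, so that $\mathcal{H} = \hess u + k u g$ and $\|\nabla s\|^2 = \|\nabla u\|^2/(4u)$. The identity is purely pointwise and uses only the Ricci-independent Bochner-type formula for the Hessian of a squared gradient together with the equation for $\Delta s^2$ from \eqref{eq: Delta s squared}. The Einstein condition is not needed here. The plan is:

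1. Compute $\hess{\|\nabla s\|^2}$ by writing $\|\nabla s\|^2 = \tfrac14 u^{-1}\|\nabla u\|^2$ and applying the product rule. This produces three groups of terms:
 \begin{itemize}
 \item $\tfrac14 u^{-1}\hess{\|\nabla u\|^2}$;
 \item cross terms $-\tfrac14 u^{-2}\big(du\otimes d\|\nabla u\|^2 + d\|\nabla u\|^2\otimes du\big)$;
 \item the term $\tfrac14\|\nabla u\|^2\hess{u^{-1}}$, where $\hess{u^{-1}} = -u^{-2}\hess u + 2u^{-3}du\otimes du$.
 \end{itemize}

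2. Expand $\hess{\|\nabla u\|^2}$ via the standard tensorial identity
 \[
 \tfrac12\hess{\|\nabla u\|^2}(X,Y) = (\nabla_X\nabla_Y\nabla u)\cdot\nabla u + \langle\nabla_X\nabla u,\nabla_Y\nabla u\rangle .
 \]
 Commute third derivatives with the Ricci identity,
 \[
 \nabla^3u(X,Y,\nabla u) = \nabla^3u(\nabla u,X,Y) + Rm(\cdot,\cdot,\cdot,\cdot),
 \]
 which yields $\nabla_{\nabla u}\hess u + (\hess u)^2 + Rm(\cdot,\nabla u,\nabla u,\cdot)$, up to the sign convention fixed in \eqref{sec: sectional curvature convention}. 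I will check that sign on the round sphere at the end. Similarly, $d\|\nabla u\|^2 = 2\hess u(\nabla u,\cdot)$, so the cross terms become $-u^{-2}\big(\hess u\circ(du\otimes du)$ symmetrized$\big)/\ldots$.

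3. Substitute $\hess u = \mathcal{H} - kug$, $\nabla u = 2s\nabla s$, $du\otimes du = 4s^2\,ds\otimes ds$ and $\|\nabla u\|^2 = 4s^2\|\nabla s\|^2$, and expand everything in $\mathcal{H}$. The covariant derivative term becomes
 \[
 \nabla_{\nabla u}\hess u = 2s\nabla_{\nabla s}\mathcal{H} - k\,\langle\nabla u,\nabla u\rangle g ,
 \]
 since $\nabla_{\nabla u}(kug) = k\|\nabla u\|^2g$. Together with the prefactor $\tfrac12\cdot\tfrac14 u^{-1}\cdot 2$, this gives $\tfrac1s\nabla_{\nabla s}\mathcal{H}$. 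The $(\hess u)^2$ term expands to
 \[
 \mathcal{H}^2 - 2ku\mathcal{H} + k^2u^2g ,
 \]
 which with prefactor $\tfrac{1}{2s^2}$ gives $\tfrac{1}{2s^2}\mathcal{H}^2 - k\mathcal{H} + \tfrac{k^2s^2}{2}g$. The curvature term, with prefactor $\tfrac{1}{2u}\cdot 4s^2$, gives $2Rm(\cdot,\nabla s,\nabla s,\cdot)$.

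4. Collect the remaining pieces.
 \begin{itemize}
 \item The $\hess{u^{-1}}$ part gives $-\|\nabla s\|^2 s^{-2}\mathcal{H} + k\|\nabla s\|^2 g + 8s^{-2}\|\nabla s\|^2\,ds\otimes ds$.
 \item The cross terms give $-\tfrac{2}{s^2}\big(\mathcal{H}\circ(ds\otimes ds) + (ds\otimes ds)\circ\mathcal{H}\big) + 4k\,ds\otimes ds$, using $\hess u(\nabla u,\cdot) = \big(\hess u\circ\nabla u\big)^\flat$.
 \end{itemize}
 Summing, the $ds\otimes ds$ coefficient should become $\tfrac{8}{s^2}\big(\|\nabla s\|^2 + \tfrac k2 s^2\big)$, the $\mathcal{H}$ coefficient $-\tfrac{1}{s^2}(\|\nabla s\|^2 + ks^2)$, and the $g$ coefficient $-k\big(\|\nabla s\|^2 - \tfrac k2 s^2\big)$. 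The $g$ coefficient arises as $-2k\|\nabla s\|^2$ from the derivative term, $+\tfrac{k^2s^2}{2}$ from the square, and $+k\|\nabla s\|^2$ from the $u^{-1}$ term.

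The main obstacle is purely bookkeeping: getting the signs and constants right in steps 2–4. In particular, the $g$ coefficient and the curvature sign depend on the conventions in \eqref{sec: sectional curvature convention}. As a consistency check, I will verify the final formula on the round sphere, where $\mathcal{H}=2g$ and $\|\nabla s\|^2 + ks^2/4 = 1$, and where $\|\nabla s\|^2$ is explicitly a function of the distance.
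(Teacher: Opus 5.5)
Your proposal is correct, but it uses a different decomposition from the paper's. With $F=\|\nabla u\|^2$, the Bochner term is $\tfrac14 u^{-1}\hess F=\tfrac{1}{2u}\bigl(\nabla_{\nabla u}\hess u+(\hess u)^2+Rm(\cdot,\nabla u,\nabla u,\cdot)\bigr)$. The curvature term carries a plus sign under the paper's conventions, so no round-sphere check is needed.

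There is one slip. The prefactor is $\tfrac14u^{-1}\cdot 2=\tfrac{1}{2u}$, not the $\tfrac12\cdot\tfrac14u^{-1}\cdot 2$ you wrote. However, your stated outputs $\tfrac1s\nabla_{\nabla s}\mathcal{H}$, $\tfrac{1}{2s^2}\mathcal{H}^2-k\mathcal{H}+\tfrac{k^2s^2}{2}g$ and $2Rm(\cdot,\nabla s,\nabla s,\cdot)$ are the ones for $\tfrac{1}{2u}$, so this is only a typo. The terms from $\hess(u^{-1})$ and the cross terms come out exactly as you list them. The coefficients of $g$, $\mathcal{H}$ and $ds\otimes ds$ then sum to those in the statement.

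The paper instead applies the Riccati/Bochner identity to $s$ itself, $\tfrac12\hess{\|\nabla s\|^2}=\nabla_{\nabla s}\hess s+(\hess s)^2+Rm(\cdot,\nabla s,\nabla s,\cdot)$. It then substitutes $\hess s=\tfrac{1}{2s}\mathcal{H}-\tfrac k2 s g-\tfrac1s\,ds\otimes ds$. This needs no quotient rule, but it must differentiate a three-term expression along $\nabla s$ and square a three-term tensor. In particular, $\nabla_{\nabla s}(ds\otimes ds)$ produces $\hess s(\nabla s,\cdot)\otimes ds$, which requires a second substitution.

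You apply Bochner to $u=s^2$, whose Hessian is the two-term expression $\mathcal{H}-kug$, so $\nabla_{\nabla u}\hess u$ and $(\hess u)^2$ are immediate. The remaining work moves into explicit product-rule terms coming from $\|\nabla s\|^2=\|\nabla u\|^2/(4u)$, and no derivative of $ds\otimes ds$ is ever needed. Neither route uses the Einstein condition, and the total bookkeeping is comparable.
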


\begin{proof}
By the ``Bochner formula'' (the Riccati equation) for the Hessian of $\|\nabla s\|^2$ (cf.~{\cite[Theorem 3.2.2]{PetersenRG}}), we have
\begin{equation} \label{eq: bochner formula at hessian level for s}
    \frac{1}{2} \hess{\|\nabla s\|^2} = \nabla_{\nabla s} \hess{s} + \left( \hess{s} \right)^2 + Rm(\cdot, \nabla s, \nabla s, \cdot),
\end{equation}
where $(\hess{s})^2 := \hess{s} \circ \hess{s}$.
We now rewrite the first two terms on the right-hand side of \eqref{eq: bochner formula at hessian level for s} in terms of $\mathcal{H}$.
For the first one, since
\begin{equation} \label{eq: hessian of s in terms of hessian of s squared}
    \hess{s} = \frac{1}{2s} \hess{s^2} - \frac{1}{s} ds \otimes ds = \frac{1}{2s} \mathcal{H} - \frac{k}{2} s g - \frac{1}{s} ds \otimes ds,
\end{equation}
it follows that
\allowdisplaybreaks
\begin{align} \label{align: nabla nabla s hess s in terms of Hess s squared part 1}
    \nabla_{\nabla s} \hess{s} &= \nabla_{\nabla s} \left( \frac{1}{2s} \mathcal{H} - \frac{k}{2} s g - \frac{1}{s} ds \otimes ds \right) \nonumber \\
    &= \frac{1}{2s} \nabla_{\nabla s} \mathcal{H} - \frac{\|\nabla s\|^2}{2s^2} \mathcal{H} - \frac{k}{2} \|\nabla s\|^2 g + \frac{\|\nabla s\|^2}{s^2} ds \otimes ds - \frac{1}{s} \left( \nabla_{\nabla s} ds \otimes ds + ds \otimes \nabla_{\nabla s} ds \right) \nonumber \\
    &= \frac{1}{2s} \nabla_{\nabla s} \mathcal{H} - \frac{\|\nabla s\|^2}{2s^2} \mathcal{H} - \frac{k}{2} \|\nabla s\|^2 g + \frac{\|\nabla s\|^2}{s^2} ds \otimes ds \nonumber \\
    &\quad \, - \frac{1}{s} \left( \hess{s}(\nabla s, \cdot) \otimes ds + ds \otimes \hess{s}(\nabla s, \cdot) \right),
\end{align}
where the last equality follows because $\nabla ds \equiv \hess{s}$, so $\nabla_{\nabla s} ds = \hess{s}(\nabla s, \cdot)$.
Now, using \eqref{eq: hessian of s in terms of hessian of s squared} again, we may further compute
\allowdisplaybreaks
\begin{align*}
    \hess{s}(\nabla s, \cdot) \otimes ds &= \left( \frac{1}{2s} \mathcal{H}(\nabla s, \cdot) - \frac{k}{2} s \, ds - \frac{\|\nabla s\|^2}{s} ds \right) \otimes ds \\
    &= \frac{1}{2s} \mathcal{H}(\nabla s, \cdot) \otimes ds - \frac{k}{2} s \, ds \otimes ds - \frac{\|\nabla s\|^2}{s} ds \otimes ds,
\end{align*}
and one may directly check that $\mathcal{H}(\nabla s, \cdot) \otimes ds = (ds \otimes ds) \circ \mathcal{H}$, hence
\begin{equation} \label{eq: hess s otimes ds in terms of s squared}
    \hess{s}(\nabla s, \cdot) \otimes ds = \frac{1}{2s} (ds \otimes ds) \circ \mathcal{H} - \frac{k}{2} s \, ds \otimes ds - \frac{\|\nabla s\|^2}{s} ds \otimes ds.
\end{equation}
A similar computation holds for $ds \otimes \hess{s}(\nabla s, \cdot)$; combining \eqref{eq: hess s otimes ds in terms of s squared} with \eqref{align: nabla nabla s hess s in terms of Hess s squared part 1} and grouping the common terms, we obtain that
\begin{equation} \label{eq: nabla nabla s hess s in terms of hess s squared}
    \begin{split}
        \nabla_{\nabla s} \hess{s} &= \frac{1}{2s} \nabla_{\nabla s} \mathcal{H} - \frac{\|\nabla s\|^2}{2s^2} \mathcal{H} - \frac{k}{2} \|\nabla s\|^2 g + k \, ds \otimes ds + \frac{3 \|\nabla s\|^2}{s^2} ds \otimes ds \\
        &\quad \, - \frac{1}{2s^2} \left( \mathcal{H} \circ (ds \otimes ds) + (ds \otimes ds) \circ \mathcal{H} \right).
    \end{split}
\end{equation}

The second term from the right-hand side of \eqref{eq: bochner formula at hessian level for s} may be rewritten using \eqref{eq: hessian of s in terms of hessian of s squared} as
\allowdisplaybreaks
\begin{align} \label{align: square of hess s in terms of hess s squared}
    (\hess{s})^2 &= \left( \frac{1}{2s} \mathcal{H} - \frac{k}{2} s g - \frac{1}{s} ds \otimes ds \right)^2 \nonumber \\
    &= \frac{1}{4s^2} \mathcal{H}^2 + \frac{k^2}{4} s^2 g + \frac{\|\nabla s\|^2}{s^2} ds \otimes ds - \frac{k}{2} \mathcal{H} \nonumber \\
    &\quad \, + k \, ds \otimes ds - \frac{1}{2s^2} \left( \mathcal{H} \circ (ds \otimes ds) + (ds \otimes ds) \circ \mathcal{H} \right),
\end{align}
where the last equality follows because $(ds \otimes ds)^2 = \|\nabla s\|^2 ds \otimes ds$.
The proof of the desired identity is now completed by inserting \eqref{eq: nabla nabla s hess s in terms of hess s squared} and \eqref{align: square of hess s in terms of hess s squared} into \eqref{eq: bochner formula at hessian level for s}.
\end{proof}

Combining \eqref{eq: hessian of gradient of s squared in terms of H} from Lemma \ref{lemma: hessian of gradient of s squared in terms of H} with \eqref{eq: commuting Hessian with Laplacian} and rewriting the remaining terms using $\mathcal{H}$, we obtain the second-order differential equation that $\mathcal{H}$ satisfies.

\begin{corollary} \label{cllr: differential equation satisfied by H}
On $M \setminus \{p\}$, we have
\begin{equation} \label{eq: second order equation satisfied by H}
    \begin{split}
        \Delta \mathcal{H} - \frac{2n}{s} \nabla_{\nabla s} \mathcal{H} &= 4n Rm(\cdot, \nabla s, \nabla s, \cdot) - 2 Rm(\mathcal{H}) + \ric \circ \mathcal{H} + \mathcal{H} \circ \ric \\
        &\quad \, + \frac{n}{s^2} \mathcal{H}^2 - \frac{1}{s^2} \left( 2n \|\nabla s\|^2 + \frac{5nk}{2} s^2 \right) \mathcal{H} \\
        &\quad \, - \frac{4n}{s^2} \left( \mathcal{H} \circ (ds \otimes ds) + (ds \otimes ds) \circ \mathcal{H} \right) \\
        &\quad \, + \frac{16n}{s^2} \left( \|\nabla s\|^2 + \frac{k}{2} s^2 \right) ds \otimes ds + n k^2 s^2 g,
    \end{split}
\end{equation}
where $\mathcal{H}^2 := \mathcal{H} \circ \mathcal{H}$.
\end{corollary}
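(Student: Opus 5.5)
Corollary \ref{cllr: differential equation satisfied by H} is a direct algebraic assembly of the identities already established, so the plan is a bookkeeping argument. We start from $\Delta \mathcal{H} = \Delta\!\hess{s^2} + k (\Delta s^2) g$, using that $g$ is parallel. For the first term we substitute \eqref{eq: Delta Hess s squared}. For the second we use \eqref{eq: Delta s squared} to write $k(\Delta s^2) g = 2nk\|\nabla s\|^2 g - \tfrac{nk^2}{2} s^2 g$.

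In \eqref{eq: Delta Hess s squared}, the term $2n\hess{\|\nabla s\|^2}$ is then replaced by $2n$ times the right-hand side of \eqref{eq: hessian of gradient of s squared in terms of H} from Lemma \ref{lemma: hessian of gradient of s squared in terms of H}. This produces:
\begin{itemize}
\item the transport term $\tfrac{2n}{s}\nabla_{\nabla s}\mathcal{H}$, which we move to the left-hand side;
\item $\tfrac{n}{s^2}\mathcal{H}^2$ and $-\tfrac{2n}{s^2}(\|\nabla s\|^2 + ks^2)\mathcal{H}$;
\item $\tfrac{16n}{s^2}(\|\nabla s\|^2+\tfrac k2 s^2)\, ds\otimes ds$;
\item $-\tfrac{4n}{s^2}(\mathcal{H}\circ(ds\otimes ds)+(ds\otimes ds)\circ\mathcal{H})$;
\item $-2nk(\|\nabla s\|^2-\tfrac k2 s^2)g$ and $4nRm(\cdot,\nabla s,\nabla s,\cdot)$.
\end{itemize}

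Next we rewrite every remaining occurrence of $\hess{s^2}$ as $\mathcal{H} - ks^2 g$. The term $-\tfrac{nk}{2}\hess{s^2}$ becomes $-\tfrac{nk}{2}\mathcal{H} + \tfrac{nk^2}{2}s^2 g$, which combines with $-\tfrac{2nk}{1}\mathcal{H}$ to give the coefficient $-\tfrac{1}{s^2}(2n\|\nabla s\|^2 + \tfrac{5nk}{2}s^2)$ on $\mathcal{H}$.

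For the curvature terms we use the Einstein condition $\ric = (n-1)\kappa g$ for some constant $\kappa$. We have $\ric\circ\hess{s^2}+\hess{s^2}\circ\ric = \ric\circ\mathcal{H}+\mathcal{H}\circ\ric - 2ks^2\ric$. Also $Rm(g)(X,Y) = \sum_i Rm(e_i,X,Y,e_i) = \ric(X,Y)$ with the stated sign convention, so $-2Rm(\hess{s^2}) = -2Rm(\mathcal{H}) + 2ks^2\ric$. The two $ks^2\ric$ contributions cancel exactly. I expect this to be the only step requiring care: the sign convention for $Rm(\cdot)$ acting on $g$ must be verified against \eqref{sec: sectional curvature convention}.

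Finally we collect the multiples of $g$:
\[
2nk\|\nabla s\|^2 - \tfrac{nk^2}{2}s^2 - 2nk\|\nabla s\|^2 + nk^2 s^2 + \tfrac{nk^2}{2}s^2 = nk^2 s^2 .
\]
This matches the last term of \eqref{eq: second order equation satisfied by H} and completes the verification. The main obstacle is purely arithmetic consistency of the coefficients, especially the $\mathcal{H}$ and $g$ coefficients, which I would double-check term by term.
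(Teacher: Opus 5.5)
Your proposal is correct and follows the paper's proof essentially step for step. You expand $\Delta\mathcal{H}$ via \eqref{eq: Delta s squared}, substitute Lemma~\ref{lemma: hessian of gradient of s squared in terms of H} into \eqref{eq: Delta Hess s squared}, rewrite $\hess{s^2} = \mathcal{H} - ks^2 g$ so that the $ks^2\ric$ terms cancel via $Rm(g) = \ric$, and collect the $\mathcal{H}$- and $g$-coefficients, all of which you computed correctly (the only remark being that the Einstein condition plays no role in that cancellation itself; it enters only through \eqref{eq: commuting Hessian with Laplacian}).
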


\begin{proof}
By \eqref{eq: Delta s squared}, we have
\begin{equation} \label{eq: Delta H part 1}
    \Delta \mathcal{H} \equiv \Delta( \hess{s^2} + ks^2 g) = \Delta \! \hess{s^2} + 2nk \|\nabla s\|^2 g - \frac{nk^2}{2} s^2 g.
\end{equation}
Now, we rewrite every term on the right-hand side of \eqref{eq: Delta Hess s squared} in terms of $\mathcal{H}$ and the curvature tensor of $(M, g)$.
The first term is handled by Lemma \ref{lemma: hessian of gradient of s squared in terms of H}.
For the others, we have
\begin{equation} \label{eq: Delta H part 2}
    -2 Rm(\hess{s^2}) = - 2 Rm(\mathcal{H} - ks^2 g) = - 2 Rm(H) + 2k s^2 \ric,
\end{equation}
where the last equality follows because $Rm(g) \equiv \ric$,
\begin{equation} \label{eq: Delta H part 3}
    \ric \circ \hess{s^2} = \ric \circ (\mathcal{H} - ks^2 g) = \ric \circ \mathcal{H} - ks^2 \ric,
\end{equation}
and similarly, $\hess{s^2} \circ \ric = \mathcal{H} \circ \ric - k s^2 \ric$.
Combining \eqref{eq: Delta H part 2} with \eqref{eq: Delta H part 3}, it follows that
\[
- 2 Rm(\hess{s^2}) + \ric \circ \hess{s^2} + \hess{s^2} \circ \ric = - 2 Rm(\mathcal{H}) + \ric \circ \mathcal{H} + \mathcal{H} \circ \ric.
\]
Inserting this preceding equation and \eqref{eq: hessian of gradient of s squared in terms of H} from Lemma \ref{lemma: hessian of gradient of s squared in terms of H} into \eqref{eq: Delta Hess s squared}, we obtain that
\begin{equation} \label{eq: Delta H part 4}
    \begin{split}
        \Delta \! \hess{s^2} &= 4n Rm(\cdot, \nabla s, \nabla s, \cdot) - 2 Rm(\mathcal{H}) + \ric \circ \mathcal{H} + \mathcal{H} \circ \ric \\
        &\quad \, + \frac{2n}{s} \nabla_{\nabla s} \mathcal{H} + \frac{n}{s^2} \mathcal{H}^2 - \frac{1}{s^2} \left( 2n \|\nabla s\|^2 + \frac{5nk}{2} s^2 \right) \mathcal{H} \\
        &\quad \, - \frac{4n}{s^2} \left( \mathcal{H} \circ (ds \otimes ds) + (ds \otimes ds) \circ \mathcal{H} \right) \\
        &\quad \, + \frac{16n}{s^2} \left( \|\nabla s\|^2 + \frac{k}{2} s^2 \right) ds \otimes ds - 2 nk \|\nabla s\|^2 g + \frac{3nk^2}{2} s^2 g.
    \end{split}
\end{equation}
The proof of this corollary is now completed by combining \eqref{eq: Delta H part 4} with \eqref{eq: Delta H part 1}.
\end{proof}

Let us now further rewrite the first two curvature terms on the right-hand side of \eqref{eq: second order equation satisfied by H} from Corollary \ref{cllr: differential equation satisfied by H} so as to better show how the sectional curvature lower bound emerges in the proof of Theorem \ref{intro-th: sharp hessian estimate}.
Recall that, if the sectional curvature of $(M, g)$ is constant, equal to $k$, then $Rm = k g \kn g$, where $\kn$ denotes the Kulkarni-Nomizu product; this would imply that $Rm(\cdot, \nabla s, \nabla s, \cdot) = k(\|\nabla s\|^2 g - ds \otimes ds)$ and that $Rm(T) = k \tr(T) g - kT$ for all symmetric, covariant two-tensors $T$.
In general, since we only assume that $\sec \geq k$, let us define
\begin{equation} \label{eq: excess curvature tensor in the direction of nabla s}
    \widehat{Rm}_{\nabla s} := Rm(\cdot, \nabla s, \nabla s, \cdot) - k \|\nabla s\|^2 g + k \, ds \otimes ds
\end{equation}
and
\begin{equation} \label{eq: excess curvature action on two tensors}
    \widehat{Rm}(T) := Rm(T) - k \tr(T) g + k T,
\end{equation}
for a symmetric, covariant two-tensor $T$ on $M$; these quantities measure the excess sectional curvature in the direction of $\nabla s$ and with respect to the action of the Riemann tensor on symmetric, covariant two-tensors.
Since $\tr(\mathcal{H}) = \Delta s^2 + nks^2 = 2n \|\nabla s\|^2 + nks^2/2$, using \eqref{eq: excess curvature tensor in the direction of nabla s} and \eqref{eq: excess curvature action on two tensors}, we may rewrite
\allowdisplaybreaks
\begin{align} \label{align: excess curvature terms in Delta H}
    4n Rm(\cdot, \nabla s, \nabla s, \cdot) - 2 Rm(\mathcal{H}) &= 4n \widehat{Rm}_{\nabla s} + 4nk \|\nabla s\|^2 g - 4nk \, ds \otimes ds \nonumber \\
    &\quad \, - 2 \widehat{Rm}(\mathcal{H}) - 4nk \|\nabla s\|^2 g - nk^2 s^2 g + 2k \mathcal{H} \nonumber \\
    &= 4n \widehat{Rm}_{\nabla s} - 2 \widehat{Rm}(\mathcal{H}) + 2k \mathcal{H} - nk^2 s^2 g - 4 nk \, ds \otimes ds.
\end{align}

We may then obtain the following final rewriting of the elliptic equation satisfied by $\mathcal{H}$.

\begin{lemma} \label{lemma: final form of Delta H}
On $M \setminus \{p\}$, we have
\begin{equation} \label{eq: final form of Delta H}
    \begin{split}
        \Delta \mathcal{H} - \frac{2n}{s} \nabla_{\nabla s} \mathcal{H} &= 4n \widehat{Rm}_{\nabla s} - 2 \widehat{Rm}(\mathcal{H}) + \ric \circ \mathcal{H} + \mathcal{H} \circ \ric - 2(n - 1)k \mathcal{H} \\
        &\quad \, + \frac{n}{s^2} \left(\mathcal{H} - 4 \, ds \otimes ds \right)^2 - \frac{2n}{s^2} \left( \|\nabla s\|^2 + \frac{k}{4} s^2 \right) \mathcal{H} + 4 n k \, ds \otimes ds.
    \end{split}
\end{equation}
\end{lemma}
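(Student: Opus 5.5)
The identity in Lemma \ref{lemma: final form of Delta H} should follow by purely algebraic regrouping of Corollary \ref{cllr: differential equation satisfied by H}, with no further analysis. I will start from \eqref{eq: second order equation satisfied by H} and replace the two curvature terms $4n Rm(\cdot, \nabla s, \nabla s, \cdot) - 2 Rm(\mathcal{H})$ by the expression obtained in \eqref{align: excess curvature terms in Delta H}, namely
\[
4n \widehat{Rm}_{\nabla s} - 2 \widehat{Rm}(\mathcal{H}) + 2k \mathcal{H} - nk^2 s^2 g - 4nk \, ds \otimes ds.
\]
The term $-nk^2 s^2 g$ produced here exactly cancels the term $+nk^2s^2 g$ at the end of \eqref{eq: second order equation satisfied by H}. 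The terms $\ric \circ \mathcal{H} + \mathcal{H} \circ \ric$ are carried along unchanged.

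Next, I will recognize a perfect square in the quadratic terms. Using the identity $(ds \otimes ds)^2 = \|\nabla s\|^2 \, ds \otimes ds$ (already used in the proof of Lemma \ref{lemma: hessian of gradient of s squared in terms of H}), one has
\[
(\mathcal{H} - 4 \, ds \otimes ds)^2 = \mathcal{H}^2 - 4\left( \mathcal{H} \circ (ds \otimes ds) + (ds \otimes ds) \circ \mathcal{H} \right) + 16 \|\nabla s\|^2 \, ds \otimes ds.
\]
So $\frac{n}{s^2}(\mathcal{H} - 4 \, ds\otimes ds)^2$ absorbs the $\mathcal{H}^2$ term, the cross terms, and the part $\frac{16n}{s^2}\|\nabla s\|^2 \, ds \otimes ds$ of the last $ds\otimes ds$ coefficient in \eqref{eq: second order equation satisfied by H}. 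What remains of that coefficient is $\frac{16n}{s^2}\cdot \frac{k}{2}s^2 = 8nk$. Combining it with the $-4nk$ coming from \eqref{align: excess curvature terms in Delta H} yields the term $4nk \, ds \otimes ds$.

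Finally, I will collect the coefficients of $\mathcal{H}$. These are $2k$ from the curvature rewriting and $-\frac{2n}{s^2}\|\nabla s\|^2 - \frac{5nk}{2}$ from the corollary. I will check that this total equals
\[
-2(n-1)k - \frac{2n}{s^2}\left(\|\nabla s\|^2 + \frac{k}{4}s^2\right) = 2k - 2nk - \frac{nk}{2} - \frac{2n}{s^2}\|\nabla s\|^2,
\]
which it does. The only real obstacle is bookkeeping: keeping track of the $ds \otimes ds$ coefficients and of the split of $-\frac{5nk}{2}$ into $-2nk - \frac{nk}{2}$. I would verify this part carefully term by term.
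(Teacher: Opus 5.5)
Your proposal is correct and is the same argument as the paper's: substitute the excess-curvature rewriting \eqref{align: excess curvature terms in Delta H} into Corollary~\ref{cllr: differential equation satisfied by H}, cancel the $nk^2 s^2 g$ terms, use $(ds \otimes ds)^2 = \|\nabla s\|^2 \, ds \otimes ds$ to assemble $\frac{n}{s^2}(\mathcal{H} - 4\, ds \otimes ds)^2$, and collect the remaining coefficients. Your bookkeeping is also right: $8nk - 4nk = 4nk$ for the $ds \otimes ds$ term, and $2k - \frac{5nk}{2} = -2(n-1)k - \frac{nk}{2}$ for the coefficient of $\mathcal{H}$.
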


\begin{proof}
Combining \eqref{align: excess curvature terms in Delta H} with Corollary \ref{cllr: differential equation satisfied by H}, it follows that
\allowdisplaybreaks
\begin{align*}
    \Delta \mathcal{H} - \frac{2n}{s} \nabla_{\nabla s} \mathcal{H} &= 4n \widehat{Rm}_{\nabla s} - 2 \widehat{Rm}(\mathcal{H}) + \ric \circ \mathcal{H} + \mathcal{H} \circ \ric \\
    &\quad \, + 2k \mathcal{H} - n k^2 s^2 g - 4 nk ds \otimes ds \\
    &\quad \, + \frac{n}{s^2} \mathcal{H}^2 - \frac{1}{s^2} \left( 2n \|\nabla s\|^2 + \frac{5nk}{2} s^2 \right) \mathcal{H} \\
    &\quad \, - \frac{4n}{s^2} \left( \mathcal{H} \circ (ds \otimes ds) + (ds \otimes ds) \circ \mathcal{H} \right) \\
    &\quad \, + \frac{16n}{s^2} \left( \|\nabla s\|^2 + \frac{k}{2} s^2 \right) ds \otimes ds + n k^2 s^2 g,
\end{align*}
and the desired identity now follows by grouping common terms and recalling that $(ds \otimes ds)^2 = \|\nabla s\|^2 ds \otimes ds$.
\end{proof}

Let us now use Lemma \ref{lemma: final form of Delta H} to compute the first two terms on the right-hand side of \eqref{eq: laplacian of barrier test function} from Lemma \ref{lemma: laplacian of barrier test function}.
We record this computation in the form of the following corollary.

\begin{corollary} \label{cllr: Delta H applied to an eigenvector of H}
In the setting and notation from \eqref{eq: barrier function}, we have, at the point $q$,
\begin{equation} \label{eq: Delta H applied to an eigenvector of H}
    \begin{split}
        (\Delta \mathcal{H}) &(e_1, e_1) - \frac{2n}{s} \left(\nabla_{\nabla s} \mathcal{H} \right) (e_1, e_1) \\
        &= 4n \widehat{Rm}_{\nabla s}(e_1, e_1) - 2 \widehat{Rm}(\mathcal{H})(e_1, e_1) + (\ric \circ \mathcal{H})(e_1, e_1) + (\mathcal{H} \circ \ric)(e_1, e_1) \\
        &\quad \, - 2(n - 1)k \mathcal{H}(e_1, e_1) + \frac{n}{s^2} \left( \lambda - 8 \langle \nabla s, e_1 \rangle^2 \right) \left( \lambda - 2 \left( \|\nabla s\|^2 + \frac{k}{4} s^2 \right) \right).
    \end{split}
\end{equation}
\end{corollary}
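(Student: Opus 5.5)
The plan is to evaluate the tensor identity \eqref{eq: final form of Delta H} from Lemma \ref{lemma: final form of Delta H} at the pair $(e_1, e_1)$ at the point $q$. The curvature terms $4n \widehat{Rm}_{\nabla s}$, $-2\widehat{Rm}(\mathcal{H})$, $\ric \circ \mathcal{H} + \mathcal{H} \circ \ric$ and $-2(n-1)k\mathcal{H}$ are simply kept as they are, evaluated at $(e_1, e_1)$. Nothing needs to be done with them at this stage. All the work is in the three remaining, purely algebraic terms, which I will write using the eigenvector relation $\mathcal{H}(e_1, \cdot) = \lambda \langle e_1, \cdot \rangle$ at $q$.

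Put $a := \langle \nabla s, e_1 \rangle^2$ and $Q := \|\nabla s\|^2 + \frac{k}{4}s^2$. For the squared term, view $\mathcal{H} - 4\, ds \otimes ds$ as a symmetric endomorphism. Then
\[
(\mathcal{H} - 4\, ds\otimes ds)^2(e_1, e_1) = \big\| \mathcal{H}(e_1) - 4 \langle \nabla s, e_1 \rangle \nabla s \big\|^2 ,
\]
where $\mathcal{H}(e_1) = \lambda e_1$. Expanding gives
\[
\lambda^2 - 8\lambda a + 16 a \|\nabla s\|^2 .
\]
The other two terms are immediate:
\[
-\frac{2n}{s^2} Q \, \mathcal{H}(e_1,e_1) = -\frac{2n}{s^2} Q\lambda, \qquad 4nk\,(ds \otimes ds)(e_1, e_1) = 4nk\,a .
\]

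Next I expand the claimed expression:
\[
\frac{n}{s^2}(\lambda - 8a)(\lambda - 2Q) = \frac{n}{s^2}\left(\lambda^2 - 2Q\lambda - 8a\lambda + 16 a Q\right).
\]
Here $16aQ = 16 a\|\nabla s\|^2 + 4k s^2 a$, so the term $\frac{n}{s^2}\cdot 4ks^2 a$ is exactly $4nk\,a$. Comparing with the three terms computed above, the two sides agree term by term, which proves the identity.

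There is no real obstacle here. The only points needing care are the sign and ordering conventions for compositions of symmetric two-tensors: $\mathcal{H}\circ(ds\otimes ds)$ evaluated at $(e_1, e_1)$ equals $\lambda a$. The other thing to check is that the $4nk\, ds\otimes ds$ term is absorbed exactly by the $k s^2/4$ part of $Q$ in the factorization.
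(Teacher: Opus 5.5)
Your proposal is correct and is the paper's argument: evaluate the identity of Lemma~\ref{lemma: final form of Delta H} at $(e_1,e_1)$ at $q$, leave the curvature terms as they are, and use $\mathcal{H}(e_1)=\lambda e_1$ to collapse the three algebraic terms into $\frac{n}{s^2}\bigl(\lambda-8\langle\nabla s,e_1\rangle^2\bigr)\bigl(\lambda-2(\|\nabla s\|^2+\frac{k}{4}s^2)\bigr)$. Writing $(\mathcal{H}-4\,ds\otimes ds)^2(e_1,e_1)$ as $\|\lambda e_1-4\langle\nabla s,e_1\rangle\nabla s\|^2$ replaces the paper's use of $(ds\otimes ds)^2=\|\nabla s\|^2\,ds\otimes ds$, gives the same expansion, and your bookkeeping (including the absorption of $4nk\,\langle\nabla s,e_1\rangle^2$ into the $\frac{k}{4}s^2$ part of the second factor) checks out.
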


\begin{proof}
Since $(ds \otimes ds)^2 = \|\nabla s\|^2 ds \otimes ds$ and $\mathcal{H}(e_1, e_1) = \lambda(q)$, one immediately computes that
\allowdisplaybreaks
\begin{align*}
    \frac{n}{s^2} &\left(\mathcal{H} - 4 \, ds \otimes ds \right)^2(e_1, e_1) - \frac{2n}{s^2} \left( \|\nabla s\|^2 + \frac{k}{4} s^2 \right) \mathcal{H}(e_1, e_1) + 4 n k \, (ds \otimes ds)(e_1, e_1) \\
    &= \frac{n}{s^2} \left( \lambda - 8 \langle \nabla s, e_1 \rangle^2 \right) \left( \lambda - 2 \left( \|\nabla s\|^2 + \frac{k}{4} s^2 \right) \right).
\end{align*}
The desired result now follows by combining this preceding identity with \eqref{eq: final form of Delta H} from Lemma \ref{lemma: final form of Delta H}.
\end{proof}

Corollary \ref{cllr: Delta H applied to an eigenvector of H} allows us to rewrite the first two terms on the right-hand side of \eqref{eq: laplacian of barrier test function} from Lemma \ref{lemma: laplacian of barrier test function} in terms of $\lambda(q)$ and the curvature tensor of $(M, g)$, and we are almost finished with the proof of Theorem \ref{intro-th: sharp hessian estimate}.
We previously mentioned that, in general, $(\Delta \mathcal{H})(e_1, e_1) - 2n (\nabla_{\nabla \log s} \mathcal{H})(e_1, e_1)$ may not be non-negative when evaluated at an eigenvector of $\mathcal{H}$ (corresponding to its largest eigenvalue), and this is clearly seen to be a consequence of the last term on the right-hand side of  \eqref{eq: Delta H applied to an eigenvector of H}.

However, the other terms in that equation always combine to a non-negative quantity.
Indeed, since we assumed that the sectional curvature of $M$ is bounded below by $k$, by construction (cf.~\eqref{eq: excess curvature tensor in the direction of nabla s}),
\begin{equation} \label{eq: excess sectional curvature in direction of nabla s is non-negative}
    \widehat{Rm}_{\nabla s}(e_1, e_1) \geq 0.
\end{equation}

The non-negativity of the remaining terms is implied by the following lemma.

\begin{lemma} \label{lemma: non-negativity of the curvature terms on largest eigenvectors}
In the setting and notation from \eqref{eq: barrier function}, we have
\[
- 2 \widehat{Rm}(\mathcal{H})(e_1, e_1) + (\ric \circ \mathcal{H})(e_1, e_1) + (\mathcal{H} \circ \ric)(e_1, e_1) - 2(n - 1)k \mathcal{H}(e_1, e_1) \geq 0.
\]
\end{lemma}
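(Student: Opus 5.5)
The plan is a direct pointwise computation at $q$ in the orthonormal eigenbasis $(e_1, \cdots, e_n)$ of $\mathcal{H}|_q$. The inequality should reduce to a sum of products of two non-negative factors: excess sectional curvatures $\sec - k$ and eigenvalue gaps $\lambda - \lambda_j$.

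First, since $(M, g)$ is Einstein, write $\ric = \rho g$ for a constant $\rho$. Then
\[
(\ric \circ \mathcal{H})(e_1, e_1) + (\mathcal{H} \circ \ric)(e_1, e_1) = 2\rho\lambda .
\]
Setting $K_j := \sec(e_1, e_j)$ for $j \geq 2$, we have
\[
\rho = \ric(e_1, e_1) = \sum_{j = 2}^n K_j .
\]

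Next, expand the curvature action in the eigenbasis. Since $\mathcal{H}|_q = \sum_j \lambda_j e_j^\flat \otimes e_j^\flat$, the definition $Rm(T)(X, Y) = \langle T, Rm(\cdot, X, Y, \cdot) \rangle$ together with the sectional curvature convention \eqref{sec: sectional curvature convention} gives
\[
Rm(\mathcal{H})(e_1, e_1) = \sum_{j = 1}^n \lambda_j\, Rm(e_j, e_1, e_1, e_j) = \sum_{j = 2}^n \lambda_j K_j ,
\]
because the $j = 1$ term vanishes by antisymmetry. Using \eqref{eq: excess curvature action on two tensors} and $\tr(\mathcal{H}) = \sum_j \lambda_j$, it follows that
\[
\widehat{Rm}(\mathcal{H})(e_1, e_1) = \sum_{j = 2}^n \lambda_j K_j - k \sum_{j = 2}^n \lambda_j .
\]
The $j = 1$ contribution $-k\lambda$ of the trace term cancels against the $+k\mathcal{H}(e_1, e_1) = k\lambda$ term.

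Substituting these expressions, and writing $2(n-1)k\lambda = 2\sum_{j = 2}^n k\lambda$, the left-hand side of the lemma becomes
\[
2 \sum_{j = 2}^n \big( K_j \lambda - K_j \lambda_j + k \lambda_j - k \lambda \big) = 2 \sum_{j = 2}^n (K_j - k)(\lambda - \lambda_j).
\]
Each factor $K_j - k$ is non-negative by the assumption $\sec \geq k$. Each factor $\lambda - \lambda_j$ is non-negative because $\lambda = \lambda(q)$ is the largest eigenvalue. Hence the sum is non-negative.

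There is no serious obstacle here. The only delicate step is bookkeeping the sign conventions for $Rm(T)$ and the sectional curvature, so that $Rm(\mathcal{H})(e_1, e_1)$ really equals $+\sum_{j \geq 2} \lambda_j K_j$. I would check this against the constant-curvature identity $Rm(T) = k\tr(T) g - kT$ recorded before \eqref{eq: excess curvature tensor in the direction of nabla s}: with $K_j \equiv k$ it gives $k(\tr \mathcal{H} - \lambda)$, which is consistent. The Einstein hypothesis enters exactly in identifying $\ric \circ \mathcal{H}$ with $\rho\mathcal{H}$ and $\rho$ with $\sum_{j \geq 2} K_j$. This is also where the general, non-Einstein case would fail, since off-diagonal Ricci terms would appear.
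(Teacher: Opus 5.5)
Your proof is correct and is essentially the paper's argument: expand everything in the $\mathcal{H}$-eigenbasis at $q$ and collapse the left-hand side to $2\sum_{j\ge 2}(\sec(e_1,e_j)-k)(\lambda-\lambda_j)\ge 0$. One small correction to your closing remark: the identity $(\ric\circ\mathcal{H})(e_1,e_1)=(\mathcal{H}\circ\ric)(e_1,e_1)=\lambda\ric(e_1,e_1)$ holds for any metric, because $e_1$ is an eigenvector of $\mathcal{H}$ and so the off-diagonal Ricci terms are multiplied by zero; the Einstein hypothesis is therefore not what makes this particular lemma work (the paper needs it instead for the commutation formula \eqref{eq: commuting Hessian with Laplacian} and the divergence computation \eqref{eq: divergence of H}).
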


\begin{proof}
Recall that, in the situation of \eqref{eq: barrier function}, we defined $\lambda_1, \cdots, \lambda_n$ to be the eigenvalues of $\mathcal{H}|_q$ with $\lambda_1 = \lambda(q)$, and $(e_1, \cdots, e_n)$ to be an orthonormal basis for $T_q M$ consisting of eigenvectors of $\mathcal{H}$, with $\mathcal{H}(e_i, e_j) = \delta_{ij} \lambda_i$.
Denoting by $R_{ijkl} := Rm(e_i, e_j, e_k, e_l)$, it then follows that $\sec(e_1, e_j) = R_{1jj1}$, that $Rm(\mathcal{H})(e_1, e_1) = \sum_{j = 2}^n \sec(e_1, e_j) \lambda_j$, and that
\begin{align} \label{align: non-negativity of the curvature terms on largest eigenvectors part 1}
    \widehat{Rm}(\mathcal{H})(e_1, e_1) &\equiv Rm(\mathcal{H})(e_1, e_1) - k \tr(\mathcal{H}|_q) + k \mathcal{H}(e_1, e_1) \nonumber \\
    &= \sum_{j = 2}^n \sec(e_1, e_j) \lambda_j - k \sum_{j = 1}^n \lambda_j + k \lambda_1 \nonumber \\
    &= \sum_{j = 2}^n (\sec(e_1, e_j) - k) \lambda_j.
\end{align}
Similarly, since $g$ is Einstein, $(\mathcal{H} \circ \ric)(e_1, e_1) = (\ric \circ \mathcal{H})(e_1, e_1) = \lambda_1 \ric(e_1, e_1)$, and as $\ric(e_1, e_1) = \sum_{j = 2}^n \sec(e_1, e_j)$, we get that
\begin{equation} \label{align: non-negativity of the curvature terms on largest eigenvectors part 2}
    (\mathcal{H} \circ \ric) (e_1, e_1) + (\ric \circ \mathcal{H}) (e_1, e_1) = 2 \sum_{j = 2}^n \sec(e_1, e_j) \lambda_1.
\end{equation}

Combining \eqref{align: non-negativity of the curvature terms on largest eigenvectors part 1} with \eqref{align: non-negativity of the curvature terms on largest eigenvectors part 2}, we obtain that
\allowdisplaybreaks
\begin{align*}
    -2 &\widehat{Rm}(\mathcal{H})(e_1, e_1) + (\mathcal{H} \circ \ric)(e_1, e_1) + (\ric \circ \mathcal{H})(e_1, e_1) - 2(n - 1)k \mathcal{H}(e_1, e_1) \\
    &= -2 \sum_{j = 2}^n (\sec(e_1, e_j) - k) \lambda_j + 2 \sum_{j = 2}^n \sec(e_1, e_j) \lambda_1 - 2(n - 1)k \lambda_1 \\
    &= 2 \sum_{j = 2}^n (\sec(e_1, e_j) - k)(\lambda_1 - \lambda_j),
\end{align*}
and this is non-negative as $\sec \geq k$ and $\lambda_1 \equiv \lambda(q) \geq \lambda_j$ for all $j \geq 2$.
\end{proof}

The final ingredient required to complete the proof of Theorem \ref{intro-th: sharp hessian estimate} is the following simple lemma, which shows that the covariant derivative of $\mathcal{H}$ evaluated at $\lambda(q)$-eigenvectors is zero.
This result is required to rewrite the last term on the right-hand side of \eqref{eq: master identity for the laplacian of the barrier function} using the divergence of $\mathcal{H}$.

\begin{lemma} \label{lemma: covariant derivative of H evaluated at largest eigenvectors is zero}
In the setting and notation of \eqref{eq: barrier function}, for all $\lambda(q)$-eigenvectors $u, v \in T_q M$ and all $w \in T_q M$, we have
\[
(\nabla_w \mathcal{H})(u, v) = 0.
\]
\end{lemma}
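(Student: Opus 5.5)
We argue by a first- and second-order maximum principle applied to quadratic forms of $\mathcal{H}$ along the top eigenspace. Fix $\lambda(q)$-eigenvectors $u, v \in T_q M$. By polarization, and since the eigenspace $E := \mathrm{span}(e_1, \cdots, e_m)$ is a linear subspace, it suffices to show $(\nabla_w \mathcal{H})(u, u) = 0$ for every unit $u \in E$. Indeed, $(\nabla_w \mathcal{H})$ is a symmetric bilinear form, and its values on the diagonal of $E$ determine it on $E \times E$ via $2B(u, v) = B(u + v, u + v) - B(u, u) - B(v, v)$.

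To this end, fix a unit $u \in E$. Let $U$ be the vector field near $q$ obtained from $u$ by parallel transport along radial geodesics (equivalently, in the local orthonormal frame $(E_1, \cdots, E_n)$ with $\nabla E_i|_q = 0$, take $U := \sum_i u^i E_i$ with constant coefficients). Then $U$ is a unit vector field with $U|_q = u$ and $\nabla U|_q = 0$. Set $f := \mathcal{H}(U, U)$.

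By the variational characterization of the largest eigenvalue, $f \leq \lambda$ near $q$, with $f(q) = \lambda(q)$. Since $q$ is a local maximum of $\lambda$, it follows that $f(x) \leq \lambda(x) \leq \lambda(q) = f(q)$ for $x$ near $q$. Thus $q$ is a local maximum of the smooth function $f$, and hence $\nabla f|_q = 0$. By the product rule,
\[
\nabla_w f = (\nabla_w \mathcal{H})(U, U) + 2\mathcal{H}(\nabla_w U, U),
\]
and at $q$ the second term vanishes because $\nabla U|_q = 0$. Therefore $(\nabla_w \mathcal{H})(u, u) = 0$ for all $w \in T_q M$, and polarization completes the argument.

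There is no real obstacle. The only point requiring care is that we compare against $\lambda$ at a local maximum of $\lambda$ rather than at an arbitrary point; this is exactly the situation of \eqref{eq: barrier function}, where $q$ was chosen as a local maximum of $\lambda$. Note also that we need $\nabla U|_q = 0$, rather than the derivative prescription \eqref{eq: derivative of the vector field for barrier function}, so that the first-order term involving $\mathcal{H}(\nabla_w U, U)$ drops out cleanly. Even with the prescription \eqref{eq: derivative of the vector field for barrier function}, this term would vanish, since $\nabla_w V \perp E$ and $\mathcal{H}$ preserves $E$.
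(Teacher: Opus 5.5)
Your proposal is correct and follows essentially the same route as the paper's proof. Both reduce to $u = v$ unit by polarization, take a unit extension $U$ with $\nabla U|_q = 0$, and observe that $f = \mathcal{H}(U,U)$ has a local maximum at $q$. The product rule then gives $(\nabla_w \mathcal{H})(u,u) = 0$.
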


\begin{proof}
Let $u$ and $v$ be $\lambda(q)$-eigenvectors of $\mathcal{H}|_q$, and let $w \in T_q M$.
By linearity and the polarization identity, it suffices to show the desired conclusion only when $u = v$ and $\|u\| = 1$.
Let $U$ be a unit vector field defined near $q$ with $U|_q = u$ and $\nabla U|_q = 0$; define the function $f := \mathcal{H}(U, U)$.
As in the argument for the function $h$ from \eqref{eq: barrier function} (using the variational characterization of the eigenvalues of $\mathcal{H}|_q$), $f$ is smooth and $q$ is a local maximum point for $f$, hence $\nabla f|_q = 0$.
However, the product rule tells us that
\[
0 = \langle \nabla f|_q, w \rangle = (\nabla_w \mathcal{H})(u, u) + 2 \mathcal{H}(\nabla_w U, u) = (\nabla_w \mathcal{H})(u, u),
\]
where the last equality follows because, by construction, $\nabla_w U = 0$.
\end{proof}

We are now ready to prove Theorem \ref{intro-th: sharp hessian estimate}.

\begin{proof}[Proof of Theorem \ref{intro-th: sharp hessian estimate}]
As in the previous parts of this section, denote by $\lambda : M \setminus \{p\} \to \R$ the largest eigenvalue of $\mathcal{H} \equiv \hess{s^2} + k s^2 g$; this is a continuous function on $M \setminus \{p\}$ which can be extended by continuity to all of $M$ by setting $\lambda(p) := 2$ (by the asymptotics of $\mathcal{H}$ near $p$, cf.~Theorem \ref{appendix-th: hessian asymptotics of greens function in positive curvature} and Corollary \ref{appendix-cllr: sharp c2 asymptotics for s near singularity}).
To prove Theorem \ref{intro-th: sharp hessian estimate}, we need to show that $\lambda \leq 2$ on $M \setminus \{p\}$, with equality holding at \emph{some} point if and only if $(M, g)$ is isometric to the round $n$-sphere of constant curvature equal to $k$.

We start by proving that $\lambda \leq 2$ on $M \setminus \{p\}$.
Suppose, for the sake of contradiction, that $\sup_{M \setminus \{p\}} \lambda > 2$.
Since $M$ is compact and $\lambda$ is continuous on $M$, this implies that there is a point $q \in M \setminus \{p\}$ so that $\lambda(q) > 2$ and $q$ is a maximum point for $\lambda$. Denote by $m \geq 1$ the multiplicity of $\lambda(q)$; we previously showed (using \eqref{eq: Delta s squared} and the sharp gradient estimate for $s$, cf.~\eqref{intro-eq: sharp gradient estimate} or {\cite[Theorem 1.1]{ManeaSGE}}) that, at the point $q$,
\begin{equation} \label{eq: trace of H}
    \tr(\mathcal{H}) = 2n \left( \|\nabla s\|^2 + \frac{k}{4} s^2 \right) \leq 2n.
\end{equation}
Thus, we necessarily have that $1 \leq m < n$.
Denote by $\lambda_1 = \cdots = \lambda_m > \lambda_{m + 1} \geq \cdots \geq \lambda_n$ the eigenvalues of $\mathcal{H}|_q$, with $\lambda_1 = \cdots = \lambda_m = \lambda(q)$, and let $(e_1, \cdots, e_n)$ be an orthonormal basis for $T_q M$ consisting of eigenvectors of $\mathcal{H}$, with $\mathcal{H}(e_i, e_j) = \delta_{ij} \lambda_i$.

Consider the barrier function $h := \mathcal{H}(V, V)$ from \eqref{eq: barrier function} where $V$ is a smooth, unit vector field defined near $q$ with $V|_q = e_1$ and for which $\nabla V$ satisfies the identity from \eqref{eq: derivative of the vector field for barrier function}.
As discussed shortly after \eqref{eq: barrier function}, the function $h$ is a lower support for $\lambda$ at $q$, and $q$ is a local maximum point for $h$ by the variational characterization of $\lambda$; in particular, $\nabla h|_q = 0$ and $\Delta h(q) \leq 0$.

By Lemma \ref{lemma: laplacian of barrier test function}, at the point $q$, we have
\[
\Delta h - \frac{2n}{s} \nabla_{\nabla s} h = (\Delta \mathcal{H})(e_1, e_1) - \frac{2n}{s} (\nabla_{\nabla s} \mathcal{H}|_q) (e_1, e_1) + 2 \sum_{i = 1}^n \sum_{j = m + 1}^n \frac{\big( (\nabla_{e_i} \mathcal{H})(e_j, e_1) \big)^2}{\lambda - \lambda_j},
\]
and by \eqref{eq: Delta H applied to an eigenvector of H} in Corollary \ref{cllr: Delta H applied to an eigenvector of H}, \eqref{eq: excess sectional curvature in direction of nabla s is non-negative}, and Lemma \ref{lemma: non-negativity of the curvature terms on largest eigenvectors}, it follows that, at the point $q$,
\begin{equation} \label{eq: sharp hessian inequality proof part 1}
    \Delta h - \frac{2n}{s} \nabla_{\nabla s} h \geq \frac{n}{s^2} \left( \lambda - 8 \langle \nabla s, e_1 \rangle^2 \right) \left( \lambda - 2 \left( \|\nabla s\|^2 + \frac{k}{4} s^2 \right) \right) + 2 \sum_{i = 1}^n \sum_{j = m + 1}^n \frac{\big( (\nabla_{e_i} \mathcal{H})(e_j, e_1) \big)^2}{\lambda - \lambda_j}.
\end{equation}

Let us now further rewrite the last term on the right-hand side of \eqref{eq: sharp hessian inequality proof part 1}.
First, by discarding the summands with $i \neq j$ and then using the Cauchy-Schwarz inequality, we have
\begin{equation} \label{eq: sharp hessian inequality proof part 2}
    2 \sum_{i = 1}^n \sum_{j = m + 1}^n \frac{\big( (\nabla_{e_i} \mathcal{H})(e_j, e_1) \big)^2}{\lambda - \lambda_j} \geq 2 \sum_{j = m + 1}^n \frac{\big( (\nabla_{e_j} \mathcal{H})(e_j, e_1) \big)^2}{\lambda - \lambda_j} \geq \frac{2 \left( \sum_{j = m + 1}^n (\nabla_{e_j} \mathcal{H})(e_j, e_1) \right)^2}{\sum_{j = m + 1}^n (\lambda - \lambda_j)}.
\end{equation}
The denominator on the right-hand side of \eqref{eq: sharp hessian inequality proof part 2} may be explicitly computed (at $q$) as
\begin{equation} \label{eq: denominator in last sum term for proof of sharp hessian inequality}
    \sum_{j = m + 1}^n (\lambda - \lambda_j) = (n - m) \lambda - \sum_{j = m + 1}^n \lambda_j = n \lambda - \tr(\mathcal{H}) = n \left( \lambda - 2 \left( \|\nabla s\|^2 + \frac{k}{4} s^2 \right) \right),
\end{equation}
where the last equality follows from \eqref{eq: trace of H}.

The numerator may also be computed explicitly in terms of $\div \mathcal{H}(e_1)$ as follows.
Recall that we assumed that $g$ is an Einstein metric; this means that $\ric = \rho g$, for some constant $\rho \in \R$ satisfying the inequality $\rho \geq (n - 1)k$, as the sectional curvature of $g$ is bounded below by $k$.
Lemma \ref{lemma: covariant derivative of H evaluated at largest eigenvectors is zero} then tells us that $(\nabla_{e_j} \mathcal{H})(e_j, e_1) = 0$ for all $j \in \{1, \cdots, m\}$, and so
\[
(\div \mathcal{H})(e_1) \equiv \sum_{j = 1}^n  (\nabla_{e_j} \mathcal{H})(e_j, e_1) = \sum_{j = m + 1}^n (\nabla_{e_j} \mathcal{H})(e_j, e_1).
\]
Hence, together with \eqref{eq: denominator in last sum term for proof of sharp hessian inequality}, it follows that 
\begin{equation} \label{eq: last term for proof of sharp hessian inequality in divergence form}
    \frac{2 \left( \sum_{j = m + 1}^n (\nabla_{e_j} \mathcal{H})(e_j, e_1) \right)^2}{\sum_{j = m + 1}^n (\lambda - \lambda_j)} = \frac{2 \big(\!\div \mathcal{H}(e_1) \big)^2}{n \left( \lambda - 2 \left( \|\nabla s\|^2 + \frac{k}{4} s^2 \right) \right)}.
\end{equation}
However, since $\mathcal{H} \equiv \hess{s^2} + ks^2g$, we may also compute
\begin{equation} \label{eq: divergence of H}
    \div \mathcal{H} = d \Delta s^2 + \ric(\nabla s^2, \cdot ) + k \, d s^2 = 2n \, d \|\nabla s\|^2 + nks \, ds + (\rho - (n - 1)k) ds^2,
\end{equation}
where the last equality follows because $\ric(\nabla s^2, \cdot) = \rho \, d s^2$ and because $\Delta s^2 = 2n \|\nabla s\|^2 - nks^2/2$ (cf.~\eqref{eq: Delta s squared}).

Using the product rule and the identity $d \|\nabla s\|^2 = 2 \hess{s}(\nabla s, \cdot)$, we may then further rewrite, at the point $q$,
\allowdisplaybreaks
\begin{align*}
    \langle \nabla \|\nabla s\|^2, e_1 \rangle &= 2 \hess{s}(\nabla s, e_1) \\
    &= 2 \left( \frac{1}{2s} \hess{s^2}(\nabla s, e_1) - \frac{1}{s} \|\nabla s\|^2 \langle \nabla s, e_1 \rangle \right) \\
    &= \frac{1}{s} (\mathcal{H}(\nabla s, e_1) - k s^2 \langle \nabla s, e_1 \rangle) - \frac{2}{s} \|\nabla s\|^2 \langle \nabla s, e_1 \rangle \\
    &= \frac{1}{s} \left( \lambda -  2 \|\nabla s\|^2 \right) \langle \nabla s, e_1 \rangle - ks \langle \nabla s, e_1 \rangle,
\end{align*}
where the last identity follows because $e_1$ is a $\lambda(q)$-eigenvector of $\mathcal{H}|_q$.
Combining this preceding identity with \eqref{eq: divergence of H}, we obtain that
\begin{align} \label{align: divergence of H in the direction of a largest eigenvector}
    \div \mathcal{H}(e_1) &= 2n \left( \frac{1}{s} \left( \lambda -  2 \|\nabla s\|^2 \right) \langle \nabla s, e_1 \rangle - ks \langle \nabla s, e_1 \rangle \right) + nks \langle \nabla s, e_1 \rangle + 2s (\rho - (n - 1)k) \langle \nabla s, e_1 \rangle \nonumber \\
    &= \frac{2n}{s} \left( \lambda - 2 \left( \|\nabla s\|^2 + \frac{k}{4} s^2 \right) \right) \langle \nabla s, e_1 \rangle + 2s (\rho - (n - 1)k) \langle \nabla s, e_1 \rangle \nonumber \\
    &= 2s \langle \nabla s, e_1 \rangle \left( \frac{n}{s^2} \left( \lambda - 2 \left( \|\nabla s\|^2 + \frac{k}{4} s^2 \right) \right) + (\rho - (n - 1)k) \right).
\end{align}

Inserting \eqref{align: divergence of H in the direction of a largest eigenvector} into \eqref{eq: last term for proof of sharp hessian inequality in divergence form} and expanding the square, we get that
\begin{equation} \label{eq: final computation for the divergence term in laplace of barrier function}
    \begin{split}
        \frac{2\left( \sum_{j = m + 1}^n (\nabla_{e_j} \mathcal{H})(e_j, e_1) \right)^2}{\sum_{j = m + 1}^n (\lambda - \lambda_j)} &= \frac{8n}{s^2} \langle \nabla s, e_1 \rangle^2 \left( \lambda - 2 \left( \|\nabla s\|^2 + \frac{k}{4} s^2 \right) \right) \\
        &\quad \, + 16 \langle \nabla s, e_1 \rangle^2 (\rho - (n - 1)k) + \frac{8 \langle \nabla s, e_1 \rangle^2 s^2(\rho - (n - 1)k)^2}{n \left( \lambda - 2 \left( \|\nabla s\|^2 + \frac{k}{4} s^2 \right) \right)}.
    \end{split}
\end{equation}
Combining \eqref{eq: final computation for the divergence term in laplace of barrier function} with \eqref{eq: sharp hessian inequality proof part 2} and \eqref{eq: sharp hessian inequality proof part 1} and grouping the common terms, it follows that, at the point $q$, we have
\begin{equation} \label{eq: end of proof of sharp hessian inequality}
    \begin{split}
        \Delta h - \frac{2n}{s} \nabla_{\nabla s} h &\geq \frac{n \lambda}{s^2} \left( \lambda - 2 \left( \|\nabla s\|^2 + \frac{k}{4} s^2 \right) \right) \\
        &\quad \, + 16 \langle \nabla s, e_1 \rangle^2 (\rho - (n - 1)k) + \frac{8 \langle \nabla s, e_1 \rangle^2 s^2(\rho - (n - 1)k)^2}{n \left( \lambda - 2 \left( \|\nabla s\|^2 + \frac{k}{4} s^2 \right) \right)}.
    \end{split}
\end{equation}
Since, by assumption, $\lambda(q) > 2 \geq 2 (\|\nabla s\|^2 + ks^2/4)$, the first term on the right-hand side of \eqref{eq: end of proof of sharp hessian inequality} is strictly positive; the other two terms are also clearly non-negative.
We conclude that $\Delta h(q) - 2n \langle \nabla \log s|_q, \nabla h|_q \rangle > 0$, which is a contradiction as $q$ is a local maximum point for $h$.
This finishes the proof of the Hessian inequality for $s^2$.

For the rigidity statement, suppose that $\lambda(q) = 2$ at some point $q \in M \setminus \{p\}$; since the previous paragraphs above show that $\lambda \leq 2$ on $M \setminus \{p\}$, it follows that $q$ is a global maximum point for $\lambda$.
Again, suppose, for the sake of contradiction, that $(M, g)$ is not isometric to the round $n$-sphere of constant curvature $k$.
By the rigidity statement of the sharp gradient estimate for $s$ (cf.~{\cite[Theorem 1.1]{ManeaSGE}}), it follows that $(2n)^{-1} \tr(\mathcal{H}) = \|\nabla s\|^2 + ks^2/4 < 1$ on $M \setminus \{p\}$.
We may now define the same barrier function $h$ from \eqref{eq: barrier function} -- for which $q$ would be a local maximum point for $h$, \ie $\Delta h(q) \leq 0$ -- and go through the same argument from \eqref{eq: sharp hessian inequality proof part 1}--\eqref{eq: end of proof of sharp hessian inequality} to obtain that $\Delta h(q) > 0$, which would be a contradiction.
Thus, $(M, g)$ must be isometric to the round $n$-sphere of constant curvature equal to $k$.
\end{proof}
\section{Sharp monotonic quantities related to the Hessian inequality} \label{sec: sharp monotonocity formulae}

This section is dedicated to proving the sharp monotonic quantities described in Paragraph \ref{intro-subsec: sharp monotonic quantities for the hessian} in Section \ref{sec: intro}.
We start with the proof of Theorem \ref{intro-th: sharp family of monotonicity formulae}.
As mentioned in Section \ref{sec: intro}, let us first describe in more detail the context and motivation behind this theorem by first analysing the round sphere.
Throughout the rest of this section, we denote by $\S^n_k$ the round $n$-sphere of constant curvature equal to $k$, and by $\S^n := \S^n_1$.
Fix a point $p \in \S^n_k$; as discussed in the introduction, the operator $-\Delta + n(n - 2)k/4$ is the conformal Laplacian of $\S^n_k$, and its Green's function $G$ (with singularity at $p$) may be explicitly computed (by \cite[Lemma 2.1]{ManeaSGE}) as $G = (2 \snk(r/2))^{2 - n}$, where $r := \dist(p, \cdot)$ and $\snk$ is the curvature-adapted sine function.
In this case, $s \equiv 2 \snk(r/2)$ and $r = 2 \asnk(s/2)$, where $\asnk$ is the curvature-adapted arcsine function (the inverse of $\snk$ on $[-\pi/(2\sqrt{k}), \pi/(2\sqrt{k})]$).
Remark also that $s^2$ is smooth on all of $\S^n_k$.

If $\varphi^t : \S^n_k \to \S^n_k$ denotes the flow of $\nabla s^2$ at time $t$, then a direct computation shows that $\frac{d}{dt} (r \circ \varphi^t) = 2 \snk(r \circ \varphi^t)$, from which it immediately follows that 
\begin{equation} \label{eq: radisu }
    r \circ \varphi^t = 2 \atnk \left( e^{2t} \tnk(r/2) \right),
\end{equation}
where $\tnk$ is the curvature-adapted tangent function (defined as $\tnk := \snk/\csk$, with $\csk = \snk'$), and $\atnk$ is its inverse.
A direct computation then shows, if $\dd\!\vol_{\S^n_k}$ denotes the volume form of $\S^n_k$, then for all $\beta, t \in \R$, we have
\begin{equation} \label{eq: canonical radial measure is preserved by the flow on the sphere}
    (\varphi^t)^* \left( s^{\beta - n} \left( 1 - \frac{k}{4} s^2 \right)^{-\frac{n + \beta}{2}} \dd\!\vol_{\S^n_k} \right) = e ^{2 \beta t} \left( s^{\beta - n} \left( 1 - \frac{k}{4} s^2 \right)^{-\frac{n + \beta}{2}} \dd\!\vol_{\S^n_k} \right),
\end{equation}
that is, $\varphi^t$ scales the top-form $s^{\beta - n} (1 - ks^2/4)^{-(n + \beta)/2} \dd \vol_{\S^n_k}$ by a factor $e^{2 \beta t}$.
In fact, we can prove that, up to an angular function, all $n$-forms on $\S^n_k \setminus \{p, -p\}$ scaled by $\varphi^t$ are of this form, where $-p$ is the antipodal point of $p$.

\begin{lemma} \label{lemma: forms scaled by flow of nabla s squared on the sphere}
If $\omega = f \dd\!\vol_{\S^n_k}$ is a non-zero $n$-form on $\S^n_k \setminus \{p, -p\}$ for which $(\varphi^t)^*\omega = h(t) \omega$ for some (continuous) function $h$ on $\R$, then $h(t) = e^{2 \beta t}$ for some $\beta \in \R$ and, on $\S^n_k \setminus \{p, -p\} \simeq (0, \pi/\sqrt{k}) \times_{\snk^2} \S^{n - 1}$, $f(r, \theta) = C(\theta) s^{\beta - n} (1 - ks^2/4)^{-(n + \beta)/2}$ for some function $C$ on $\S^{n - 1}$.
\end{lemma}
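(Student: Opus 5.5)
The plan is to reduce the statement to a functional equation for the multiplier $h$ and an invariance statement for a ratio of densities. I would work on $\S^n_k \setminus \{p, -p\}$ in the polar coordinates $(r, \theta) \in (0, \pi/\sqrt{k}) \times \S^{n - 1}$. Since $s^2$ is radial, $\nabla s^2$ is a radial vector field. By \eqref{eq: radisu }, the flow therefore acts by $\varphi^t(r, \theta) = (R_t(r), \theta)$ with $R_t(r) := 2 \atnk\left(e^{2t} \tnk(r/2)\right)$. This map preserves $\S^n_k \setminus \{p, -p\}$, so $\varphi^t$ is a diffeomorphism of the domain of $\omega$ for every $t$.

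First I would show that $h(t) = e^{2\beta t}$. Since $\varphi^t$ is a diffeomorphism, $(\varphi^t)^*\omega$ is non-zero whenever $\omega$ is, so $h(t) \neq 0$. Taking $t = 0$ gives $h(0) = 1$. The group law $\varphi^{t + \tau} = \varphi^\tau \circ \varphi^t$ gives $(\varphi^{t+\tau})^* = (\varphi^t)^* (\varphi^\tau)^*$, and applying both sides to $\omega$ and comparing coefficients yields $h(t + \tau) = h(t) h(\tau)$. A continuous, nowhere-vanishing multiplicative function on $\R$ with $h(0) = 1$ is positive by connectedness. Hence $\log h$ is a continuous additive function, so it is linear, and $h(t) = e^{2\beta t}$ for a unique $\beta \in \R$.

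Next I would compare $\omega$ with the model form $\omega_\beta := s^{\beta - n}(1 - ks^2/4)^{-(n + \beta)/2} \dd\!\vol_{\S^n_k}$. On $\S^n_k \setminus \{p, -p\}$ we have $0 < s < 2/\sqrt{k}$, so the coefficient $f_\beta$ of $\omega_\beta$ is smooth and nowhere zero. Set $F := f/f_\beta$. Writing $(\varphi^t)^*(u \, \dd\!\vol) = (u \circ \varphi^t) J_t \, \dd\!\vol$, where $J_t > 0$ is the Jacobian of $\varphi^t$, the hypothesis and \eqref{eq: canonical radial measure is preserved by the flow on the sphere} give
\[
(f \circ \varphi^t) J_t = e^{2\beta t} f, \qquad (f_\beta \circ \varphi^t) J_t = e^{2\beta t} f_\beta .
\]
Dividing the first identity by the second gives $F \circ \varphi^t = F$ for all $t$.

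Finally I would use that the flow is transitive along each radial segment. For fixed $r_0$, the map $t \mapsto \tnk(R_t(r_0)/2) = e^{2t} \tnk(r_0/2)$ is a bijection from $\R$ onto $(0, \infty)$. Since $r \mapsto \tnk(r/2)$ is a bijection from $(0, \pi/\sqrt{k})$ onto $(0, \infty)$, every $r \in (0, \pi/\sqrt{k})$ equals $R_t(r_0)$ for some $t$. Therefore $F(r, \theta) = F(r_0, \theta) =: C(\theta)$, and so $f = C(\theta) f_\beta$, which is the claimed formula.

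I expect the only delicate points to be two bookkeeping steps rather than any real computation. The first is justifying the nonvanishing of $h$ and the resulting positivity, which is where continuity of $h$ is used. The second is checking that the flow preserves $\S^n_k \setminus \{p, -p\}$ and sweeps out each whole open radial segment; the explicit formula \eqref{eq: radisu } handles both.
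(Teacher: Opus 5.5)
Your proof is correct, but it takes a different route from the paper's. Both arguments obtain $h(t) = e^{2\beta t}$ from the group law; you spell out the nonvanishing and positivity of $h$ more carefully than the paper does.

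After that, the paper works infinitesimally. It differentiates $(\varphi^t)^*\omega = e^{2\beta t}\omega$ at $t = 0$ to get $\div(f \nabla s^2) = 2\beta f$. In polar coordinates this is the radial ODE $2\snk(r)\partial_r f + 2n\csk(r) f = 2\beta f$, which the paper integrates to $f = C(\theta)(2\snk(r/2))^{\beta - n}\csk(r/2)^{-(\beta + n)}$.

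You instead work with the global flow. You take the scaling identity \eqref{eq: canonical radial measure is preserved by the flow on the sphere} for the model forms as input and divide to get a $\varphi^t$-invariant ratio $F = f/f_\beta$. The explicit formula $\tnk(r \circ \varphi^t/2) = e^{2t}\tnk(r/2)$ shows the flow fixes $\theta$ and acts transitively on each open radial segment, so $F$ depends only on $\theta$.

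What your route buys is that it needs no differentiability of $f$ and no ODE integration, only the global structure of the flow. The price is that it rests on \eqref{eq: canonical radial measure is preserved by the flow on the sphere}, which the paper only asserts as ``a direct computation.'' The paper's argument is self-contained: it actually produces the density $s^{\beta - n}(1 - ks^2/4)^{-(n+\beta)/2}$, and the same ODE computation, read in reverse, proves that identity. Yours establishes uniqueness relative to a solution already known.
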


\begin{proof}
The group property of $\varphi^t$ necessarily implies that $h(t + s) = h(t) h(s)$ for all $t, s \in \R$.
Thus, as $\omega \neq 0$, we get that $h(t) = e^{2\beta t}$ for some constant $\beta \in \R$.
Differentiating the identity $(\varphi^t)^*\omega = h(t) \omega$ at $t = 0$ and using the definition of $\omega$ and that of the divergence of a vector field, we obtain that $f$ satisfies the equation $\div(f \nabla s^2) = 2\beta f$.
Since $s = 2 \snk(r/2)$, this can be explicitly rewritten as
\[
2 \snk(r) \partial_r f + 2n \csk(r) f = 2\beta f,
\]
which, using the warped product structure of $\S^n_k \setminus \{p, -p\} \simeq (0, \pi/\sqrt{k}) \times_{\snk^2} \S^{n - 1}$, may be explicitly solved to obtain that 
\begin{equation} \label{eq: functions which give measures that are scaled by flow of nabla s squared on the sphere}
    f(r, \theta) = C(\theta) (2 \snk(r/2))^{\beta - n} \csk(r/2)^{-(\beta + n)} = C(\theta) s^{\beta - n} \left(1 - \frac{k}{4} s^2 \right)^{- \frac{n + \beta}{2}},
\end{equation}
for all $(r, \theta) \in (0, \pi/\sqrt{k}) \times \S^{n - 1}$, and $C$ is a function on $\S^{n - 1}$.
\end{proof}

In particular, since $\|\nabla s\|^2 + ks^2/4 \equiv 1$ on $\S^n_k$ (cf.~the rigidity statement of \eqref{intro-eq: sharp gradient estimate} and {\cite[Theorem 1.1]{ManeaSGE}}), \eqref{intro-eq: sharp family of monotonicity formulae} from Theorem \ref{intro-th: sharp family of monotonicity formulae} is an equality when $\alpha = n - \beta$.
We also remark that the only function of the form from \eqref{eq: functions which give measures that are scaled by flow of nabla s squared on the sphere} which is smooth (or at least continuous) on all of $\S^n_k$ is necessarily the zero function if $\beta \geq 0$.

In general, if $(M, g)$ is a closed, connected, Einstein manifold of dimension $n \geq 3$, with sectional curvature bounded below by $k > 0$, if $G$ is Green's function for the operator  $-\Delta + n(n - 2)k/4$, with singularity at a fixed point $p \in M$, and if $s := G^{\frac{1}{2 - n}}$, Theorem \ref{intro-th: sharp family of monotonicity formulae} provides a two-parameter family of (sharp) monotonic quantities when weighting the measures from \eqref{eq: canonical radial measure is preserved by the flow on the sphere} and Lemma \ref{lemma: forms scaled by flow of nabla s squared on the sphere} by a positive power of $\|\nabla s\|^2 + ks^2/4$, which measures the deficit to $\S^n_k$.
Remark also the asymptotics of $s^2$ near its singularity (cf.~Corollary \ref{appendix-cllr: sharp c2 asymptotics for s near singularity}) imply that $s^2$ is a $C^2$ function near $p$ if we define $s(p) := 0$ (with $\nabla s^2|_p = 0$ and $\hess{s^2}|_p = 2g$), and thus the flow of $\nabla s^2$ exists for all time.

Let us now proceed with the proof of Theorem \ref{intro-th: sharp family of monotonicity formulae}; this will be a positive-curvature extension of the proof of {\cite[Theorem 1.4]{ParkHessianMonotonicity}}.

\begin{proof}[Proof of Theorem \ref{intro-th: sharp family of monotonicity formulae}]
Let $\beta > 0$ and $\alpha \geq n - \beta$, and let $U \subset M$ be a proper, precompact subset of $M$ with smooth boundary; denote by
\begin{equation} \label{eq: sharp montonicity formulae definition of function}
    f_{\alpha, \beta} := s^{-\alpha} \left(1 - \frac{k}{4} s^2 \right)^{\frac{\alpha - 2n}{2}} \left( \|\nabla s\|^2 + \frac{k}{4} s^2 \right)^{\frac{\beta}{2}}.
\end{equation}
Since $U$ is a domain with smooth boundary and $\varphi^t$ is the flow of $\nabla s^2$, we know that
\[
\frac{\dd}{\dd t} \int_{\varphi^t(U)} f_{\alpha, \beta} = \int_{\varphi^t(U)} \div(f_{\alpha, \beta} \nabla s^2),
\]
and so, by the product rule,
\begin{equation} \label{eq: sharp montonicity formulae part 1}
    \frac{\dd}{\dd t} \left( e^{-2 \beta t} \int_{\varphi^t(U)} f_{\alpha, \beta} \right) = e^{-2 \beta t} \int_{\varphi^t(U)} f_{\alpha, \beta} \left( \Delta s^2 + \langle \nabla \log f_{\alpha, \beta}, \nabla s^2 \rangle - 2 \beta \right).
\end{equation}
We now proceed by computing the right-hand side of \eqref{eq: sharp montonicity formulae part 1} explicitly in terms of $\hess{s^2}$ and $\|\nabla s\|^2$.

Since $d \|\nabla s\|^2 = 2 \hess{s}(\nabla s, \cdot)$, the product rule allows us to write
\allowdisplaybreaks
\begin{align*}
    d \left( \|\nabla s\|^2 + \frac{k}{4} s^2 \right) &= 2 \hess{s}(\nabla s, \cdot) + \frac{k}{2} s \, d s \\
    &= \frac{1}{s} \hess{s^2}(\nabla s, \cdot) - \frac{2\|\nabla s\|^2}{s} \, ds + \frac{k}{2} s \, ds \\
    &= \frac{1}{s} \left( \hess{s^2} + ks^2 g - 2g \right) (\nabla s, \cdot) + \frac{2}{s} \left(1 - \|\nabla s\|^2 - \frac{k}{4} s^2 \right) ds,
\end{align*}
and combining this identity with \eqref{eq: sharp montonicity formulae definition of function} and grouping common terms, we obtain that
\allowdisplaybreaks
\begin{align} \label{align: sharp montonicity formulae part 2}
    \langle \nabla \log f_{\alpha, \beta}, \nabla s^2 \rangle &= \left\langle - \frac{\alpha}{s} \nabla s - \frac{k(\alpha - 2n)}{4} \frac{s}{1 - ks^2/4} \nabla s + \frac{\beta}{2 \left( \|\nabla s\|^2 + \frac{k}{4} s^2 \right)} \nabla \left( \|\nabla s\|^2 + k s^2/4 \right), \nabla s^2 \right\rangle \nonumber \\
    &= - 2 (\alpha + \beta) \|\nabla s\|^2 - \frac{k(\alpha - 2n)}{2} \frac{s^2}{1 - ks^2/4} \|\nabla s\|^2 \nonumber \\
    &\quad \, + \frac{\beta}{\|\nabla s\|^2 + k s^2/4} \left( \hess{s^2} + ks^2 g - 2g \right) (\nabla s, \nabla s) + \frac{2 \beta}{\|\nabla s\|^2 + k s^2/4} \|\nabla s\|^2.
\end{align}
Using the assumption that $\alpha + \beta \geq n$, we may further rewrite \eqref{align: sharp montonicity formulae part 2} as
\allowdisplaybreaks
\begin{align*}
    \langle \nabla \log f_{\alpha, \beta}, \nabla s^2 \rangle &= - 2n \|\nabla s\|^2 + \frac{k(n + \beta)}{2} \frac{s^2}{1 - ks^2/4} \|\nabla s\|^2 + \frac{2 \beta}{\|\nabla s\|^2 + k s^2/4} \|\nabla s\|^2 \\
    &\quad \, + \frac{\beta}{\|\nabla s\|^2 + k s^2/4} \left( \hess{s^2} + ks^2 g - 2g \right) (\nabla s, \nabla s) \\
    &\quad \, - 2(\alpha + \beta - n) \|\nabla s\|^2 - \frac{k(\alpha + \beta - n)}{2} \frac{s^2}{1 - ks^2/4} \|\nabla s\|^2.
\end{align*}
We may now combine this preceding identity with the equation $\Delta s^2 = 2n \|\nabla s\|^2 - nk s^2/2$ (cf.~\eqref{eq: Delta s squared}) and also factor $\|\nabla s\|^2$ in the last two terms on its right-hand side to obtain that
\begin{equation} \label{eq: sharp montonicity formulae part 3}
    \begin{split}
        \Delta s^2 + \langle \nabla \log f_{\alpha, \beta}, \nabla s^2 \rangle - 2 \beta &= - \frac{nk}{2} s^2 + \frac{k(n + \beta)}{2} \frac{s^2}{1 - ks^2/4} \|\nabla s\|^2 + \frac{2 \beta}{\|\nabla s\|^2 + k s^2/4} \|\nabla s\|^2 - 2\beta \\
        &\quad \, + \frac{\beta}{\|\nabla s\|^2 + k s^2/4} \left( \hess{s^2} + ks^2 g - 2g \right)(\nabla s, \nabla s) \\
        &\quad \, - 2(\alpha + \beta - n) \frac{\|\nabla s\|^2}{1 - ks^2/4}.
    \end{split}
\end{equation}
Finally, using that $\|\nabla s\|^2 = (\|\nabla s\|^2 + ks^2/4 - 1) + (1 - ks^2/4)$ for the first four terms on the right-hand side of the above equation, we may finally rewrite \eqref{eq: sharp montonicity formulae part 3} as
\begin{equation} \label{eq: sharp montonicity formulae part 4}
    \begin{split}
        \Delta s^2 + \langle \nabla \log f_{\alpha, \beta}, \nabla s^2 \rangle - 2 \beta &=\frac{k s^2}{2} \left( \frac{\beta}{\|\nabla s\|^2 + k s^2/4} + \frac{n + \beta}{1 - ks^2/4} \right) \left( \|\nabla s\|^2 + \frac{k}{4} s^2 - 1 \right) \\
        &\quad \, + \frac{\beta}{\|\nabla s\|^2 + k s^2/4} \left( \hess{s^2} + ks^2 g - 2g \right)(\nabla s, \nabla s) \\
        &\quad \, - 2 (\alpha + \beta - n) \frac{\|\nabla s\|^2}{1 - ks^2/4}.
    \end{split}
\end{equation}
The right-hand side of \eqref{eq: sharp montonicity formulae part 4} is non-positive by the sharp gradient estimate for $s$ (cf.~\eqref{intro-eq: sharp gradient estimate}), by Theorem \ref{intro-th: sharp hessian estimate}, and because $\alpha + \beta - n \geq 0$ and $s^2 \leq 4/k$ (by the diameter comparison theorem, cf.~{\cite[Proposition 2.2]{ManeaSGE}}).
Combining this with \eqref{eq: sharp montonicity formulae part 1}, the proof of the monotonicity statement of Theorem \ref{intro-th: sharp family of monotonicity formulae} is completed.

For the rigidity statement, suppose that the derivative of the quantity from \eqref{intro-eq: sharp family of monotonicity formulae} in Theorem \ref{intro-th: sharp family of monotonicity formulae} is zero at some $t \in \R$ and for some proper, precompact subset $U \subset M$ with smooth boundary.
This implies that the right-hand side of \eqref{eq: sharp montonicity formulae part 4} must be equal to zero almost everywhere on $\varphi^t(U)$.
This necessarily tells us that $\alpha = n - \beta$, that $\|\nabla s\|^2 + ks^2/4 = 1$, and that $\hess{s}^2 (\nabla s, \nabla s) + ks^2 \|\nabla s\|^2 = 2\|\nabla s\|^2$ almost everywhere on $\varphi^t(U)$.
The rigidity part of either Theorem \ref{intro-th: sharp hessian estimate} or {\cite[Theorem 1.1]{ManeaSGE}} then implies that $(M, g)$ must also be isometric to the round $n$-sphere of constant curvature equal to $k$.
\end{proof}

We continue with the proof of Theorem \ref{intro-th: monotonicity of r squared times V minus volume of unit ball}; recall the definition of the function $V$ from \eqref{intro-eq: definition of V function from SGE paper}:
\begin{equation} \label{eq: definition of V function}
    V(r) := r^{-n} \int_{s \leq r} \left( \|\nabla s\|^2 + \frac{k}{4} s^2 \right) \left( \|\nabla s\|^2 - \frac{k}{4} s^2 \right) + \frac{k}{4} \int_{s \leq r} \left( \|\nabla s\|^2 + \frac{k}{4} s^2 \right) G,
\end{equation}
defined for $r$ in the range of $s$.
As mentioned in Section \ref{sec: intro} (cf.~Paragraph \ref{intro-subsec: sharp monotonic quantities for the hessian}), $V$ is closely related to the sharp gradient for $s$ (cf.~\eqref{intro-eq: sharp gradient estimate}), and was studied by the first author in {\cite[Section 4]{ManeaSGE}}, where it was proved that $V \leq \omega_{n - 1}/n$, that $V$ is non-increasing, and that $V$ is part of another sharp monotonic quantity related to the sharp gradient estimate for $s$ on closed manifolds with positive Ricci curvature (cf.~{\cite[Theorem 4.11]{ManeaSGE}}).

The derivative of $V$ may be directly computed using the coarea formula at all regular values $r$ of $s$ (cf.~{\cite[Equation (4.22)]{ManeaSGE}}) -- which form a full measure subset of the range of $s$ by Sard's theorem -- and we have
\begin{equation} \label{eq: derivative of V function}
    V'(r) = r^{-n - 1} \left( -n \int_{s \leq r} \left( \|\nabla s\|^2 + \frac{k}{4} s^2 \right) \left( \|\nabla s\|^2 - \frac{k}{4} s^2 \right) + r \int_{s = r} \left( \|\nabla s\|^2 + \frac{k}{4} s^2 \right) \|\nabla s\| \right).
\end{equation}
With this, we may now directly prove \eqref{intro-eq: monotonicity of r squared times V minus volume of unit ball} and deduce its corresponding rigidity statement; the argument is a positive-curvature extension of the one from {\cite[Proof of Theorem 1.6]{ParkHessianMonotonicity}}.

\begin{proof}[Proof of Theorem \ref{intro-th: monotonicity of r squared times V minus volume of unit ball}]
Since the set of regular values of $s$ is an open and full measure subset of its range (by Sard's theorem and compactness of $M$) and $V$ is differentiable at all such values, it suffices to compute the desired derivative (cf.~\eqref{intro-eq: monotonicity of r squared times V minus volume of unit ball}) at all regular values $r$ of $s$, with its corresponding rigidity statement.
Thus, fix a regular value $r > 0$ of $s$.

On the one hand, integration by parts tells us that
\[
\int_{s \leq r} \left\langle \nabla \left( \|\nabla s\|^2 + \frac{k}{4} s^2 \right), \nabla s^2 \right\rangle = - \int_{s \leq r} \Delta s^2 \cdot \left( \|\nabla s\|^2 + \frac{k}{4} s^2 \right) + \int_{s = r} \left( \|\nabla s\|^2 + \frac{k}{4} s^2 \right) \left\langle \nabla s^2, \frac{\nabla s}{\|\nabla s\|} \right\rangle.
\]
Using that $\Delta s^2 = 2n(\|\nabla s\|^2 - ks^2/4)$ (cf.~\eqref{eq: Delta s squared}) and combining the resulting equation with \eqref{eq: derivative of V function}, it follows that
\begin{equation} \label{eq: derivative of V in terms of hessian quantity part 1}
    2 r^{n + 1} V'(r) = \int_{s \leq r} \left\langle \nabla \left( \|\nabla s\|^2 + \frac{k}{4} s^2 \right), \nabla s^2 \right\rangle.
\end{equation}

On the other hand, since $d \|\nabla s\|^2 = 2 \hess{s}(\nabla s, \cdot)$, the product rule allows us to compute
\allowdisplaybreaks
\begin{align*}
    \left\langle \nabla \left( \|\nabla s\|^2 + \frac{k}{4} s^2 \right), \nabla s^2 \right\rangle &= 4s \hess{s} (\nabla s, \nabla s) + k s^2 \|\nabla s\|^2 \\
    &= 2 \hess{s^2} (\nabla s, \nabla s) - 4 \|\nabla s\|^4 + ks^2 \|\nabla s\|^2 \\
    &= 2 (\hess{s}^2 + ks^2g - 2g)(\nabla s, \nabla s) + 4 \|\nabla s\|^2 \left(1 - \|\nabla s\|^2 - \frac{k}{4} s^2 \right),
\end{align*}
Combining this preceding equation with \eqref{eq: derivative of V in terms of hessian quantity part 1}, we obtain that
\begin{equation} \label{eq: derivative of V in terms of hessian quantity part 2}
    2 r^{n + 1} V'(r) = 2 \int_{s \leq r} (\hess{s}^2 + ks^2g - 2g) (\nabla s, \nabla s) + 4 \int_{s \leq r} \|\nabla s\|^2 \left(1 - \|\nabla s\|^2 - \frac{k}{4} s^2 \right).
\end{equation}

Now, {\cite[Corollary 4.9]{ManeaSGE}} tells us that
\[
r^{-n} \int_{s \leq r} \left( \|\nabla s\|^2 - \frac{k}{4} s^2 \right) + \frac{k}{4} \int_{s \leq r} G = \frac{\omega_{n - 1}}{n}.
\]
Subtracting $V(r)$ from this preceding identity (cf.~\eqref{eq: definition of V function}), we get that
\allowdisplaybreaks
\begin{align*}
    \frac{\omega_{n - 1}}{n} - V(r) &= r^{-n} \int_{s \leq r} \left(1 - \|\nabla s\|^2 - \frac{k}{4} s^2 \right) \left( \|\nabla s\|^2 - \frac{k}{4} s^2 \right) + \frac{k}{4} \int_{s \leq r} \left(1 - \|\nabla s\|^2 - \frac{k}{4} s^2 \right) G \\
    &= r^{-n} \int_{s \leq r} \left(1 - \|\nabla s\|^2 - \frac{k}{4} s^2 \right) \left( \|\nabla s\|^2 - \frac{k}{4} s^2 + \frac{k}{4} r^n G \right) \\
    &= r^{-n} \int_{s \leq r} \left(1 - \|\nabla s\|^2 - \frac{k}{4} s^2 \right) \left( \|\nabla s\|^2 - \frac{k}{4} s^2 \left(1 - \frac{r^n}{s^n} \right) \right),
\end{align*}
where the last equality follows because $G \equiv s^{2 - n}$, and thus
\begin{equation} \label{eq: derivative of V in terms of hessian quantity part 3}
    \begin{split}
        \frac{\omega_{n - 1}}{n} - V(r) &= r^{-n} \int_{s \leq r} \left(1 - \|\nabla s\|^2 - \frac{k}{4} s^2 \right) \|\nabla s\|^2 \\
        &\quad \, - \frac{k}{4} r^{-n} \int_{s \leq r}  s^2 \left( 1 - \frac{r^n}{s^n} \right) \left( 1 - \|\nabla s\|^2 - \frac{k}{4} s^2 \right).
    \end{split}
\end{equation}
The desired identity (cf.~\eqref{intro-eq: monotonicity of r squared times V minus volume of unit ball}) now follows by combining \eqref{eq: derivative of V in terms of hessian quantity part 3} with \eqref{eq: derivative of V in terms of hessian quantity part 2} and dividing the result by $2 r^{n - 1}$.

For the rigidity statement, suppose that the right-hand side of \eqref{intro-eq: monotonicity of r squared times V minus volume of unit ball} is zero for some regular value $r$ of $s$.
It then follows that $\hess{s}^2(\nabla s, \nabla s) + k s^2 \|\nabla s\|^2 = 2 \|\nabla s\|^2$ on $\{s \leq r\}$, and so the rigidity statement of Theorem \ref{intro-th: sharp hessian estimate} implies that $(M, g)$ must be isometric to the round $n$-sphere of constant curvature equal to $k$.
\end{proof}
\section{Applications} \label{sec: applications}

In this last section, we will present some applications of the sharp Hessian inequality from Theorem \ref{intro-th: sharp hessian estimate}.
As in the previous sections, suppose that $(M, g)$ is a closed, connected, Einstein manifold of dimension $n \geq 3$, with sectional curvature bounded below by $k > 0$, fix a point $p \in M$, let $G_p$ be Green's function for the operator $L := - \Delta + n(n - 2)k/4$, with singularity at $p$, and put $s_p := G_p^{1/(2 - n)}$.

Our first application is a sharp comparison inequality for $(M, g)$ using two distinct Green's functions; let us describe the situation in the model case when $(M, g) \simeq \S^n_k$ -- the round $n$-sphere of constant curvature equal to $k$.
As discussed in the previous sections, in this case, the operator $L$ is the conformal Laplacian for $\S^n_k$, and one can directly compute (cf.~{\cite[Lemma 2.1]{ManeaSGE}}) that $s_p = 2 \snk(r_p/2)$, where $r_p$ is the distance to $p$ in $\S^n_k$ and $\snk$ is the curvature-adapted sine function.
In particular, $\max_{\S^n_k} s_p^2 = 4/k$, and $s_p^2$ has a unique global maximum, the antipodal point of $p$, which we denote by $-p$.
If $r_{-p}$ denotes the distance to $-p$ in $\S^n_k$, then we also know that 
\begin{equation} \label{eq: sum of distances between antipodal points on the sphere}
    r_p + r_{-p} = \pi/\sqrt{k}.
\end{equation}
Since, by {\cite[Lemma 2.1]{ManeaSGE}}, $s_{-p} = 2 \snk(r_{-p}/2)$ (where $s_{-p} := G_{-p}^{1/(2 - n)}$ and $G_{-p}$ is Green's function for $L$ with singularity at $-p$), \eqref{eq: sum of distances between antipodal points on the sphere} tells us that $s_{-p} = 2 k^{-1/2} \csk(r_p/2)$, where $\csk := \snk'$ is the curvature-adapted cosine function.
Thus, on $\S^n_k$, we have that
\begin{equation} \label{eq: sum of antipodal points greens function on the sphere}
    s_p^2 + s_{-p}^2 \equiv \frac{4}{k}.
\end{equation}

For a general manifold $(M, g)$ as before, {\cite[Proposition 2.2]{ManeaSGE}} tells us that $s_p \leq 2 \snk(\dist(p, \cdot)/2)$, and so, in particular, by diameter comparison, we get that $s_p^2 \leq 4/k$.
Our first application is an inequality in the spirit of \eqref{eq: sum of antipodal points greens function on the sphere}, which allows us to compare $(M, g)$ to $\S^n_k$ using two distinct Green's functions instead of just one, whose singularities will act as ``comparison antipodal points''; similar such ideas have also been recently used in {\cite{ParkRigidityPositiveCurvature}} to prove a Weyl tensor rigidity statement for the round four-sphere.

In the notation from \eqref{eq: sum of antipodal points greens function on the sphere}, a special property that $\S^n_k$ possesses is that the (unique) maximum point of $s_{-p}$ is $p$.
In general, $s_p$ may have more than one maximum point, and if $q \in M \setminus \{p\}$ is such a point, it is not necessarily the case that $p$ is also a maximum point for $s_q$, where $s_q := G_q^{1/(2 - n)}$ and $G_q$ is Green's function for $L$ with singularity at $q$; however, two such points always exist by the compactness of $M$.
Our first application is a general inequality -- similar to \eqref{eq: sum of antipodal points greens function on the sphere} -- that accounts for this.

\begin{theorem} \label{th: pole-antipodal point inequality}
Let $(M, g)$ be a closed, connected, Einstein manifold of dimension $n \geq 3$, with sectional curvature bounded below by $k > 0$.
Fix a point $p \in M$, let $G_p$ be Green's function for the operator $L := - \Delta + n(n - 2)k/4$, with singularity at $p$, and put $s_p := G_p^{1/(2 - n)}$.
Denote by $m > 0$ the maximal value of $s_p$ on $M$, let $q \in M \setminus \{p\}$ be so that $s_p(q) = m$, and let $G_q$ be Green's function for $L$ with singularity at $q$.

Then, for $s_q := G_q^{1/(2 - n)}$, we have
\begin{equation} \label{eq: pole-antipodal point inequality}
    s_p^2 + \left( \frac{k m^2}{2} - 1 \right) s_q^2 \leq m^2,
\end{equation}
and equality holds at some \emph{point} in $M \setminus \{p,q\}$ if and only if $(M, g)$ is isometric to the round $n$-sphere of constant curvature equal to $k$.
\end{theorem}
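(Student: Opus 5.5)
Set $u := m^2 - s_p^2 - c\, s_q^2$ with $c := km^2/2 - 1$. The plan is to prove $u \geq 0$ on $M$, following the Obata-type strategy sketched in the introduction.

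The first step is to show $\hess{u} + ku\,g \geq 0$ on $M \setminus \{p,q\}$. Theorem \ref{intro-th: sharp hessian estimate}, applied to both $s_p$ and $s_q$, gives $\hess{s_p^2} + ks_p^2 g \leq 2g$ and $\hess{s_q^2} + ks_q^2 g \leq 2g$. Hence
\[
\hess{u} + ku\,g = km^2 g - (\hess{s_p^2} + ks_p^2 g) - c(\hess{s_q^2} + ks_q^2 g) \geq (km^2 - 2 - 2c)\,g = 0,
\]
provided $c \geq 0$, i.e.\ $m^2 \geq 2/k$. I will need to establish this separately. One route is to note that $\tr \mathcal{H}_p = 2n(\|\nabla s_p\|^2 + ks_p^2/4)$ and evaluate it at the maximum point $q$, where $\nabla s_p = 0$. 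Another is to use $\hess{s_p^2}|_q \leq 0$ at the maximum, which gives $ks_p(q)^2 g \geq \mathcal{H}_p|_q$; this still needs a lower bound on $\mathcal{H}_p$. The most plausible source is integration: $\int_M L(1)\,G_p = (n-2)\omega_{n-1}$ gives $\frac{n(n-2)k}{4}\int_M s_p^{2-n} = (n-2)\omega_{n-1}$, and bounding $s_p \leq m$ yields $\vol(M) \leq \frac{4\omega_{n-1}}{nk} m^{n-2}$. I would combine this with Bishop–Gromov comparison, and I expect this step to be delicate. If $c < 0$, the argument instead has to be rearranged so that the $s_q$ term carries the opposite sign.

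Near the singularities, the asymptotics give that $s_p^2$ and $s_q^2$ are $C^2$ with Hessian $2g$ at the poles, so the inequality $\hess{u} + ku\,g \geq 0$ extends across $p$ and $q$.

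The second step is the comparison along geodesics from $q$. We have $u(q) = m^2 - m^2 - 0 = 0$, and $\nabla u|_q = -\nabla s_p^2|_q - c\nabla s_q^2|_q = 0$, because $q$ is a critical point of $s_p$ and $\nabla s_q^2$ vanishes at its pole. Along any unit-speed geodesic $\gamma$ from $q$, the function $f := u\circ\gamma$ satisfies $f'' + kf \geq 0$ with $f(0) = f'(0) = 0$. A Sturm comparison against $\sin(\sqrt{k}t)$ gives $f \geq 0$ for $t \in [0, \pi/\sqrt{k}]$. Concretely, $(f'\sin - f\cos)' \geq 0$ shows that $(f/\sin)' \geq 0$ on $(0, \pi/\sqrt{k})$, and the limit of $f/\sin$ at $0$ is $0$. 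Since $\operatorname{diam}(M) \leq \pi/\sqrt{k}$ by Bonnet–Myers, every point lies on such a minimizing geodesic, so $u \geq 0$.

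The third step is rigidity. If $u(x) = 0$ at some $x \neq p,q$, the Sturm argument forces $f \equiv 0$ along the geodesic from $q$ to $x$, so every inequality used is an equality at interior points. In particular $\hess{s_p^2} + ks_p^2 g = 2g$ holds at some point of $M \setminus \{p\}$ when evaluated in the direction $\gamma'$. The equality case of Theorem \ref{intro-th: sharp hessian estimate} then gives that $(M, g)$ is isometric to the round sphere. The converse follows by checking directly on $\S^n_k$, where $q = -p$ and $m^2 = 4/k$, so $c = 1$ and \eqref{eq: sum of antipodal points greens function on the sphere} applies. The main obstacle is the sign of $c$ in the first step.
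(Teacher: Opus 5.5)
\paragraph{A gap in the case $c<0$.} Your second and third steps match the paper's argument: comparison along a minimizing geodesic from $q$ with $u(q)=0$ and $\nabla u|_q=0$, then rigidity from the equality case of Theorem~\ref{intro-th: sharp hessian estimate}. The paper writes the solution as $u(\gamma(\ell))=\int_0^\ell \snk(\ell-\tau)f(\tau)\,\dd\tau$ rather than using $(f/\sin)'\ge 0$, which also handles the endpoint $\ell=\pi/\sqrt{k}$ directly.

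The gap is in your first step. You leave the case $c=km^2/2-1<0$ open and propose to exclude it by proving $m^2\ge 2/k$. That inequality is false in general. Take $\mathbb{RP}^n$ with constant curvature $k$, which satisfies every hypothesis. Its Green's function lifts to $G_{\tilde p}+G_{-\tilde p}$ on $\S^n_k$, so
\[
G_p=\bigl(2\snk(r/2)\bigr)^{2-n}+\bigl(2k^{-1/2}\csk(r/2)\bigr)^{2-n},
\]
where $r$ is the distance to the lift $\tilde p$ on the sphere. The two bases squared sum to $4/k$, so convexity of $x\mapsto x^{(2-n)/2}$ gives $\min G_p=2(2/k)^{(2-n)/2}$. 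Hence
\[
m^2=\tfrac{2}{k}\,2^{-2/(n-2)}<\tfrac{2}{k}.
\]
None of your proposed routes can work. Integrating $L(1)$ against $G_p$ only yields $\vol(M)\le \frac{4\omega_{n-1}}{nk}m^{n-2}$, a volume-dependent lower bound on $m$. Bishop--Gromov bounds the volume from above, not from below, so it cannot close this.

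\paragraph{The fix.} The case $c<0$ is trivial and needs no rearranged argument. If $km^2<2$, then $s_p^2+c\,s_q^2\le s_p^2\le m^2$ by the definition of $m$. The inequality is in fact strict on $M\setminus\{q\}$, because $s_q>0$ there, so equality never occurs off $\{p,q\}$. This agrees with the rigidity statement, since on $\S^n_k$ one has $m^2=4/k$ and hence $c=1>0$. This is exactly how the paper disposes of the case. Once it is handled you may assume $c\ge 0$, and your argument then goes through as in the paper.
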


\begin{proof}
Recall that, since $L$ is a symmetric operator, the Green kernel of $L$ is also symmetric, and so $s_p(q) = s_q(p) = m$.
This immediately implies, in particular, that \eqref{eq: pole-antipodal point inequality} holds at the points $p$ and $q$ if we would define $s_p(p) := 0 =: s_q(q)$.

Let us now proceed with the proof of \eqref{eq: pole-antipodal point inequality} on $M \setminus \{p, q\}$.
We remark that we may assume, without loss of generality, that $m^2 \geq 2/k$.
Indeed, if $m^2 < 2/k$, then $(km^2/2) - 1 < 0$, which clearly implies that \eqref{eq: pole-antipodal point inequality} holds on $M \setminus \{p, q\}$ as $s_p^2 \leq m^2$ by definition of $m$.
Define the function $u := m^2 - s_p^2 - (km^2/2 - 1) s_q^2$; we need to show that $u \geq 0$ on $M \setminus \{p, q\}$, with equality holding at some point if and only if $(M, g)$ is isometric to the round $n$-sphere of curvature $k$.
Observe that, by the asymptotics of $s_p^2$ and $s_q^2$ near $p$ and $q$, respectively (cf.~Corollary \ref{appendix-cllr: sharp c2 asymptotics for s near singularity}), both $s_p^2$ and $s_q^2$ are $C^2$ on $M$ when extended as $s_p(p) := 0 =: s_q(q)$.
Moreover, a direct computation shows that
\allowdisplaybreaks
\begin{align} \label{align: convexity of pole-antipodal point function}
    \hess{u} + k u g &= - \hess{s_p^2} - \left( \frac{km^2}{2} - 1 \right) \hess{s_q^2} + k \left( m^2 - s_p^2 - \left( \frac{km^2}{2} - 1 \right) s_q^2 \right) g \nonumber \\
    &= - \left( \hess{s_p^2} + ks_p^2 g - 2g \right) - \left( \frac{km^2}{2} - 1 \right) \left( \hess{s_q^2} + ks_q^2 g - 2g \right),
\end{align}
and the right-hand side above is non-negative by Theorem \ref{intro-th: sharp hessian estimate} and because $km^2 - 2 \geq 0$.

We now proceed using a similar idea as in the proof of Obata's rigidity theorem for the sphere (cf.~{\cite{ObataSphericalRigidity}}): we will use \eqref{align: convexity of pole-antipodal point function} to solve the differential equation for $u$ along every minimizing geodesic starting at $q$.
Specifically, let $x \in M \setminus \{p, q\}$, and let $\gamma : [0, \ell] \to M$ be a unit-speed, minimizing geodesic segment from $q$ to $x$, where $\ell := \dist(q, x)$; by the diameter comparison theorem, $\ell \leq \pi/\sqrt{k}$.
Equation \eqref{align: convexity of pole-antipodal point function} tells us that
\begin{equation} \label{eq: ode for pole-antipodal point function}
    \frac{\dd^2}{\dd t^2} (u \circ \gamma) + k (u \circ \gamma) = f,
\end{equation}
where $f$ is the non-negative function defined as
\begin{equation} \label{eq: hessian of pole-antipodal point function}
    f(t) := - \left( \hess{s_p^2}(\gamma'(t), \gamma'(t)) + ks_p(\gamma(t))^2 - 2 \right) - \left( \frac{km^2}{2} - 1 \right) \left( \hess{s_q^2}(\gamma'(t), \gamma'(t)) + ks_q(\gamma(t))^2 - 2 \right).
\end{equation}
Since we also have that $u(\gamma(0)) = (u \circ \gamma)'(0) = 0$, we may solve the differential equation \eqref{eq: ode for pole-antipodal point function} with these initial conditions and conclude that $u(\gamma(t)) = \int_0^t \snk(t - \tau) f(\tau) \dd \tau$ for all $t \in [0, \ell]$.
Hence,
\begin{equation} \label{eq: expression of pole-antipodal map in terms of integral of its hessian}
    u(x) \equiv u(\gamma(\ell)) = \int_0^\ell \snk(\ell - \tau) f(\tau) \dd \tau,
\end{equation}
and the right-hand side above is non-negative because $f(\tau) \geq 0$ and $\snk(\ell - \tau) \geq 0$ for all $\tau \in [0, \ell]$.

For the rigidity statement, suppose that $u(x) = 0$ for some $x \in M \setminus \{p, q\}$.
Using \eqref{eq: expression of pole-antipodal map in terms of integral of its hessian} and recalling that $\snk > 0$ on $(0, \pi/\sqrt{k})$, it follows that $f \equiv 0$ on $(0, \ell)$ where $\ell := \dist(q, x)$ and $f$ is defined as in \eqref{eq: hessian of pole-antipodal point function}, and so, in particular, $\hess{s_p^2} (\gamma', \gamma') + k (s_p \circ \gamma)^2 - 2 = 0$ on $(0, \ell)$.
The rigidity statement of Theorem \ref{intro-th: sharp hessian estimate} then implies that $(M, g)$ is isometric to the round $n$-sphere of constant curvature equal to $k$.
\end{proof}

Using the compactness of $M$ and the symmetry of the Green kernel of $-\Delta + n(n - 2)k/4$, we may now use Theorem \ref{th: pole-antipodal point inequality} for two points $p$ and $q$ for which $p$ and $q$ are maximum points of $s_q$ and $s_p$ -- which act as ``comparison antipodal points'' -- to obtain a sharp inequality similar to \eqref{eq: sum of antipodal points greens function on the sphere}; this is the content of Theorem \ref{intro-th: pole-antipodal point pointwise inequality for greens functions}.

\begin{proof}[Proof of Theorem \ref{intro-th: pole-antipodal point pointwise inequality for greens functions}]
Let $\mathcal{G}$ be the Green kernel of the operator $L := -\Delta + n(n - 2)k/4$, so that for every $p \in M$, $G_p := \mathcal{G}(p, \cdot)$ is Green's function for $L$ with singularity at $p$; denote by $\mathcal{S} := \mathcal{G}^{1/(2 - n)}$.
Since $L$ is a symmetric and elliptic differential operator, we know that $\mathcal{G}$ is a symmetric and smooth function away from the diagonal of $M \times M$, and the asymptotics of $\mathcal{G}$ near the diagonal of $M \times M$ (cf.~{\cite[Section 4.2]{AubinNonlinearProblemsRG}} or {\cite[Theorem A.1]{ManeaSGE}}) tell us that $\mathcal{S}$ extends to a continuous function on $M \times M$ by setting $\mathcal{S}(p, p) := 0$ for all $p \in M$.

Now, by compactness of $M$ and continuity, the function $\mathcal{S}$ attains its maximum $m > 0$ at some point $(p, q) \in M \times M$, and we necessarily have that $p \neq q$ because $\mathcal{S}$ is positive away from the diagonal of $M \times M$.
In this case, since $s_p = \mathcal{S}(p, \cdot)$ and $s_q = \mathcal{S}(q, \cdot)$, it then follows that $s_p(q) = s_q(p) = m$ and that $m$ is the maximum of both $s_p$ and $s_q$.
Theorem \ref{th: pole-antipodal point inequality} then tells us that
\begin{equation} \label{eq: final equation for pole-antipodal point pointwise inequality}
    s_p^2 + \left( \frac{k m^2}{2} - 1 \right) s_q^2 \leq m^2 \quad \text{ and } \quad s_q^2 + \left( \frac{k m^2}{2} - 1 \right) s_p^2 \leq m^2.
\end{equation}
Summing up these two inequalities and grouping the common terms gives us the inequality $s_p^2 + s_q^2 \leq 4/k$.

For the rigidity statement, suppose that $s_p^2 + s_q^2 = 4/k$ at some point of $M \setminus \{p,q\}$.
It follows that both inequalities from \eqref{eq: final equation for pole-antipodal point pointwise inequality} are equalities at that said point, and thus $(M, g)$ must be isometric to the round $n$-sphere of constant curvature equal to $k$ by the rigidity part of Theorem \ref{th: pole-antipodal point inequality}.
\end{proof}

For the second application, we may suppress the dependence of the singularity in the notation for our Green's function, and denote by $s := G^{1/(2 - n)}$, where $G$ is Green's function for $-\Delta + n(n - 2)k/4$ with singularity at $p$.
As previously mentioned, {\cite[Theorem 1.1]{ManeaSGE}} (cf.~\eqref{intro-eq: sharp gradient estimate}) shows that $\|\nabla s\|^2 + ks^2/4 \leq 1$, with equality holding at \emph{some} point if and only if $(M, g)$ is isometric to the round $n$-sphere of curvature $k$.
The proof of this shows that if $\|\nabla s\|^2 + ks^2/4 = 1$ at some point of $M \setminus \{p\}$, then $\hess{s^2} = (2 - ks^2)g$ on $M \setminus \{p\}$; the general theory for the rigidity of warped products (cf.~{\cite[Section 1]{CheegerColdingWarpedProducts}} or {\cite[Corollary 4.3.4]{PetersenRG}}) tells us that this latter equation implies the desired rigidity.

By the theory for the rigidity of warped products from {\cite{CheegerColdingWarpedProducts}}, we also know that if $\hess{s^2} + ks^2 g - 2g$ is close to zero, then $(M, g)$ is Gromov-Hausdorff close to the round $n$-sphere of curvature $k$ (here, ``closeness'' for $\hess{s^2} + ks^2 g - 2g$ may be measured in various ways, such as pointwise or, as in {\cite{CheegerColdingWarpedProducts}}, in an averaged $L^2$ sense).
Recently (cf.~{\cite{ParkRigidityPositiveCurvature}}), the authors have shown that if the gradient deficit $\|\nabla s\|^2 + ks^2/4 - 1$ is close to $0$ in the sense that the averaged integral of $(1 - \|\nabla s\|^2 - ks^2/4) G$ over $M$ is close to zero, then $(M, g)$ is close to the round $n$-sphere of curvature $k$ in the Gromov-Hausdorff sense.
Our next proposition shows that, in our setting of Einstein manifolds with positive sectional curvature, from this rigidity point of view, the closeness of the gradient deficit to zero is equivalent to the ``Hessian deficit'' $\hess{s^2} + ks^2 g - 2g$ being close to zero.

\begin{proposition} \label{prop: hessian warped product quantity bounded by gradient quantity}
Let $(M, g)$ be a closed, connected, Einstein manifold of dimension $n \geq 3$, with sectional curvature bounded below by $k > 0$, and let $G$ be Green's function for the operator $-\Delta + n(n - 2)k/4$, with singularity at a fixed point $p \in M$; put $s := G^{\frac{1}{2 - n}}$.

Then, on $M \setminus \{p\}$, we have
\[
4n \left( \|\nabla s\|^2 + \frac{k}{4} s^2 - 1 \right)^2 \leq \|\!\hess{s^2} + ks^2g - 2g\|^2 \leq 4n^2 \left( \|\nabla s\|^2 + \frac{k}{4} s^2 - 1 \right)^2.
\]
\end{proposition}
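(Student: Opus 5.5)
Set $A := \hess{s^2} + ks^2 g - 2g$ on $M \setminus \{p\}$. The proposition is pure pointwise linear algebra once two facts about $A$ are in hand, both already established. First, Theorem \ref{intro-th: sharp hessian estimate} says exactly that $A \leq 0$ as a quadratic form. Second, by \eqref{eq: Delta s squared} (equivalently \eqref{eq: trace of H}) we have $\tr(\mathcal{H}) = 2n(\|\nabla s\|^2 + ks^2/4)$. Since $A = \mathcal{H} - 2g$, this gives
\[
\tr(A) = 2n \left( \|\nabla s\|^2 + \frac{k}{4} s^2 - 1 \right).
\]
So both bounds become statements about a symmetric, negative semi-definite matrix in terms of its trace.

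At a fixed point of $M \setminus \{p\}$, diagonalize $A$ in an orthonormal basis with eigenvalues $\mu_1, \ldots, \mu_n \leq 0$. Then $\|A\|^2 = \sum_i \mu_i^2$ and $\tr(A) = \sum_i \mu_i$.

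For the lower bound, apply Cauchy--Schwarz: $(\sum_i \mu_i)^2 \leq n \sum_i \mu_i^2$. This gives $\|A\|^2 \geq \tr(A)^2/n = 4n(\|\nabla s\|^2 + ks^2/4 - 1)^2$, and it needs no sign information.

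For the upper bound, use the nonpositivity of the eigenvalues. Since all $\mu_i$ have the same sign, every cross term $\mu_i\mu_j$ is nonnegative, so $\sum_i \mu_i^2 \leq (\sum_i \mu_i)^2$. Equivalently $\|A\| \leq -\tr(A)$, which is the linear algebra fact quoted in the introduction. This yields $\|A\|^2 \leq 4n^2(\|\nabla s\|^2 + ks^2/4 - 1)^2$.

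There is no real obstacle here: all the depth sits in Theorem \ref{intro-th: sharp hessian estimate}, which supplies $A \leq 0$. The only point to check is that the upper bound genuinely uses $A \leq 0$. Without it, terms of mixed sign in $\sum_i \mu_i$ could cancel, and the inequality would fail.
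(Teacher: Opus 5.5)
Your proof is correct and follows the paper's argument. The upper bound is exactly the paper's: Theorem~\ref{intro-th: sharp hessian estimate} together with $\|A\| \leq -\tr(A)$ for negative semi-definite $A$. Your Cauchy--Schwarz lower bound $\|A\|^2 \geq \tr(A)^2/n$ is equivalent to the paper's orthogonal splitting $A = \tracelessHess{s^2} + 2\left(\|\nabla s\|^2 + \frac{k}{4} s^2 - 1\right) g$. That splitting gives the identity $\|A\|^2 = \|\tracelessHess{s^2}\|^2 + 4n\left(\|\nabla s\|^2 + \frac{k}{4} s^2 - 1\right)^2$, so it also names the defect in the lower bound.
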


\begin{proof}
By Theorem \ref{intro-th: sharp hessian estimate}, we know that $\hess{s^2} + ks^2g - 2g \leq 0$ in the sense of quadratic forms on $M \setminus \{p\}$.
A simple linear algebra result then tells us that $\|\!\hess{s^2} + ks^2g - 2g\| \leq - \tr(\hess{s^2} + ks^2g - 2g)$.
By \eqref{eq: Delta s squared}, we may directly compute 
\[
\tr(\hess{s^2} + ks^2g - 2g) = \Delta s^2 + nks^2 - 2n = 2n \left(\|\nabla s\|^2 + \frac{k}{4} s^2 - 1 \right),
\]
and thus
\[
\|\!\hess{s^2} + ks^2g - 2g\|^2 \leq (\tr(\hess{s^2} + ks^2g - 2g))^2 = 4n^2 \left(\|\nabla s\|^2 + \frac{k}{4} s^2 - 1 \right)^2.
\]

For the other inequality, we may first compute
\[
\hess{s^2} + ks^2 g - 2g = \tracelessHess{s^2} + \left( \frac{\Delta s^2}{n} + ks^2 - 2 \right) g = \tracelessHess{s^2} + 2 \left( \|\nabla s\|^2 + \frac{k}{4} s^2 - 1 \right) g,
\]
where the last equality follows from \eqref{eq: Delta s squared}.
Taking norms in the above identity, it follows that
\[
4n \left( \|\nabla s\|^2 + \frac{k}{4} s^2 - 1 \right)^2 \leq 4n \left( \|\nabla s\|^2 + \frac{k}{4} s^2 - 1 \right)^2 + \|\!\tracelessHess{s^2}\|^2 = \|\!\hess{s^2} + ks^2g - 2g\|^2,
\]
which completes the proof of this proposition.
\end{proof}

\appendix

\section{Asymptotics of Green's functions near their singularity on closed manifolds} \label{appendix-sec: asymptotics of Greens function near the pole on closed manifolds}

In this appendix, we will discuss the $C^2$ asymptotics of Green's functions for the operator $-\Delta + n(n - 2)k/4$ near their singularity, where $k > 0$.
Our proof is a simple adaptation of the argument from {\cite[Proof of Theorem 2.1 in Appendix A]{HessianAsymptoticsGreensFunction}}, as our operator is not the Laplacian, but a Schr\"{o}dinger operator with constant potential.
In addition, since we are dealing only with the usual Laplacian (which corresponds to the $p = 2$ case in {\cite[Theorem 2.1]{HessianAsymptoticsGreensFunction}}) and since the $C^1$ asymptotics of Green's function have already been proved in {\cite[Theorem A.1]{ManeaSGE}}, the proof of the $C^2$ asymptotics becomes much more streamlined.

\begin{theorem} \label{appendix-th: hessian asymptotics of greens function in positive curvature}
Let $(M, g)$ be a closed, connected Riemannian manifold of dimension $n \geq 3$, let $p \in M$ be a fixed point, and let $k > 0$ be a constant.
Let $G$ be Green's function for the operator $-\Delta + n(n - 2)k/4$ with singularity at $p$, and put $r := \dist(p, \cdot)$.

Then, as $r \to 0$, we have that $G = r^{2 - n} + o_2(r^{2 - n})$, in the sense that
\begin{equation} \label{appendix-eq: sharp c1 asymptotics of G near the pole}
    G = r^{2 - n} + o(r^{2 - n}), \qquad \nabla G = (2 - n) r^{1 - n} \nabla r + o(r^{1 - n})
\end{equation}
and
\begin{equation} \label{appendix-eq: sharp hessian asymptotics of G near the pole}
    \hess{G} = \hess{r^{2 - n}} + o(r^{-n}) = (n - 2) r^{-n} (n \, dr \otimes dr - g) + o(r^{-n}).
\end{equation}
\end{theorem}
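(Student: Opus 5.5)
The $C^1$ statement \eqref{appendix-eq: sharp c1 asymptotics of G near the pole} is already {\cite[Theorem A.1]{ManeaSGE}}, so I will take it as given and only prove the Hessian asymptotics \eqref{appendix-eq: sharp hessian asymptotics of G near the pole}. The second equality there is a direct computation: $\hess{r^{2-n}} = (2-n)r^{1-n}\hess r + (2-n)(1-n) r^{-n} dr\otimes dr$. Near $p$ we have $\hess r = r^{-1}(g - dr\otimes dr) + O(r)$, and substituting gives $(n-2)r^{-n}(n\,dr\otimes dr - g) + O(r^{2-n})$.

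For the main claim I will use a blow-up (scaling) argument, following {\cite[Appendix A]{HessianAsymptoticsGreensFunction}}. Argue by contradiction: suppose there are points $x_i \to p$ with $r_i := r(x_i)\to 0$ and $r_i^{n}\,|\hess G - \hess r^{2-n}|(x_i) \geq \varepsilon > 0$. Work in normal coordinates at $p$ and rescale by $r_i$, setting $g_i(y) := r_i^{-2} g(\exp_p(r_i y))$ and $G_i(y) := r_i^{n-2} G(\exp_p(r_i y))$ on the annulus $A := \{1/2 \leq |y| \leq 2\}$. Then $g_i \to g_{\mathrm{Euc}}$ in $C^\infty(A)$, and $G_i$ solves $-\Delta_{g_i} G_i + r_i^2\, n(n-2)k/4\, G_i = 0$ on $A$. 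The potential term has coefficient $r_i^2 \to 0$; this is exactly where adapting the Laplacian case matters, and it is harmless. By the $C^0$ part of \eqref{appendix-eq: sharp c1 asymptotics of G near the pole}, $G_i \to |y|^{2-n}$ uniformly on $A$.

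Interior Schauder estimates for these uniformly elliptic equations, whose coefficients converge smoothly, bound $\|G_i\|_{C^{2,\alpha}}$ on a slightly smaller annulus. By Arzel\`a--Ascoli, $G_i \to |y|^{2-n}$ in $C^2$ there. Likewise $r_i^{n-2} r^{2-n}\circ\exp_p(r_i\,\cdot) = |y|^{2-n}$ exactly, and the scaled Hessians with respect to $g_i$ converge to Euclidean Hessians. Passing to a subsequence with $x_i$ corresponding to $y_i \to y_\infty$, $|y_\infty| = 1$, the scale-invariant quantity $r_i^n|\hess G - \hess r^{2-n}|(x_i)$ equals $|\hess_{g_i} G_i - \hess_{g_i}|y|^{2-n}|(y_i) \to 0$, which contradicts the choice of $\varepsilon$.

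The main obstacle is the bookkeeping in the rescaling: I must check that Hessians scale as $r_i^{-n}$ under this normalization, and that the Christoffel symbols of $g_i$ tend to zero, so that $\hess_{g_i}$ converges to the coordinate Hessian. Both follow from the smooth convergence of $g_i$ in normal coordinates. The rest is standard elliptic theory.
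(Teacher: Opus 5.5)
Your proposal is correct and follows essentially the same blow-up argument as the paper. In both, one rescales via $\exp_p(\lambda\,\cdot)$ so that the potential term acquires a factor $\lambda^2$, upgrades the $C^0$ convergence $G_\lambda \to |v|^{2-n}$ to $C^2$ convergence on an annulus by elliptic estimates, and evaluates the scale-invariant Hessian discrepancy on the unit sphere with $\lambda = r(x)$. The differences are only cosmetic: you argue by contradiction along a sequence and first compare with $\hess r^{2-n}$, checking its explicit form through the expansion of $\hess r$, whereas the paper argues directly and compares with the pulled-back model tensor $(n-2)\rho^{-n}(n\,d\rho\otimes d\rho - g_\lambda)$.
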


\begin{proof}
As mentioned before, \eqref{appendix-eq: sharp c1 asymptotics of G near the pole} is proved in {\cite[Theorem A.1]{ManeaSGE}}; thus, we only need to prove \eqref{appendix-eq: sharp hessian asymptotics of G near the pole}.
We proceed as in {\cite[Appendix A]{HessianAsymptoticsGreensFunction}} and define, for $\lambda > 0$, the map $T_\lambda : T_p M \to M$, $T_\lambda(v) := \exp_p(\lambda v)$ and the metric $g_\lambda := T_\lambda^*(\lambda^{-2} g)$ on $T_p M$.
We then know that $g_\lambda \to g|_p$ as $\lambda \downarrow 0$ in the smooth topology of $T_p M \setminus \{0\}$ (with respect to the $g|_p$ metric), as this is just the computation of the tangent cone of $M$ at $p$.
We also define the rescaled Green's function $G_\lambda$ on $T_p M \setminus \{0\}$ as $G_\lambda := \lambda^{n - 2} (G \circ T_\lambda)$.
A direct computation shows that $\Delta_{g_\lambda} G_\lambda = \lambda^n (\Delta_g G) \circ T_\lambda$, where $\Delta_{g_\lambda}$ is the Laplacian in the $g_\lambda$ metric on $T_p M$, and thus, on $T_p M \setminus \{0\}$, $G_\lambda$ satisfies the equation
\begin{equation} \label{appendix-eq: equation for rescaled greens function}
    \Delta_{g_\lambda} G_\lambda = \frac{n(n - 2)k}{4} \lambda^2 G_\lambda.
\end{equation}

Let $G_0$ be the minimal, positive Green's function for the Laplacian in the metric $g|_p$ on $T_p M$ with singularity at $0 \in T_p M$; this means that $G_0(v) = \|v\|^{2 - n}$ for $v \in T_p M \setminus \{0\}$.
Since $r(\exp(v)) = \|v\|$ for $v \in T_p M \setminus \{0\}$ with $\|v\|$ small enough, \eqref{appendix-eq: sharp c1 asymptotics of G near the pole} implies that $G_\lambda \to G_0$ locally uniformly on $T_p M \setminus \{0\}$ as $\lambda \to 0$.
We claim that $G_\lambda \to G_0$ in the $C^2$ topology of $T_p M \setminus \{0\}$ as $\lambda \to 0$ (with respect to $g|_p$).
Let us prove this claim.
By \eqref{appendix-eq: equation for rescaled greens function}, on $T_p M \setminus \{0\}$, 
\[
\left( \Delta_{g_\lambda} - \frac{n(n - 2)k}{4} \lambda^2 \right) \left( G_\lambda - G_0 \right) = - \left( \Delta_{g_\lambda} - \frac{n(n - 2)k}{4} \lambda^2 \right) G_0.
\]
As $g_\lambda \to g|_p$ and $n(n - 2)k \lambda^2/4 \to 0$ smoothly on $T_p M \setminus \{0\}$ as $\lambda \to 0$ (with respect to $g|_p$), the right-hand side above converges to zero as $\lambda \to 0$ in the smooth topology of $T_p M \setminus \{0\}$.
Elliptic regularity estimates then imply that $G_\lambda \to G_0$ in the $C^2$ topology of $T_p M \setminus \{0\}$ as $\lambda \to 0$ (with respect to $g|_p$), which finishes the proof of the claim.

With the above claim, we proceed as follows.
Denote by $\rho := \|\cdot\|$ in $T_p M$, so $G_0 = \rho^{2 - n}$.
The chain rule tells us that $\hess_{g|_p}{G_0} = (n - 2) \rho^{-n} (n \, d \rho \otimes d \rho - g|_p)$, where $\hess{g|_p}$ denotes the Hessian in the metric $g|_p$ on $T_p M$.
Since $G_\lambda \to G_0$ in the $C^2$ topology of $T_p M \setminus \{0\}$ and since $g_\lambda \to g|_p$ smoothly as $\lambda \to 0$ (with respect to $g|_p$), this implies that, on every compact subset $K \subset T_p M \setminus \{0\}$, we have
\begin{equation} \label{appendix-eq: hessian of rescaled greens function part 1}
    \hess_{g_\lambda} G_\lambda = (n - 2) \rho^{-n} (n \, d \rho \otimes d \rho - g_\lambda) + o(1)
\end{equation}
as $\lambda \to 0$, and the convergence is uniform on $K$ and with respect to the metric $g|_p$.
However, since $g_\lambda \to g|_p$ as $\lambda \to 0$, $g|_p$ and $g_\lambda$ define norms on $T_p M$ which are uniformly equivalent for $\lambda > 0$ small enough (\ie there is a constant $c > 0$ so that $c^{-1} g_\lambda \leq g|_p \leq c g_\lambda$ for all $\lambda > 0$ small enough); together with \eqref{appendix-eq: hessian of rescaled greens function part 1} this implies that,
\begin{equation} \label{appendix-eq: hessian of rescaled greens function part 2}
    \|\!\hess_{g_\lambda} G_\lambda - (n - 2) \rho^{-n} (n \, d \rho \otimes d \rho - g_\lambda) \|_{g_\lambda} \to 0
\end{equation}
as $\lambda \to 0$ uniformly on every compact subset of $T_p M \setminus \{0\}$.

Now, since $g_\lambda \equiv T_\lambda^*(\lambda^{-2} g)$, we may also explicitly compute that $\hess_{g_\lambda} G_\lambda = \lambda^{n - 2} T_\lambda^* (\hess_g G)$.
Similarly, $r \circ T_\lambda = \lambda \rho$ and $T_\lambda^* (dr) = \lambda \, d\rho$ near $p$, and so 
\begin{equation} \label{appendix-eq: hessian of rescaled greens function part 3}
    T_\lambda^* ((n - 2) r^{-n} (n \, dr \otimes dr - g)) = \lambda^{2 - n} (n - 2) \rho^{-n} (n \, d\rho \otimes d \rho - g_\lambda).
\end{equation}
Combining \eqref{appendix-eq: hessian of rescaled greens function part 1} with \eqref{appendix-eq: hessian of rescaled greens function part 2} and \eqref{appendix-eq: hessian of rescaled greens function part 3}, we obtain that, on every compact subset of $T_p M \setminus \{0\}$, we have that
\[
\lambda^{n - 2} \|T_\lambda^* (\hess{G} - (n - 2) r^{-n} (n \, dr \otimes dr - g))\|_{g_\lambda} \to 0
\]
uniformly as $\lambda \to 0$.
However, as $T_\lambda^*g = \lambda^2 g_\lambda$, the above may be rewritten as
\begin{equation} \label{appendix-eq: hessian of rescaled greens function final}
    \lambda^n \left( \|\!\hess{G} - (n - 2) r^{-n} (n \, dr \otimes dr - g)\|_g \circ T_\lambda \right) \to 0
\end{equation}
as $\lambda \to 0$ uniformly on every compact subset $K \subset T_p M \setminus \{0\}$. 

Let us now use \eqref{appendix-eq: hessian of rescaled greens function final} to complete the proof.
Observe that, if $x \in M \setminus \{p\}$ is close enough to $p$ so we may write $x = \exp_p(v)$ for a unique $v \in T_p M \setminus \{0\}$, then we also have that $\|v\| = \dist(p, x) \equiv r(x)$, and thus $x = \exp_p( r(x) \cdot v/r(x)) = T_{r(x)}(v/r(x))$.
It follows that for every $x \in M \setminus \{p\}$ close enough to $p$, we can write $x = T_{r(x)}(v_x)$ for some $v_x \in T_p M$ with $\|v_x\| = 1$.
Thus, taking the compact subset $K \subset T_p M \setminus \{0\}$ to be the unit sphere in the $g|_p$ metric in $T_p M$ and replacing $\lambda$ with $r(x)$, \eqref{appendix-eq: hessian of rescaled greens function final} allows us to conclude that
\[
r(x)^n \left( \|\!\hess{G}|_x - (n - 2) r(x)^{-n} (n \, dr|_x \otimes dr|_x - g|_x)\|_g\right) \to 0
\]
as $x \to p$, which is precisely \eqref{appendix-eq: sharp hessian asymptotics of G near the pole}.
\end{proof}

With the above theorem, we are also ready to state and prove the sharp $C^2$ asymptotics of the function $s^2 \equiv G^{2/(2 - n)}$ that will be needed in the proof of Theorem \ref{intro-th: sharp hessian estimate}.

\begin{corollary} \label{appendix-cllr: sharp c2 asymptotics for s near singularity}
In the setting and notation from Theorem \ref{appendix-th: hessian asymptotics of greens function in positive curvature}, if we denote by $s := G^{\frac{1}{2 - n}}$, then, near $p$, we have
\begin{equation} \label{appendix-eq: sharp c1 asymptotics of s near the pole}
    s = \dist(p, \cdot) + o(\dist(p, \cdot)), \qquad \nabla s = \nabla \dist(p, \cdot) + o(1),
\end{equation}
and
\begin{equation} \label{appendix-eq: sharp hessian asymptotics of s squared near the pole}
    \hess{s^2} = 2g + o(1).
\end{equation}
\end{corollary}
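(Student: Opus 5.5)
The corollary follows from Theorem \ref{appendix-th: hessian asymptotics of greens function in positive curvature} by the chain rule, writing $r := \dist(p, \cdot)$ throughout.

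\emph{Zeroth order.} Write $G = r^{2-n}(1 + \epsilon)$ with $\epsilon \to 0$. Then
\[
s = G^{1/(2-n)} = r(1+\epsilon)^{1/(2-n)} = r + o(r).
\]

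\emph{First order.} The chain rule gives
\[
\nabla s = \frac{1}{2-n} G^{\frac{1}{2-n} - 1} \nabla G = \frac{1}{2-n} G^{\frac{n-1}{2-n}} \nabla G.
\]
From the zeroth-order step, $G^{\frac{n-1}{2-n}} = r^{n-1}(1+o(1))$. From \eqref{appendix-eq: sharp c1 asymptotics of G near the pole}, $\nabla G = (2-n) r^{1-n}\nabla r + o(r^{1-n})$. Multiplying these gives $\nabla s = \nabla r + o(1)$, since $\|\nabla r\| = 1$ near $p$.

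\emph{Second order.} Rather than work with $s$, compute directly with $s^2 = G^{2/(2-n)}$:
\[
\hess{s^2} = \frac{2}{2-n} G^{\frac{n}{2-n}} \hess{G} + \frac{2}{2-n}\cdot\frac{n}{2-n}\, G^{\frac{2n-2}{2-n}}\, dG \otimes dG .
\]
Substitute the asymptotics term by term:
\begin{itemize}
\item $G^{\frac{n}{2-n}} = r^{n}(1+o(1))$;
\item $\hess{G} = (n-2) r^{-n}(n\, dr\otimes dr - g) + o(r^{-n})$;
\item $G^{\frac{2n-2}{2-n}} = r^{2n-2}(1+o(1))$;
\item $dG\otimes dG = (n-2)^2 r^{2-2n}\, dr\otimes dr + o(r^{2-2n})$.
\end{itemize}
The first product is $-2(n\, dr\otimes dr - g) + o(1)$. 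The second is $\frac{2n}{(n-2)^2}(n-2)^2\, dr\otimes dr + o(1) = 2n\, dr\otimes dr + o(1)$. Their sum is $2g + o(1)$, which is the claimed Hessian asymptotic.

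\emph{Bookkeeping.} The only point needing care is that each error term, multiplied by its prefactor, is genuinely $o(1)$. This holds because every $o(\cdot)$ in the theorem is relative to exactly the power of $r$ that the prefactor cancels. The tensors $dr\otimes dr$ and $g$ have bounded norm near $p$, because $r$ is smooth on a punctured normal neighbourhood with $\|\nabla r\| = 1$.

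The Hessian of $s$ itself blows up like $r^{-1}$, which is why the computation is done with $s^2$. Consequently $s^2$, extended by $s^2(p)=0$, is $C^1$ with $\nabla s^2 = 2s\nabla s = O(r)$. Its Hessian extends continuously to $2g$ at $p$, so $s^2$ is $C^2$ there, as the main text uses.
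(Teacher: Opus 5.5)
Your proposal is correct and follows the paper's proof essentially line for line: the same substitutions for $s$ and $\nabla s$, the same second-order chain-rule formula for $\hess{s^2}$ (your coefficient $\frac{2}{2-n}\cdot\frac{n}{2-n}$ equals the paper's $\frac{2n}{(n-2)^2}$), and the same two products $2g - 2n\,dr\otimes dr$ and $2n\,dr\otimes dr$ summing to $2g + o(1)$. Your closing remark that $s^2$ extends as a $C^2$ function with $\nabla s^2|_p = 0$ and $\hess{s^2}|_p = 2g$ is sound, and it is exactly how the paper uses this corollary in the main text.
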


\begin{proof}
Recall that $s$ is well-defined because $G > 0$ by the strong maximum principle.
The asymptotics of $s$ follow immediately from the chain rule and Theorem \ref{appendix-th: hessian asymptotics of greens function in positive curvature}.
Indeed, to see that \eqref{appendix-eq: sharp c1 asymptotics of s near the pole} holds, if we denote by $r := \dist(p, \cdot)$, then, by \eqref{appendix-eq: sharp c1 asymptotics of G near the pole}, we have
\[
s = G^{\frac{1}{2 - n}} = \left( r^{2 - n}(1 + o(1)) \right)^{\frac{1}{2 - n}} = r(1 + o(1)),
\]
and
\[
\nabla s = \frac{1}{2 - n} G^{\frac{n - 1}{2 - n}} \nabla G = \frac{1}{2 - n} \cdot r^{n - 1}(1 + o(1)) \cdot \left( (2 - n) r^{1 - n} \nabla r + o(r^{1 - n}) \right) = \nabla r + o(1).
\]
To prove \eqref{appendix-eq: sharp hessian asymptotics of s squared near the pole}, we may first compute, with the chain rule, that
\[
\hess{s^2} \equiv \hess{G^{\frac{2}{2 - n}}} = \frac{2}{2 - n} G^{\frac{n}{2 - n}} \hess{G} + \frac{2n}{(n - 2)^2} G^{\frac{2n - 2}{2 - n}} dG \otimes dG,
\]
and then use \eqref{appendix-eq: sharp c1 asymptotics of G near the pole} and \eqref{appendix-eq: sharp hessian asymptotics of G near the pole} to deduce that
\allowdisplaybreaks
\begin{align*}
    \frac{2}{2 - n} G^{\frac{n}{2 - n}} \hess{G} &= \frac{2}{2 - n} \cdot r^n(1 + o(1)) \cdot \left( (n - 2) r^{-n} (n dr \otimes dr - g) + o(r^{-n}) \right) \\
    &= 2g - 2n \, dr \otimes dr + o(1),
\end{align*}
and
\allowdisplaybreaks
\begin{align*}
    \frac{2n}{(n - 2)^2} G^{\frac{2n - 2}{2 - n}} dG \otimes dG &= \frac{2n}{(n - 2)^2} \cdot r^{2n - 2}(1 + o(1)) \cdot \left( (2 - n) r^{1 - n} dr + o(r^{1 - n}) \right) \otimes \left( (2 - n) r^{1 - n} dr + o(r^{1 - n}) \right) \\
    &= 2n \, dr \otimes dr + o(1),
\end{align*}
from which it immediately follows that $\hess{s^2} = 2g + o(1)$.
\end{proof}
\section{The Hessian inequality on non-compact manifolds} \label{appendix-sec: hessian inequalilty on noncompact manifolds}

In this section, we discuss the Hessian inequality for non-parabolic manifolds with non-negative sectional curvature and parallel Ricci curvature -- which is the setting similar to that from {\cite{ParkHessianEstimate}} -- and show that the sharp analogue does not hold in general.
We employ the notation from {\cite{ParkHessianEstimate}} (which is slightly different than the one from \eqref{intro-eq: hessian inequality for non-parabolic manifolds}).
Let $(M, g)$ be a complete, non-parabolic Riemannian manifold of dimension $n \geq 3$, with non-negative sectional curvature and parallel Ricci curvature; fix a point $p \in M$ and let $G$ be the minimal, positive Green's function for the Laplacian of $(M, g)$ with singularity at $p$, and put $b := G^{1/(2 - n)}$.
As mentioned at the end of Paragraph \ref{intro-subsec: sharp hessian inequality} in Section \ref{sec: intro} (cf.~\eqref{intro-eq: hessian inequality for non-parabolic manifolds}), {\cite{ParkHessianEstimate}} proved that $\hess{b^2} \leq Cg$ on $M \setminus \{p\}$ for a constant $C \geq 10$, provided the same inequality holds near $p$ and at infinity on $M$.

In the spirit of Theorem \ref{intro-th: sharp hessian estimate}, the sharp analogue in this setting would be the inequality 
\begin{equation} \label{appendix-eq: supposed sharp hessian inequality for non-parabolic manifolds}
    \hess{b^2} \leq 2g,
\end{equation}
with equality at some non-zero vector forcing $(M, g)$ to be isometric to $\R^n$; indeed, this would, for instance, trace to Colding's sharp gradient estimate -- $\|\nabla b\|^2 \leq 1$ (cf.~{\cite{ColdingNewMonotonicityFormulas}}),
However, we will see that this is not the case; in fact, the inequality $\hess{b^2} \leq 2g$ may fail arbitrarily close to the singularity $p$ (even though, by {\cite[Theorem 2.1]{HessianAsymptoticsGreensFunction}}, $\hess{b^2} = 2g + o(1)$ near $p$).

The idea behind this is to compute the second term in the asymptotic expansion of Green's function near its singularity (which will involve the Ricci and scalar curvatures) and to use the anisotropy of the Ricci tensor.
Since the de Rham decomposition theorem tells us that, locally, $(M, g)$ splits isometrically as a direct product between a Euclidean factor and other closed, Einstein manifolds, for flat factors with large enough dimension, the scalar curvature term will be arbitrarily small, and the defect between $\hess b^2$ and $2g$, when restricted to a totally geodesic surface, will essentially be equal to the Hessian of the function $-x^2 (x^2 + y^2)$ on $\R^2$. 
As the Hessian of this latter function has both a positive and a negative eigenvalue, one of its eigenvectors will be the direction in which $\hess{b^2}$ is greater than $2g$.

To make the above argument formal, we require the aforementioned sharper asymptotics of Green's functions near their singularities.
These were proved in {\cite[Appendix A]{LuescherGreensFunctionCurvatureAsymptotics}} for closed manifolds, but the argument also applies to non-compact manifolds as it is local; we present it in the form of the following theorem.

\begin{theorem} \label{appendix-th: curvature asymptotics of greens function near the pole}
Let $(M, g)$ be a complete, non-compact Riemannian manifold without boundary of dimension $n \geq 5$, which is also non-parabolic.
Fix a point $p \in M$ and let $G$ be the minimal, positive Green's function for the Laplacian of $(M, g)$ with singularity at $p$; denote by $r := \dist(p, \cdot)$.

Then, in normal coordinates $x = (x^1, \cdots, x^n)$ centred at $p$, we have that
\begin{equation} \label{appendix-eq: curvature asymptotics of greens function near the pole}
    G(x) = r^{2 - n} \left( 1 + \frac{1}{12} \ric_{ij}(p) x^i x^j + \frac{\scal(p)}{12(n - 4)} r^2 \right) + O_2(r^{5 - n}),
\end{equation}
as $x \to p$, where these asymptotics are understood in the same manner as in Theorem \ref{appendix-th: hessian asymptotics of greens function in positive curvature}.
\end{theorem}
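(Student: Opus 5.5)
The plan is a parametrix argument: build an explicit approximate Green's function $F$ with the claimed expansion, show that $\Delta F$ differs from the Dirac mass by an error that is only mildly singular, and then estimate $G - F$ by potential theory and rescaled Schauder estimates. Since the statement is local near $p$, non-compactness and non-parabolicity only enter through the existence of $G$ and the fact that $G - \chi F$ is smooth away from $p$ for a cutoff $\chi$. All computations take place in normal coordinates on a small ball $B_\delta(p)$.

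\textbf{Step 1: the parametrix.} Define
\[
F := r^{2-n}\Big(1 + \tfrac{1}{12}\ric_{ij}(p)x^ix^j + \tfrac{\scal(p)}{12(n-4)}r^2\Big).
\]
I will check that $-\Delta_g F = (n-2)\omega_{n-1}\delta_p + E$ with $E = O(r^{3-n})$, and that every coordinate derivative of $E$ gains the expected factor $r^{-1}$. I will use the standard expansions $g_{ij} = \delta_{ij} - \tfrac13 R_{ikjl}x^kx^l + O(r^3)$ and $\sqrt{\det g} = 1 - \tfrac16\ric_{kl}x^kx^l + O(r^3)$. On radial functions $\Delta_g = \partial_r^2 + \big(\tfrac{n-1}{r} + \partial_r\log\sqrt{\det g}\big)\partial_r$, hence
\[
\Delta_g r^{2-n} = \tfrac{n-2}{3}\, r^{-n}\ric_{ij}(p)x^ix^j + O(r^{3-n}).
\]
For the correction terms, replacing $\Delta_g$ by the flat Laplacian costs only $O(r^{3-n})$, because the metric error is $O(r^2)$ and the correction terms are $O(r^{4-n})$. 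Set $Q := \tfrac{1}{12}\ric_{ij}(p)x^ix^j$. Using $\Delta_0 r^{2-n} = 0$ away from $0$ and $\nabla r^{2-n}\cdot\nabla Q = 2(2-n)r^{-n}Q$, I expect
\[
\Delta_0(r^{2-n}Q) = \tfrac{2-n}{3}\, r^{-n}\ric_{ij}(p)x^ix^j + \tfrac{\scal(p)}{6}\, r^{2-n}.
\]
By $\Delta_0 r^{\alpha} = \alpha(\alpha+n-2)r^{\alpha-2}$ with $\alpha = 4-n$, I also expect
\[
\Delta_0\Big(\tfrac{\scal(p)}{12(n-4)}\,r^{4-n}\Big) = -\tfrac{\scal(p)}{6}\, r^{2-n}.
\]
These cancel the leading terms exactly. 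This is where $n \neq 4$ is used, and it explains the coefficients in \eqref{appendix-eq: curvature asymptotics of greens function near the pole}. The distributional part at $p$ is unchanged, since the corrections are $O(r^{4-n})$ with gradients $O(r^{3-n})$, both integrable against the sphere measure as $r \to 0$.

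\textbf{Step 2: the remainder.} Fix a cutoff $\chi$ supported in $B_\delta(p)$ with $\chi \equiv 1$ near $p$, and put $W := G - \chi F$. Then $\Delta_g W = f$ on $B_\delta(p)\setminus\{p\}$ with $f = O(r^{3-n})$, and $W$ is $O(r^{4-n})$, so the singularity at $p$ is removable in the distributional sense. Write $W = N f + h$, where $N$ is the Newtonian-type potential of $f$ on $B_\delta(p)$ for $\Delta_g$ and $h$ is $g$-harmonic, hence smooth and $O(1) = O(r^{5-n})$ since $n \geq 5$. The standard convolution bound $\int |y|^{3-n}|x-y|^{2-n}\,dy \lesssim |x|^{5-n}$ holds for $n > 5$.

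For $n = 5$ this bound only gives a logarithm. I would first try to remove the logarithm by adding the next homogeneous correction (cubic in $x$, involving $\nabla\ric(p)$) to $F$. That would improve the error to $f = O(r^{4-n})$ at the cost of a term of size $O(r^{5-n})$ in $F$ itself, which is absorbed into the remainder. If this is too messy, I would argue directly with barriers of the form $\pm C r^{5-n}$ for the operator $\Delta_g$ on small balls.

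\textbf{Step 3: derivative bounds.} To upgrade $W = O(r^{5-n})$ to $O_2(r^{5-n})$, I will rescale exactly as in the proof of Theorem \ref{appendix-th: hessian asymptotics of greens function in positive curvature}. On the annulus $\{\lambda/2 < r < 2\lambda\}$, consider $W_\lambda := \lambda^{n-5}\, W \circ T_\lambda$ with respect to the metric $g_\lambda$. It satisfies $\Delta_{g_\lambda} W_\lambda = \lambda^{n-3}\, f \circ T_\lambda = O(1)$, with uniform control of all derivatives, and it is uniformly bounded. Interior Schauder estimates then give uniform $C^2$ bounds, which rescale to $|\nabla W| = O(r^{4-n})$ and $|\hess W| = O(r^{3-n})$.

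The main obstacle is Step 2: the sharp potential estimate for the remainder, in particular the borderline case $n = 5$. The coefficient check in Step 1 is routine once the metric expansions are in place.
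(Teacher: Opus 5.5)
Your argument is correct, but it takes a genuinely different route from the paper.

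\textbf{The paper's route.} The paper never builds a parametrix. It imports the pointwise expansion from L\"{u}scher's appendix, where $G$ is assembled from heat-kernel coefficients $c_0 = 1 + \frac{1}{12}\ric_{ij}(p)x^ix^j + O(r^3)$ and $c_1 = \scal(p)/6 + O(r)$. The coefficient $\scal(p)/(12(n-4))$ then comes out of the identity $\Gamma(\frac{n}{2}-2) = 2\Gamma(\frac{n}{2}-1)/(n-4)$. The derivative bounds are obtained by appealing to the blow-up argument of Theorem~\ref{appendix-th: hessian asymptotics of greens function in positive curvature}.

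\textbf{Your route.} You verify the expansion by hand, and your computation of $\Delta_g F$ is right. The $r^{-n}\ric_{ij}(p)x^ix^j$ terms and the $\scal(p)\,r^{2-n}$ terms cancel exactly, which is where $n \neq 4$ enters. The remainder is then handled by a potential estimate and rescaled Schauder estimates.

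\textbf{What each buys.} Your approach is self-contained in several ways:
\begin{itemize}
\item Locality, and hence validity on a non-compact non-parabolic $M$, is manifest. The paper only asserts this by saying L\"{u}scher's computation is local.
\item The role of $n \geq 5$ is explicit: the smooth harmonic part is $O(1) = O(r^{5-n})$.
\item Your normalization $\lambda^{n-5}\, W \circ T_\lambda$ is exactly what is needed to get $O_2(r^{5-n})$ control of derivatives rather than merely $o$-control. The paper leaves this point implicit.
\end{itemize}
The paper's route buys brevity and needs no special treatment of the borderline dimension $n = 5$.

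\textbf{Two minor corrections to your Step 2.}

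First, when you invoke removability you do not yet know $W = O(r^{4-n})$. What you have is $W = o(r^{2-n})$ and the fact that $G$ and $F$ carry the same Dirac mass. That is all you need to write $W = Nf + h$ with $h$ smooth.

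Second, for $n = 5$ only your first option works. There $\pm C r^{5-n}$ is a constant, hence harmonic, so it cannot serve as a barrier against $f \sim r^{-2}$. The cubic correction does work, for the following reasons:
\begin{itemize}
\item The degree-$(3-n)$ part of the error comes only from the cubic ($\nabla \ric(p)$) term of $\log\sqrt{\det g}$ in $\Delta_g r^{2-n}$. The flat-versus-curved Laplacian error on your quadratic corrections is already $O(r^{4-n})$.
\item This part is therefore odd in $x$ and has zero spherical mean.
\item Hence you can solve for a degree-zero homogeneous correction $\Phi(x/r)$ using the spherical Laplacian.
\item The new error is $O(r^{-1})$, whose potential is bounded in dimension five.
\end{itemize}
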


Remark also that the expression $\ric_{ij}(p) x^i x^j$ is independent of the chosen normal coordinates centred at $p$.

\begin{proof}[Proof of Theorem \ref{appendix-th: curvature asymptotics of greens function near the pole}]
The pointwise asymptotics
\begin{equation} \label{appendix-eq: curvature pointwise asymptotics of greens function}
    G(x) = r^{2 - n} \left( 1 + \frac{1}{12} \ric_{ij}(p) x^i x^j + \frac{\scal(p)}{12(n - 4)} r^2 \right) + O(r^{5 - n})
\end{equation}
are proved in {\cite[Appendix A]{LuescherGreensFunctionCurvatureAsymptotics}} for Green's functions for Schr\"{o}dinger operators on closed manifolds, and the argument applies to our case as well as the computation is local and the proof allows for the Green's function of the Laplacian as well (defined modulo its kernel) (cf.~{\cite[Equation (A12)]{LuescherGreensFunctionCurvatureAsymptotics}}).
More precisely, with our normalization of Green's function (\ie $\Delta G = - (n - 2) \omega_{n - 1} \delta_p$), {\cite[Equations (A14) and (A15)]{LuescherGreensFunctionCurvatureAsymptotics}} imply that
\[
G(x) = (n - 2) \omega_{n - 1} \pi^{-n/2} r^{2 - n} \left( \frac{1}{4} \Gamma \left( \frac{n}{2} - 1 \right) c_0(x) + \frac{1}{16} \Gamma \left( \frac{n}{2} - 2 \right) c_1(x) \cdot r^2 \right) + O(r^{5 - n})
\]
where $c_0$ and $c_1$ are functions with asymptotics given by (cf.~{\cite[Equations (A24) and (A25)]{LuescherGreensFunctionCurvatureAsymptotics}})
\[
c_0(x) = 1 + \frac{1}{12} \ric_{ij}(p) x^i x^j + O(r^3), \qquad c_1(x) = \frac{\scal(p)}{6} + O(r).
\]
We then know that $\Gamma((n/2) - 1) = ((n - 2) \omega_{n - 1})^{-1} 4 \pi^{n/2}$, and since $n \geq 5$, we also have that $\Gamma((n/2) - 2) = 2 \Gamma((n/2) - 1)/(n - 4)$; these identities then immediately imply \eqref{appendix-eq: curvature pointwise asymptotics of greens function}.
The proof of the corresponding asymptotics for the gradient and Hessian of $G$ follows from the same blow-up argument as in the proof of Theorem \ref{appendix-th: hessian asymptotics of greens function in positive curvature} (which, in this case, is precisely the argument from {\cite[Proof of Theorem 2.1 in Appendix A]{HessianAsymptoticsGreensFunction}}).
\end{proof}

We are now ready to formalize the argument for the failure of the Hessian inequality \eqref{appendix-eq: supposed sharp hessian inequality for non-parabolic manifolds}.

\begin{theorem} \label{appendix-th: counter examples for non-compact case}
There exists a constant $\delta > 0$ so that the following holds.
Let $(M, g)$ be a complete, non-parabolic Riemannian manifold of dimension $n \geq 5$, with non-negative sectional curvature and parallel Ricci tensor.
Fix a point $p \in M$ and let $G$ be the minimal, positive Green's function for the Laplacian of $(M, g)$ with singularity at $p$; put $b = G^{1/(2 - n)}$.

If, for some unit vector $v \in T_p M$, we have that $\delta (n - 4) \ric (v, v) > \scal(p)$, then every neighbourhood of $p$ contains points $q$ and unit vectors $w \in T_q M$ for which $\hess{b^2} (w, w) > 2$. 
\end{theorem}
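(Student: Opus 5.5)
The plan is a second-order Taylor expansion of $\hess{b^2}$ near $p$, using Theorem \ref{appendix-th: curvature asymptotics of greens function near the pole}. The goal is a quadratic form $\mathcal{D}_{x_0}$ on $T_pM$, depending on a point $x_0 \in T_pM$, such that
\[
\hess{b^2}\big|_{\exp_p(t x_0)} - 2g = t^2\, \mathcal{D}_{x_0} + o(t^2)
\]
as $t \downarrow 0$, after identifying tangent spaces via normal coordinates. It then suffices to find $x_0$ and $w$ with $\mathcal{D}_{x_0}(w,w) > 0$. Taking $t$ small gives points $q = \exp_p(tx_0)$ in any neighbourhood of $p$ with $\hess{b^2}(w,w) > 2$ after normalizing $w$, since $g = \delta + O(t^2)$.

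\emph{Step 1 (expansion of $b^2$).} Write $G = r^{2-n}(1 + Q) + O_2(r^{5-n})$, where
\[
Q(x) = \tfrac{1}{12}\ric_{ij}(p)x^ix^j + c\,r^2, \qquad c := \frac{\scal(p)}{12(n-4)}.
\]
Then $b^2 = G^{2/(2-n)}$ gives
\[
b^2 = r^2 - \tfrac{2}{n-2}\, r^2 Q + O_2(r^5),
\]
so the error contributes $O(r^3)$ to the Hessian and is negligible. In normal coordinates $r^2 = |x|^2$, and the standard expansions $g_{ij} = \delta_{ij} - \tfrac13 R_{ikjl}x^kx^l + O(r^3)$ and $\Gamma = O(r)$ give
\[
\hess{r^2} - 2g = -\tfrac{2}{3} Rm(\cdot, x, x, \cdot) + O(r^3).
\]
As a check, its trace reproduces $\Delta r^2 = 2n - \tfrac23\ric(x,x) + O(r^3)$. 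Since $r^2Q$ is quartic, its Riemannian Hessian equals its flat coordinate Hessian up to $O(r^3)$. Hence
\[
\mathcal{D}_{x_0} = -\tfrac{2}{3}Rm(\cdot, x_0, x_0, \cdot) - \tfrac{2}{n-2}\,\partial^2 f(x_0), \qquad f(x) := |x|^2\Big(\tfrac1{12}\ric(x,x) + c|x|^2\Big).
\]
The $C^2$ sense of the $O_2$ remainder in Theorem \ref{appendix-th: curvature asymptotics of greens function near the pole} is exactly what makes this step rigorous.

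\emph{Step 2 (choice of $x_0$ and $w$).} A short computation shows that radial directions do not work. At $x_0 = v$ one has $\mathcal{D}_v(v,v) = -\tfrac{2}{n-2}(\ric(v,v) + 12c) < 0$. Moreover, the trace of $\mathcal{D}_v$ on $v^\perp$ has $\ric(v,v)$-coefficient $-1$, so averaging over $v^\perp$ also fails. I will therefore follow the heuristic in the text: the positive direction must come from an off-diagonal term. Since $\ric$ is parallel, de Rham lets me take $v$ to be a unit $\ric$-eigenvector with eigenvalue $\rho_v = \ric(v,v)$. I then pick a second unit vector $u \perp v$, also a $\ric$-eigenvector, chosen in the factor where $\ric$ is smallest (ideally a flat factor), and set $x_0 = a v + b u$. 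On the plane $\mathrm{span}(v,u)$, up to the factor $-\tfrac{2}{n-2}$, the $f$-part is the Hessian of
\[
(\alpha, \beta) \mapsto (\alpha^2 + \beta^2)\Big(\tfrac{\rho_v}{12}\alpha^2 + \tfrac{\rho_u}{12}\beta^2 + c(\alpha^2 + \beta^2)\Big).
\]
When $\rho_u$ and $c$ are negligible compared with $\rho_v$, this is $\tfrac{\rho_v}{12}\alpha^2(\alpha^2+\beta^2)$. That function is the model $-x^2(x^2+y^2)$ from the heuristic up to sign and scale, and after the factor $-\tfrac{2}{n-2}$ its contribution is indefinite when $\beta^2 > 2\alpha^2$: its Hessian determinant is proportional to $24\alpha^4 - 12\alpha^2\beta^2$. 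The hypothesis $\delta(n-4)\ric(v,v) > \scal(p)$ gives $12c < \delta\rho_v$, so the isotropic $c$-term is only a $\delta$-perturbation. I will take $w$ to be the eigenvector of the resulting $2\times 2$ block with positive eigenvalue, e.g. $(a,b) = (1,2)$, and show that this eigenvalue is $\gtrsim \rho_v/(n-2)$ minus $O(\delta\rho_v)$ and minus the contributions of $\rho_u$ and the curvature term.

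\emph{Main obstacle.} The hard part is controlling the curvature term $-\tfrac23 Rm(w, x_0, x_0, w)$. It is $\le 0$ because $\sec \ge 0$, and for $w, x_0$ in $\mathrm{span}(v,u)$ it equals $-\tfrac23\sec(v,u)\,|x_0\wedge w|^2$. It is also of the same order $t^2$ as the gain, with coefficient $\tfrac23$ that is not small against $\tfrac{1}{n-2}$. I will first try to make it vanish. By the de Rham splitting (locally, as in the discussion of $\R^m \times N_1 \times \cdots$), $v$ and $u$ can be chosen in different factors, so that $Rm(v,u,u,v) = 0$ and the mixed terms vanish on that plane. The $\rho_u$ contribution then disappears if $u$ lies in a flat factor, or is absorbed if $\rho_u \le \rho_v$. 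If only one non-flat factor is present, I would instead pick $x_0$ and $w$ inside a single factor and absorb the curvature term using $\sec(v,u) \le \rho_v$. This may force the quantitative choice of $\delta$ and of the ratio $b/a$, and it is where I expect the argument to be most delicate. Finally, the constant $\delta$ is fixed universally, small enough that the $c$-perturbation cannot destroy the positive eigenvalue found above.
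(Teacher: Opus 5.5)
Your framework matches the paper's proof. You expand $b^2$ to quartic order via Theorem \ref{appendix-th: curvature asymptotics of greens function near the pole}, restrict to the plane spanned by a top Ricci eigenvector $v$ and a flat direction, and take the positive eigendirection of the resulting indefinite $2\times 2$ block. The paper removes $Rm(\cdot,x_0,x_0,\cdot)$ by noting that $\exp_p(\mathrm{span}(v,w))$ is a totally geodesic flat surface, which is your observation that mixed-factor curvature vanishes.

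The gap is in the step you call the main obstacle. You treat a flat factor as optional and leave open the case where none is available, and the fallbacks you propose for that case fail:
\begin{itemize}
\item If $x_0$ and $w$ lie inside one Einstein factor $N\ni v$, then $\ric(x,x)=\rho_v|x|^2$ on $T_pN$. So $f=(\tfrac{\rho_v}{12}+c)|x|^4$ there, and $\partial^2 f(x_0)(w,w)=(\tfrac{\rho_v}{12}+c)(4|x_0|^2|w|^2+8\langle x_0,w\rangle^2)>0$. Together with $Rm(w,x_0,x_0,w)\ge 0$, this makes $\mathcal{D}_{x_0}$ negative definite on $T_pN$, so there is no gain to absorb anything into.
\item Your claim that the $\rho_u$-term is ``absorbed if $\rho_u\le\rho_v$'' is also false. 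For $(\alpha^2+\beta^2)(A\alpha^2+B\beta^2)$ the Hessian determinant is $24A(A+B)\alpha^4+[144AB-12(A+B)^2]\alpha^2\beta^2+24B(A+B)\beta^4$. This is negative somewhere only when $B/A<17-12\sqrt2\approx 0.03$; for instance, if $\rho_u=\rho_v$ the planar quartic is radial and convex.
\end{itemize}

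The missing idea is that a flat factor always exists, so the obstacle disappears; the paper uses this implicitly when it writes the local splitting with a flat factor $F$.
\begin{itemize}
\item $M$ is non-parabolic, hence non-compact.
\item In the de Rham decomposition $\R^k\times N_1\times\cdots\times N_\ell$ of the universal cover, each irreducible $N_i$ has parallel Ricci tensor, hence is Einstein with constant $\rho_i\ge 0$.
\item $\rho_i=0$ would force $N_i$ flat, since $\sec\ge 0$ and the Ricci curvature is a sum of sectional curvatures. So $\rho_i>0$, and each $N_i$ is compact by Myers.
\item If $k=0$, the universal cover would be compact, hence so would $M$. Therefore $k\ge 1$.
\end{itemize}
Replace the given $v$ by a top Ricci eigenvector; this preserves the hypothesis, and that eigenvector lies in some $N_i$. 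Then choose $u$ in the flat factor. Both $Rm(\cdot,x_0,x_0,\cdot)$ and the $\rho_u$-term vanish identically on the plane, leaving exactly the paper's matrix.

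Separately, the $c$-perturbation carries the same factor $\tfrac{2}{n-2}$ as the gain, so it is $O(\delta\rho_v/(n-2))$, not $O(\delta\rho_v)$ as you wrote. This is precisely what lets $\delta$ be chosen independently of $n$.
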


\begin{proof}
The de Rham decomposition theorem implies that, near $p$, $(M, g)$ is isometric to (an open subset of) a direct product of the form $F \times N_1 \times \cdots \times N_\ell$ where $N_1, \cdots, N_\ell$ are closed, Einstein manifolds with positive scalar curvature and non-negative sectional curvature and $F$ is a non-compact, flat manifold.
By rescaling $g$, we may assume that the largest eigenvalue of $\ric|_p$ is equal to one. 
Let $v \in T_p M$ be a unit vector for which $\ric(v, v) = 1$; since $\ric|_p$ is diagonal with respect to the direct product decomposition, up to reordering the compact factors, we may assume, without loss of generality, that $v$ is a tangent vector to the $N_1$ factor.
Let $w \in T_p M$ be another unit vector which is tangent to the flat factor $F$ in this product decomposition, and consider the surface $\Sigma \subset M$ consisting of points of the form $\exp_p(tv + sw)$ for $t$ and $s$ close enough to zero.
We then know that this is a totally geodesic surface in $M$ for which $g|_\Sigma$ is flat.
    
Choose normal coordinates $(x^i)$ centred at $p$ on $M$, adapted to the local product structure of $(M, g)$, for which $\partial_1|_p = v$ and $\partial_2|_p = w$; denote by $t$ and $s$ the corresponding coordinates with respect to these vectors.
Then, $g|_\Sigma = dt^2 + ds^2$ and $r^2|_\Sigma = t^2 + s^2$, where $r$ denotes the distance to $p$ in $M$.
The asymptotics of $G$ near $p$ given by Theorem \ref{appendix-th: curvature asymptotics of greens function near the pole} imply that
\[
G|_\Sigma = r^{2 - n}|_\Sigma \left(1 + \frac{1}{12} \left(t^2 + \frac{\scal(p)}{n - 4} r^2|_\Sigma \right) + O_2 (r^3)\right).
\]
For simplicity, put $\alpha := \scal(p)/(n - 4)$.
As $b \equiv G^{1/(2 - n)}$, an argument analogous to that from Corollary \ref{appendix-cllr: sharp c2 asymptotics for s near singularity} gives us the asymptotics
\[
b^2|_\Sigma = r^2 \left(1 - \frac{1}{6(n - 2)} \left( t^2 + \alpha r^2 \right) \right) + O_2(r^5).
\]
Now, essentially, one expects that if $\alpha \ll 1$, then the difference between $b^2$ and $r^2$ is proportional to $t^2 r^2|_\Sigma$, which is neither convex nor concave, and this will tell us that $\hess{b^2}|_\Sigma$ has an eigenvalue strictly larger than two.

Together with the remark that $\Sigma$ is a totally geodesic surface in $M$ -- which gives us that $\hess{b^2}|_\Sigma = \hess^{\Sigma} {b^2}$, where $\hess^\Sigma$ denotes the Hessian along $\Sigma$ -- a direct computation implies that
\pagebreak
\allowdisplaybreaks
\begin{align} \label{appendix-alix: asymptotics of b squared near the pole along geodesic surface}
    \hess b^2|_{\Sigma} - 2 g|_\Sigma &= - \frac{1}{3(n - 2)}\begin{pmatrix}
        (1 + 2 \alpha) t^2 + 6 \alpha s^2 & (2 + 4 \alpha) t s \\
        (2 + 4 \alpha) t s & (6 + 6 \alpha) t^2 + (1 + 2 \alpha) s^2
    \end{pmatrix} + O(r^3) \nonumber \\
    &= - \frac{r^2}{3(n - 2)}\left( \frac{1}{r^2}\begin{pmatrix}
        t^2 & 2 s t \\
        2 s t & 6 t^2 + s^2
    \end{pmatrix} + O(\delta) + O(r) \right),
\end{align}
where the last equality follows by our assumption that $\alpha < \delta$.
Since
\begin{align*}
\tr\begin{pmatrix}
    t^2 & 2 t s \\
    2 t s & 6 t^2 + s^2
\end{pmatrix} = 7 t^2 + s^2 > 0, 
\quad
\det \begin{pmatrix}
    t^2 & 2 t s \\
    2 t s & 6 t^2 + s^2
\end{pmatrix}
= - 3 t^2 s^2 + 6 t^4 = 3 t^2 (3 t^2 - r^2)
\end{align*}
the above determinant is negative for $t \ll r$.
Thus if we fix a small positive ratio of $t$ to $r$, this matrix may be written as $r^2 A$, where $A$ is a fixed, universal matrix (independent of $M$ and $n$), which has a unit eigenvector $w$ with negative eigenvalue $-c$ for some $c > 0$.
It then follows that
\[
\hess b^2|_\Sigma (w, w) - 2 = \frac{r^2}{3 (n - 2)}(c + o_\delta (1) + O_{r} (r)).
\]
Hence, if $\delta > 0$ is universally sufficiently small and $r > 0$ is sufficiently small depending on the geometry of $M$, we obtain that $\hess b^2|_\Sigma (w,w) - 2 > 0$ which finishes the proof.
\end{proof}

As a corollary, we have the following general situation for when the Hessian inequality from \eqref{appendix-eq: supposed sharp hessian inequality for non-parabolic manifolds} fails for non-parabolic manifolds.

\begin{corollary} \label{appendix-cllr: failure of hessian inequality for products}
Let $N_1, \cdots, N_\ell$ be closed, Einstein manifolds with positive scalar curvature and non-negative sectional curvature.

Then, for $m \in \N$ large enough, the Hessian inequality \eqref{appendix-eq: supposed sharp hessian inequality for non-parabolic manifolds} fails for the manifold $\R^m \times N_1 \times \cdots \times N_\ell$.
\end{corollary}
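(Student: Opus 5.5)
The plan is to apply Theorem \ref{appendix-th: counter examples for non-compact case} to $M := \R^m \times N_1 \times \cdots \times N_\ell$ with the product metric. First we check the hypotheses. Each factor has non-negative sectional curvature, so the product does too. Each factor has parallel Ricci tensor: $\R^m$ is flat and the $N_i$ are Einstein. Hence the product has parallel Ricci tensor, namely $\ric = \ric_{N_1} \oplus \cdots \oplus \ric_{N_\ell}$ with zero on the Euclidean factor. Completeness is clear since all the $N_i$ are closed.

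Next, for non-parabolicity with $m \geq 3$, the product has volume growth of order $r^m$, because the compact factors contribute only a bounded volume. Since $\ric \geq 0$, Varopoulos' criterion $\int^\infty t/\vol(B_t)\,\dd t < \infty$ then gives non-parabolicity. One could alternatively use the explicit heat kernel of the product, which is the Euclidean heat kernel times the compact heat kernels, and check directly that it is integrable in time. The dimension is $n = m + \sum_i \dim N_i \geq 5$ once $m \geq 5$.

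The key step is the curvature condition. Let $\rho_i > 0$ denote the Einstein constant of $N_i$ and $d_i := \dim N_i$. The scalar curvature is constant, $\scal = \sum_i d_i \rho_i$, and is independent of $m$. Choose a unit vector $v$ tangent to $N_1$, so that $\ric(v,v) = \rho_1 > 0$. The condition of Theorem \ref{appendix-th: counter examples for non-compact case} reads
\[
\delta (n - 4) \rho_1 > \sum_i d_i \rho_i,
\]
with $n - 4 = m + \sum_i d_i - 4$. This holds for all $m$ large, since the left side grows linearly in $m$ while the right side is fixed, and $\delta$ is a universal constant.

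Theorem \ref{appendix-th: counter examples for non-compact case} then produces points $q$ near any $p$ and unit vectors $w$ with $\hess{b^2}(w,w) > 2$, so \eqref{appendix-eq: supposed sharp hessian inequality for non-parabolic manifolds} fails. The only mildly delicate step is justifying non-parabolicity, which I will handle by Varopoulos' criterion as above.
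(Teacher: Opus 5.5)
Your proposal is correct and follows the paper's proof. You apply Theorem~\ref{appendix-th: counter examples for non-compact case} and note that the Euclidean factor leaves $\scal$ unchanged while $n-4$ grows linearly in $m$, so $\delta(n-4)\ric(v,v) > \scal$ holds for $v$ tangent to $N_1$ once $m$ is large. You also check the standing hypotheses that the paper leaves implicit: non-negative sectional curvature, parallel Ricci tensor, completeness, $n \geq 5$, and non-parabolicity for $m \geq 3$ (via Varopoulos' criterion or the product heat kernel).
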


\begin{proof}
If $n$ denotes the dimension of $\R^m \times N_1 \times \cdots \times N_\ell$, then the hypothesis of Theorem \ref{appendix-th: counter examples for non-compact case}  -- that $\delta (n - 4) \ric(v, v) > \scal(p)$ for some unit vector $v \in T_p M$ -- is satisfied if $m$ is large enough, as taking a product with a large-dimensional Euclidean space does not change the scalar curvature.
The desired result is now implied by the aforementioned theorem.
\end{proof}

Whilst the sharp Hessian inequality \eqref{appendix-eq: supposed sharp hessian inequality for non-parabolic manifolds} fails in general, we can prove that it holds for all Einstein, non-parabolic manifolds in this setting, \ie for all flat manifolds.
To see this, let $(M, g)$ be a complete, non-parabolic, flat manifold of dimension $n \geq 3$.
We employ the same notation from \eqref{appendix-eq: supposed sharp hessian inequality for non-parabolic manifolds}, and denote by $\lambda : M \setminus \{p\} \to \R$ the largest eigenvalue of $\hess{b^2}$.

Observe that our computations from Section \ref{sec: sharp hessian inequality} apply verbatim to this case as well (as they amount to setting $k = 0$ everywhere throughout Section \ref{sec: sharp hessian inequality}), and we may thus obtain that $\Delta \lambda - 2n \langle \nabla \log b, \nabla \lambda \rangle > 0$ in the barrier sense at every maximum point of $\lambda$ where it is strictly greater than two.
Moreover, by Theorem \ref{appendix-th: curvature asymptotics of greens function near the pole}, we have that $\lambda \to 2$ at $p$.
To conclude \eqref{appendix-eq: supposed sharp hessian inequality for non-parabolic manifolds} (using a similar contradiction argument as for the proof of Theorem \ref{intro-th: sharp hessian estimate}), we would need to ensure that there exist points in $M \setminus \{p\}$ where $\lambda$ is maximal; this amounts to determining the asymptotics of $\lambda$ at infinity.
Since $(M, g)$ is flat, this means that it is isometric to a quotient of the form $\R^n/\Gamma$, where $\Gamma$ is a discrete subgroup of isometries of $\R^n$.
Using the structure theory of such groups (cf.~{\cite[Section 5.4]{RatcliffeHyp}}), one may show that, if $\Gamma$ is not trivial, then $\hess{b^2} \to 0$ at infinity on $M$, \ie that $\lambda \to 0$ at infinity.

\bibliographystyle{alpha}
\bibliography{bibliography}

\end{document}